\journalname{}
\newcommand{\R}{\mathbb{R}}
\newcommand{\N}{\mathbb{N}}
\newcommand{\E}{\mathbb{E}}
\newcommand{\dd}{\mathrm{d}}
\newtheorem{assumption}{Assumption}[section]
\newtheorem{lemma}{Lemma}[section]
\newtheorem{theorem}{Theorem}[section]
\newtheorem{corollary}{Corollary}[section]
\newtheorem{proposition}{Proposition}[section]
\newtheorem{remark}{Remark}[section]
\begin{document}

\begin{frontmatter}



\dochead{}

\title{Weak convergence of the backward Euler method for stochastic {C}ahn--{H}illiard equation with additive noise}


 \author[label1]{Meng Cai}\ead{mcai1993@126.com}
 \author[label1]{Siqing Gan \corref{cor1}}\ead{sqgan@csu.edu.cn}
 \author[label2]{Yaozhong Hu}\ead{yaozhong@ualberta.ca}
 \address[label1]{School of Mathematics and Statistics, HNP-LAMA,
Central South University, 410083, Hunan, China}
 \address[label2]{Department of Mathematical and Statistical Sciences,
University of Alberta, T6G 2G1, Edmonton, Canada}
\cortext[cor1]{Corresponding author}

\begin{abstract}
We prove a weak rate of convergence of a fully discrete scheme for
  stochastic Cahn--Hilliard equation with  additive noise,
  where the spectral Galerkin method is used in space and the backward Euler method is used in time.
  Compared with the Allen--Cahn type stochastic partial differential equation, the error analysis here is much more sophisticated due to the presence of the unbounded operator in front of the nonlinear term.
  To address such issues, a novel and direct approach has been exploited which does not rely on a Kolmogorov equation but on the integration by parts formula from Malliavin calculus.
   To the best of our knowledge, the rates of weak convergence
    are revealed in the stochastic Cahn--Hilliard equation setting for the first time.
\end{abstract}

\begin{keyword}
stochastic Cahn--Hilliard equation
 \sep
 weak convergence rate
 \sep backward Euler method

\MSC 60H35 \sep 60H15 \sep 65C30

\end{keyword}

\end{frontmatter}


\section{Introduction}	

During the last decades, there have been  overwhelming activities on the analysis of numerical stochastic partial differential equation (SPDE) under globally Lipschitz condition and a fast growing number of studies on Allen--Cahn type SPDE with non-globally Lipschitz coefficients.
However, numerical analysis of stochastic Cahn--Hilliard equation, which is another prominent SPDE model with non-globally Lipschitz coefficients, is at its beginning and is far from being well understood.
The Cahn--Hilliard equation is of fundamental importance in various applications to, such as, the complicated phase separation and coarsening phenomena in a melted alloy \cite{cahn1961on,cahn1971spinodal},
spinodal decomposition for binary mixture \cite{cahn1958free},
the diffusive process of populations and oil film spreading over a solid surface \cite{cho1997asympotic}.
Our motivating example arises from a simplified  mesoscopic physical model for phase separation.
The aim of this article is to investigate the weak convergence rate of a full discretization for stochastic Cahn--Hilliard equation driven by additive noise,
\begin{equation}\label{eq:CHC-abstract}
\left\{
    \begin{array}{lll}
    \dd X(t) + A( A X(t) + F ( X(t) ))
    \, \dd t
    =
    \dd W(t), \quad  t \in (0, T],
    \\
     X(0) = X_0.
    \end{array}\right.
\end{equation}

Let  $\mathbf{D}$ be a bounded connected open domain of $\R^d, d=1,2,3$ with smooth boundary and let  $H:= L^2 (  \mathbf{D}, \R )$ be the Hilbert space
with the  usual scalar product
$\langle \cdot ,  \cdot \rangle$ and norm $\|\cdot\|$.
The space $\dot{H} := \{ v \in H: \int_{\mathbf{D}} v \dd x =0\}$ is a subspace of $H$.
We make the following  assumptions.
\begin{assumption}\label{assum:linear-operator-A}
  $-A:\mathrm{dom}(A) \subseteq \dot{H} \to \dot{H}$  is the Neumann Laplacian defined by $-Au = \Delta u, u \in \mathrm{dom}(A)
  = \{ u \in H^2( \mathbf{D}) \cap \dot{H}: \frac {\partial u} {\partial n}=0 \,\, \mathrm{on} \,\, \partial \mathbf{D}\}$.
\end{assumption}

\begin{assumption}\label{assum:nonlinearity}
$F : L^6 (  \mathbf{D} , \mathbb{R} ) \rightarrow H$
is the Nemytskii operator given by
\begin{equation}
F (v)(x)
=f ( v(x))
=v^3(x)-v(x),
\quad
x \in  \mathbf{D}, v \in L^6(  \mathbf{D} , \mathbb{R}).
\end{equation}
\end{assumption}

\begin{assumption}\label{assum:noise-term}
The noise process $\{ W(t)\}_{t \in [0,T]}$ is an $\dot{H}$-valued $Q$-Wiener process with the covariance operator $Q$ satisfying
\begin{equation}\label{eq:ass-AQ-condition}
\big\| A^{\frac12}  Q^{\frac12}  \big\|_{\mathcal{L}_2} < \infty.
\end{equation}
\end{assumption}

\begin{assumption}\label{assum:intial-value-data}
The initial value $X_0$  is deterministic
and satisfies
\begin{equation}\label{eq:X0}
|X_0|_4 < \infty,
\end{equation}
where the norm $|\cdot|_4$ is defined in \eqref{eq:norm} below.
\end{assumption}

We point out that Assumption \ref{assum:noise-term} is the same as that in \cite{furihata2018strong,kovacs2011finite,qi2020error}.
The assumption on the initial datum can be relaxed,
but at the expense of having the constant $C$  depending on
$t^{-1}$, by exploiting the smoothing effect of the semigroup
$E(t), t \in [0, T ]$ and standard non--smooth data error estimates.

Based on the above assumptions and following the semigroup framework in \cite{prato2014stochastic},
we see that the model \eqref{eq:CHC-abstract} admits a unique mild solution
\begin{align*}\label{eq:mild-solution-introduction}
 X(t) =  E(t) X_0  -  \int_0^t E(t-s)
     A P F(X(s)) \,\mathrm{d} s
         +  \int_0^t E(t-s) \, \mathrm{d} W(s),
         \quad  t \in [0, T],
\end{align*}
where $E(t)$ denotes the analytic semigroup generated by $-A^2$.
We refer the readers  to
\cite{antonopoulou2016existence,cardon2001cahn,cui2019wellposedness,cui2020absolute,prato1996stochastic,elezovic1991on,qi2021existence}
 for  the existence and uniqueness of the mild solution for such equation.
Since the exact solutions are rarely known explicitly,
numerical simulations are often used to investigate the behavior of the solutions.
We choose the spatial semi-discretization by the spectral Galerkin method, i.e., projecting the equation to vector space $H_N$, spanned by the first $N$ eigenvectors of $A$.
The approximated equation of \eqref{eq:CHC-abstract} is in the form
\begin{equation*}
\dd X^N(t) + A (A X^N(t)  + P_N F( X^N(t) )) \dd t
=
P_N \dd W(t),\,
t \in (0,T];\, \,  X^N(0)=P_N X_0\,,
\end{equation*}
where $P_N$ is the spectral Galerkin projection operator onto the space $H_N$.
In the temporal direction, we apply the backward Euler method to the above equation.
The fully discrete scheme is then given by
\begin{equation*}
X_{t_m}^{M,N} - X_{t_{m-1}}^{M,N} + \tau A^2 X_{t_m}^{M,N}
   + \tau P_N A F(X_{t_m}^{M,N})  = P_N \Delta W_m, \quad
   m \in \{ 1,2,\cdots,M \}.
\end{equation*}
Here $\Delta W_m := W(t_m)-W(t_{m-1})$, $ \tau = \tfrac TM$ is the time stepsize and $t_m = m \tau$.
The main result, concerning the weak convergence rates of the full discretization, reads
\begin{equation}\label{intro:weak}
\big| \E [\Phi(X(T))] - \E [\Phi(X_T^{M,N})] \big|
 \leq C   \big(  \lambda_N^{-2} +
 \tau \big), \,
 \forall \, \Phi \in C_b^2(\dot{H},\R).
\end{equation}
Here and throughout this article, $C$ denotes a generic positive constant that is independent of the discretization parameters $M,N$ and may change from line to line and $C_b^2(\dot{H},\R)$ (or $C_b^2$) represents the space of not necessarily bounded mappings from $\dot{H}$ to $\R$ that have continuous and bounded Fr\'{e}chet derivatives up to order 2.
We split the weak error into two terms, both the spatial error and the temporal error, which are analyzed in Section 3 and Section 4, respectively.
The result given by the above inequality \eqref{intro:weak} is on the weak rate of convergence.
It is strictly greater than the strong ones (see Corollary \ref{coro:strong}) as expected.
It is seen that the weak rate (which is 1.0 in time) is not twice as the strong one, contrary to the common belief.
Indeed, the order is limited to $1$ since an implicit Euler scheme is used.

The idea for error analysis to obtain \eqref{intro:weak} goes as follows.
At first, the weak error is separated into two parts,
 the spatial error and the temporal error,
\begin{equation}
 \E\big[ \Phi(X(T))\big]
-\E \big[\Phi(X_T^{M,N})\big]
\\
=
\big(
\E\big[\Phi(X(T))\big]
-\E \big[\Phi(X^N(T))\big]
\big)
+
\big(
\E\big[\Phi(X^N(T))\big]
-\E \big[\Phi(X_T^{M,N})\big]
\big).
\end{equation}
To simplify the notation, we often write $\mathcal O_t$ for  $\int_0^t E(t-r)\dd W(r)$
and $\mathcal O^N_t = P_N \mathcal O_t$.
By introducing two processes
$\bar{X}(t):=X(t)-\mathcal{O}_t$
and
$\bar{X}^N(t):=X^N(t)-\mathcal{O}_t^N$,
we can further split  the spatial error as
\begin{equation} \label{eq:intro-spatial-error-decomposition}
\begin{split}
\E \big[  \Phi(X(T)) \big] - \E \big[ \Phi(X^N(T)) \big]
  & =  \big(\E\big[\Phi(\bar{X}(T)+\mathcal{O}_T)\big]
      -   \E \big[\Phi(\bar{X}^N(T)+\mathcal{O}_T)\big]\big) \\
  & \quad +   \big( \E \big[ \Phi(\bar{X}^N(T)+\mathcal{O}_T) \big]
      -\E \big[\Phi(\bar{X}^N(T)+\mathcal{O}_T^N)\big]\big).
\end{split}
\end{equation}
To proceed,
one relies on the Taylor expansion of the test function $\Phi$.
The key argument to estimate the first term on the right hand of
\eqref{eq:intro-spatial-error-decomposition} is to bound the  error between $\bar{X}^N(T)$ and $\bar{X}(T)$ by that in a strong sense,
\begin{align}
\begin{split}
&
\big|
\E\big[\Phi(\bar{X}(T)+\mathcal{O}_T)\big]
-\E \big[\Phi(\bar{X}^N(T)+\mathcal{O}_T)\big]
\big|
\\
& \quad \leq C  \Big|
\E \int_0^1 \Phi'  \big(  X(T)  +  \lambda
( \bar{X}^N(T) - \bar{X}(T) ) \big)
\big( \bar{X}^N(T) - \bar{X}(T) \big) \dd \lambda \Big|
\\&
\quad \leq
C\,
\| \bar{X}(T)-P_N \bar{X}(T)\|_{L^2(\Omega,\dot{H})}
+
C\,
\| P_N \bar{X}(T)- \bar{X}^N(T) \|_{L^2(\Omega,\dot{H})}
.
\end{split}
\end{align}
The error term $\| \bar{X}(T)-P_N \bar{X}(T)\|_{L^2(\Omega,\dot{H})}$ can be easily controlled owing to the higher spatial regularity of the stochastic process $\bar{X}(T)$,
in the absence  of the stochastic convolution.
The remaining term  $ e^N(t) := P_N \bar{X}(t) - \bar{X}^N(t)$,
satisfying the following random PDE,
\begin{equation}\label{eq:intro-randomPDE}
\tfrac{\dd}{\dd t}  e^N(t)
  + A^2  e^N(t) + P_N A \big[ F(X(t)) - F(X^N(t)) \big] =0,
  \quad  e^N(0)=0,
\end{equation}
must be carefully treated due to the presence of the unbounded operator $A$ before the nonlinear term $F$.
We use  the monotonicity of the nonlinearity of $F$ and the regularities of $X(T)$, $X^N(T)$ and $\mathcal{O}_t$ to  derive
$\Big\| \int_0^T |  e^N(t) |_1^2 \dd t \Big\|_{L^p(\Omega,\R)}
     \leq  C \, \lambda_N^{-4} $.
Then, combining it with the mild  solution of \eqref{eq:intro-randomPDE} leads to the desired weak orders (c.f. \eqref{eq:e(T)}-\eqref{eq:e(T)-order} below).
Subsequently, we turn our attention to the second term in \eqref{eq:intro-spatial-error-decomposition}.
Applying the Taylor expansion gives
\begin{align}
\begin{split}
& \big|
\E\big[\Phi(\bar{X}^N(T)+\mathcal{O}_T)\big]
-\E \big[\Phi(\bar{X}^N(T)+\mathcal{O}_T^N)\big]
\big|
\leq
\Big| \E \big[ \Phi^{'} (X^N(T))(\mathcal{O}_T-\mathcal{O}_T^N) \big] \Big|
\\ &  \quad +
\Big |  \E \Big [ \int_0^1 \Phi^{''}(X^N(T)+\lambda(\mathcal{O}_T-\mathcal{O}_T^N))
(\mathcal{O}_T-\mathcal{O}_T^N,\mathcal{O}_T-\mathcal{O}_T^N)
(1-\lambda)\dd \lambda \Big] \Big|.
\end{split}
\end{align}
The Malliavin integration by parts formula is the key ingredient to deal with the first term (c.f. \eqref{eq:space-rate-Malliavin}) and the second term can be easily estimated due to the boundedness of $\Phi''$.
 It is now easy to explain why the weak rate of convergence is expected to be higher than strong convergence rate.
As a byproduct of the weak error analysis, one can easily obtain the rate of the strong error,
\begin{equation}
\|X(t)-X^N(t)\|_{L^2(\Omega,\dot{H})}
\leq
\|\bar{X}(t)-\bar{X}^N(t)\|_{L^2(\Omega,\dot{H})}
+
\|\mathcal{O}_t-\mathcal{O}^N_t\|_{L^2(\Omega,\dot{H})}
\leq C \lambda_N^{-\frac 32},
\end{equation}
which is consistent with the results in \cite{cui2021strongCHC,qi2021existence} and is  lower than the weak convergence rate in \eqref{intro:weak}, due to the presence of the second error.
The basic idea to estimate temporal error  is the same as that of the spatial error by essentially exploiting the discrete analogue of  the arguments.
The main point is that error must be uniform on  the spatially discrete parameter $N$.

Having sketched the central ideas of the weak error analysis,
we  review some   relevant results in the literature.
For the linearized stochastic Cahn--Hilliard equations,  we refer to
\cite{chai2018conforming,kossioris2013finite,larsson2011finite}
for  some strong convergence results of the finite element method.
The authors in \cite{furihata2018strong,kovacs2011finite} studied the strong convergence of the fully discrete finite element approximation for Cahn--Hilliard--Cook equation under spatial regular noise, but with no rates obtained.
Later, the authors in \cite{qi2020error} derives  strong convergence rates of the mixed finite element method  by using a priori strong moment bounds of the numerical approximations.
For unbounded noise diffusion,
the existence and regularity of solution have been investigated in
\cite{antonopoulou2016existence,cui2019wellposedness} and the absolute continuity has been studied in
\cite{antonopoulou2018malliavin,cui2020absolute}.
Recently, the strong convergence rates  of the  spatial spectral Galerkin method and  the  temporal accelerated implicit Euler method for the stochastic Cahn--Hilliard equation were obtained in \cite{cui2021strongCHC}.
For weak convergence analysis in the non--globally Lipschitz setting,
we are only aware of the papers
\cite{brehier2019weak,cai2021weak,cui2019strong,cui2021weak} concerning the stochastic Allen--Cahn equation.
To the best of our knowledge, the weak convergence rates of a fully discrete method for the stochastic Cahn--Hilliard equation are absent in the literature.
It is worthwhile to  point out that issues from the presence of the unbounded operator  in front of the nonlinear term make the weak error  analysis  much more challenging.
To be more specific,  in addition to the aforementioned difficulty in the weak analysis,  the estimate of the Malliavin derivative for the spatial approximation  process is also completely different,
much  more efforts are needed  (c.f. Proposition
\ref{Estimate of Malliavin derivative of the solution}).
More recently, while this work was under review, we were aware of the preprint \cite{brehier2022weak} posted in arXiv, concerning with numerical approximations of similar SPDEs,
where Br\'{e}hier, Cui and Wang provide weak error estimates for another class of numerical schemes, whose  weak order is twice as the strong
order, for less regular problems.
It is worth mentioning that the approach in the two works are substantially different.
Different methods and different regularity regimes are dealt with.

The outline of the article is  as follows.
In the next section, we present some preliminaries, including  the  well-posedness and regularity of the mild solution and give a brief introduction to   Malliavin calculus.
Section \ref{sec:spatial-weak-analysis} is devoted to the weak analysis of the spectral Galerkin method in space and Section \ref{sec:full-weak-analysis} is concerned with the weak convergence rates of the backward Euler method in time.

\section{Preliminaries}\label{sec:preliminaries}

In this section, the mathematical setting, well-posedness and regularity of the model and a brief introduction to Malliavin calculus are given.

\subsection{Mathematical setting}

Given two  real separable Hilbert spaces
$(H, \langle \cdot, \cdot \rangle, \|\cdot\| )$
and
$(U, \langle \cdot, \cdot \rangle_U, \|\cdot\|_U )$,
$\mathcal{L}(U,H)$
stands for the space of all bounded linear operators from
$U$ to $H$  with the operator norm
$\| \cdot \|_{\mathcal{L}(U,H)}$
and
$\mathcal{L}_2(U,H) (\subset \mathcal{L}(U,H)$)
denotes  the space   of all Hilbert-Schmidt operators from $U$ to $H$.
For simplicity, we  write $\mathcal{L}(H) $ and $\mathcal{L}_2(H)$ (or $\mathcal{L}_2$ for short) instead of $\mathcal{L}(H,H)$ and $\mathcal{L}_2(H,H)$, respectively.
 It is known, see e.g., \cite{prato2014stochastic}, that $\mathcal{L}_2 (U,H)$ is a Hilbert space equipped with the inner product and norm,
\begin{align}
	\left< T_1 , T_2 \right>_{\mathcal{L}_2(U,H)}
	=
	\sum_{i\in\N^{+}} \left< T_1 \phi_i , T_2 \phi_i \right>,
	\;
	\| T \|_{\mathcal{L}_2(U,H)}
	=\Big(
	\sum_{i \in \N^{+}} \| T \phi_i \|^2
	\Big)^{\frac12},  \label{e.2.1}
\end{align}
where
$\{\phi_i\}$ is   an arbitrary  orthonormal basis of $U$.
Let   $H=L^2(\mathbf{D},\R)$ and
$\dot{H}=\{v \in H: \langle v,1 \rangle=0\}$.
$V:=C( \mathbf{D},\R)$ denotes
the Banach space of all continuous functions with
 supremum norm $\|\cdot\|_V$ and
$ L^r( \mathbf{D} , \mathbb{R}):=\{f:  \mathbf{D} \to \R,   \int_\mathbf{D} |f(x)|^r dx<\infty\}$.
We define
$P:H\rightarrow \dot{H}$
the generalized orthogonal projection by
$P v= v - |  \mathbf{D} |^{-1}
\int_{\mathbf{D}} v \dd x$, then
$(I-P)v= |  \mathbf{D} |^{-1}
\int_{\mathbf{D}} v \dd x$
is the average of $v$ over $\mathbf{D}$.


It is easy to check that $A$ is a positive definite, self-adjoint and unbounded linear operator on $\dot{H}$ with compact inverse.
For any $v \in H$, we define $A v=A P v$, then
there exists  a family of eigenpairs
$\{e_j , \lambda_j \}_{j \in \N }$
such that
\begin{align}\label{eq:eigenvalue-A-H}
	A e_j=\lambda_j e_j
	\quad \text{and} \quad
	0=
	\lambda_0
	<
	\lambda_1
	\leq
	\lambda_2
	\leq
	\cdots
	\leq
	\lambda_j
	\leq\cdots
	\quad \text{with} \quad
	\lambda_j \rightarrow \infty,
\end{align}
where
$e_0= |\mathbf{D} |^{-\frac 12}$
and
$\{ e_j, j=  1, \cdots \} $
forms an orthonormal basis of $\dot{H}$.
Straightforward applications of the spectral theory yield the fractional powers of $A$  on $\dot{H}$,
e.g.,
$A^\alpha v
=
\sum_{j=1}^\infty
\lambda_j^\alpha
\langle v,e_j \rangle e_j
$,
$\alpha \in \mathbb{R}$,
$v \in \dot{H}$.
The space $\dot{H}^\alpha = \mathrm{dom} (A^{\frac\alpha2}),
	\alpha \in \R$
is a Hilbert space with the inner product
$\langle \cdot, \cdot \rangle_{\alpha}$
and the associated norm
$|\cdot|_\alpha $
given by
\begin{equation}\label{eq:norm}
	\langle v , w \rangle_{\alpha}
	=
	\sum_{j=1}^{\infty} \lambda_j^{\alpha}
	\langle v , e_j \rangle
	\langle w , e_j \rangle,
	\quad
	|v|_{\alpha}= \| A^{\frac\alpha 2} v \|=
	\Big(
	\sum_{j=1}^{\infty} \lambda_j^{\alpha}
	|\langle v,e_j\rangle |^2
	\Big)^{\frac12}.
\end{equation}
We also define
$
	\|u\|_{\alpha}
	=
	\big(
	|u|_{\alpha}^2+|\langle u,e_0 \rangle|^2
	\big)^{\frac12}
$ for $u \in H$
and the corresponding space is
$H^{\alpha}:=
	\{u \in H:\|u\|_{\alpha} < \infty \}.$
A basic fact shows that for  $\alpha =1,2$,
the norm $|\cdot|_\alpha$ on $\dot{H}^\alpha$
is  equivalent to the standard Sobolev norm
$\| \cdot \|_{H^\alpha( \mathbf{D})}$
(see \cite[Theorems 2.9, 2.12]{kim2020fractional} and
\cite[Theorem 16.9]{yagi2010abstract}).
Since $H^2(\mathbf{D})$ is an algebra, there is a constant
$C >0$ such that, for any $f,g \in \dot{H}^2$,
\begin{equation}\label{eq:norm-algebra}
	\|fg\|_{H^2( \mathbf{D})}
            \leq C \|f\|_{H^2( \mathbf{D})}
            \|g\|_{H^2( \mathbf{D})}
	        \leq C |f|_{2} |g|_{2}.
\end{equation}
We recall that the operator $-A^2$   generates an analytic semigroup $E(t)=e^{-tA^2}$
on $H$ due to \eqref{eq:eigenvalue-A-H} and we have
\begin{align}\label{eq:definition-semigroup-E(t)}
	\begin{split}
		E(t)v=e^{-tA^2}v
		=Pe^{-t A^2}v
		+
		(I-P)v,
		\quad
		v \in H.
	\end{split}
\end{align}
With the aid of the eigenbasis of $A$ and Parseval's identity, we have
\begin{align}
	\| A^\mu E(t) \|_{\mathcal{L}(\dot{H})}
	&
	\leq
	C t^{-\frac\mu2} , \; t>0 , \; \mu \geq 0,
	\label{I-spatial-temporal-S(t)}
	\\
	\| A^{-\nu}(I-E(t)) \|_{\mathcal{L}(\dot{H})}
	&
	\leq
	C t^{\frac\nu2} , \quad t \geq 0, \; \nu \in[0,2],
	\label{II-spatial-temporal-S(t)}
	\\
	\int_{t_1}^{t_2} \| A^\varrho E(s) v \|^2 \, \dd s
	&
	\leq
	C |t_2-t_1|^{1-\varrho} \| v \|^2, \; \forall v \in \dot{H},
	\varrho \in [0,1],
	\label{III-spatial-temporal-S(t)}
	\\
	\Big\| A^{2\rho} \int_{t_1}^{t_2}
	E(t_2-\sigma)v \, \dd \sigma
	\Big\|
	&
	\leq
	C |t_2-t_1|^{1-\rho} \| v \|, \; \forall v \in \dot{H},
	\rho \in [0,1].
	\label{IV-spatial-temporal-S(t)}
\end{align}

By  Assumption \ref{assum:nonlinearity},
there exists a constant $C>0$ such that
\begin{align}
	- \langle F(u)-F(v) , u-v \rangle
	& \leq  \| u-v \|^2,
	\quad u  , v \in L^6(  \mathbf{D},\mathbb{R}),
	\label{eq:one-side-condition}
	\\  \| F (u) - F (v) \|  &\leq
	C ( 1 + \| u \|_V^2 +  \| v \|_V^2 ) \| u - v \|,
	\quad u, v \in V.
	\label{eq:local-condition}
\end{align}

\subsection{Well-posedness and regularity results of the model}

First at all, similar to \cite[(2.5) $\& \,\,(2.7)$]{cui2021strongCHC}, we give the following lemma concerning the spatio-temporal regularity result of stochastic convolution $\mathcal{O}_t := \int_0^t  E(t-s)  \dd W(s)$.

\begin{lemma}\label{lem:stochastic-convolution}
Suppose Assumptions \ref{assum:linear-operator-A} and \ref{assum:noise-term} hold.
Then for all $ p \ge 1$,
the stochastic convolution $\mathcal{O}_t$ satisfies
\begin{equation}\label{regularity:O-t}
\E \Big[ \sup_{t\in[0,T]} | \mathcal{O}_t |_{V}^p \Big]
 + \sup_{t\in[0,T]} \E \Big[ | \mathcal{O}_t |_{3}^p \Big] < \infty,
\end{equation}
and for $\alpha \in [0,3]$,
\begin{equation}
\| \mathcal{O}_t - \mathcal{O}_s \|_{L^p( \Omega , \dot{H}^{\alpha} )}
   \leq C |t-s|^{\text{min}\{ \frac 12,\frac{3-\alpha}{4} \}}.
\end{equation}
\end{lemma}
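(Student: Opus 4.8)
The plan is to estimate the stochastic convolution $\mathcal{O}_t = \int_0^t E(t-s)\,\dd W(s)$ by exploiting the smoothing properties of the semigroup $E(t)=e^{-tA^2}$ collected in \eqref{I-spatial-temporal-S(t)}--\eqref{IV-spatial-temporal-S(t)}, together with Assumption \ref{assum:noise-term}, which gives $\|A^{1/2}Q^{1/2}\|_{\mathcal{L}_2}<\infty$. The key quantitative input is the factorization $\|A^{\gamma}Q^{1/2}\|_{\mathcal{L}_2} \le \|A^{\gamma-1/2}\|_{\mathcal{L}(\dot H)}\,\|A^{1/2}Q^{1/2}\|_{\mathcal{L}_2}$, so that $A^{\gamma}Q^{1/2}$ is Hilbert--Schmidt for every $\gamma\le 1/2$, and more importantly that the weight $t^{-\mu/2}$ coming from $\|A^\mu E(t)\|_{\mathcal{L}(\dot H)}$ is integrable in a mean-square sense as long as the exponents stay below the critical threshold. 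I would first prove the second-moment bound $\sup_{t\in[0,T]}\E\big[|\mathcal{O}_t|_3^p\big]<\infty$; by the Gaussianity of $\mathcal{O}_t$ and Kahane--Khintchine (equivalence of $L^p$ and $L^2$ norms for Gaussians), it suffices to bound $\E\big[|\mathcal{O}_t|_3^2\big]$, which by the Itô isometry equals
\[
\int_0^t \big\| A^{3/2} E(t-s) Q^{1/2} \big\|_{\mathcal{L}_2}^2 \,\dd s
\le \|A^{1/2}Q^{1/2}\|_{\mathcal{L}_2}^2 \int_0^t \big\| A E(s) \big\|_{\mathcal{L}(\dot H)}^2 \,\dd s
\le C \int_0^t s^{-1}\,\dd s,
\]
which is \emph{not} quite integrable — so instead I would distribute the powers of $A$ more carefully, writing $A^{3/2}E(t-s)Q^{1/2} = A^{1}E(t-s)\cdot A^{1/2}Q^{1/2}$ and using $\int_0^t \|A E(s)\|^2\dd s$; the borderline logarithmic divergence is avoided because \eqref{III-spatial-temporal-S(t)} with $\varrho=1$ applied pointwise to basis vectors gives $\int_0^t \|A E(s)e_j\|^2\,\dd s \le C$ uniformly in $j$, and summing against the Hilbert--Schmidt weights of $A^{1/2}Q^{1/2}$ yields a finite bound uniform in $t\in[0,T]$. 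For the supremum-in-time bound on $|\mathcal{O}_t|_V$, the clean route is the factorization method of Da Prato--Kwapień--Zabczyk: write $\mathcal{O}_t = \frac{\sin(\pi\alpha)}{\pi}\int_0^t (t-s)^{\alpha-1} E(t-s) Y_s\,\dd s$ with $Y_s = \int_0^s (s-r)^{-\alpha} E(s-r)\,\dd W(r)$, choose $\alpha\in(0,1/2)$ appropriately, bound $\E\|Y_s\|_{\dot H^\beta}^p$ uniformly in $s$ by the Itô isometry exactly as above (the fractional weight $(s-r)^{-2\alpha}$ stays integrable against $\|A E(s-r)\|^2$ since $2\alpha<1$... in fact one needs $2\alpha + 1 < 2$, i.e. $\alpha<1/2$), then use the Sobolev embedding $\dot H^\beta \hookrightarrow V$ valid for $\beta > d/2$ in dimension $d\le 3$ (so $\beta\in(d/2, 3/2]$ works when $d=3$ marginally, and comfortably when $d\le 2$), together with the boundedness of the convolution operator $f\mapsto \int_0^t (t-s)^{\alpha-1}E(t-s)f(s)\,\dd s$ from $L^p((0,T);\dot H^\beta)$ into $C([0,T];\dot H^\beta)$ provided $\alpha > 1/p$.

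For the Hölder-in-time estimate in $\dot H^\alpha$, $\alpha\in[0,3]$, I would split $\mathcal{O}_t-\mathcal{O}_s$ (for $0\le s<t\le T$) into the two standard pieces,
\[
\mathcal{O}_t - \mathcal{O}_s
= \int_s^t E(t-r)\,\dd W(r) + \big(E(t-s)-I\big)\int_0^s E(s-r)\,\dd W(r)
=: I_1 + I_2,
\]
and bound each in $L^p(\Omega,\dot H^\alpha)$ using Gaussianity to reduce to second moments. For $I_1$, the Itô isometry gives $\E|I_1|_\alpha^2 = \int_s^t \|A^{\alpha/2}E(t-r)Q^{1/2}\|_{\mathcal{L}_2}^2\,\dd r \le \|A^{1/2}Q^{1/2}\|_{\mathcal{L}_2}^2 \int_0^{t-s}\|A^{(\alpha-1)/2}E(\sigma)\|_{\mathcal{L}(\dot H)}^2\,\dd\sigma$, and using \eqref{I-spatial-temporal-S(t)} this is $\le C\int_0^{t-s}\sigma^{-(\alpha-1)/2}\,\dd\sigma \le C|t-s|^{(3-\alpha)/2}$ when $\alpha<3$ (and when $\alpha\le 1$ the integrand is bounded, giving the exponent $1$, hence the $\min$); interpolating/combining with the trivial bound gives the stated $\min\{1/2,(3-\alpha)/4\}$ — more precisely one writes $\|A^{\alpha/2}E(t-r)Q^{1/2}\|_{\mathcal{L}_2} \le \|A^{\alpha/2-1/2-\delta}\|\,\|A^{1/2+\delta}(t-r)^{?}\cdots\|$ distributing a small amount $\delta$ so that the surviving power of $\sigma$ integrates to exponent $2\min\{1/2,(3-\alpha)/4\}$. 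For $I_2$, I would insert a fractional power: $(E(t-s)-I) = (E(t-s)-I)A^{-\theta}\cdot A^{\theta}$, apply \eqref{II-spatial-temporal-S(t)} to get $\|(E(t-s)-I)A^{-\theta}\|_{\mathcal{L}(\dot H)}\le C|t-s|^{\theta/2}$ for $\theta\in[0,2]$, and bound the remaining $\|A^{\theta}\int_0^s E(s-r)Q^{1/2}\,\dd W(r)\|_{L^p(\Omega,\dot H^\alpha)} = \|\mathcal{O}_s\|_{L^p(\Omega,\dot H^{\alpha+2\theta})}$, which is finite and uniform in $s$ by the same second-moment computation as above as long as $\alpha+2\theta \le 3$ (using $\|A^{1/2}Q^{1/2}\|_{\mathcal{L}_2}<\infty$ one can afford up to regularity index $3$). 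Optimizing $\theta$ subject to $\alpha+2\theta\le 3$, i.e. $\theta \le (3-\alpha)/2$, gives the Hölder exponent $\theta/2 = (3-\alpha)/4$, capped at $1$ (from $\theta\le 2$, active when $\alpha\le 1$), matching the claimed $\min\{1/2,(3-\alpha)/4\}$.

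The main obstacle is bookkeeping the borderline cases where the naive splitting of powers of $A$ produces a non-integrable $\sigma^{-1}$ singularity: the computation is only saved by the fact that the full square of the Hilbert--Schmidt norm, $\sum_j \lambda_j^{\alpha}|\langle E(t-r)Q^{1/2}e_j, e_k\rangle|^2$ etc., can be re-summed using \eqref{III-spatial-temporal-S(t)} (the space-time $L^2$ bound) rather than the pointwise-in-time operator-norm bound \eqref{I-spatial-temporal-S(t)}, which is exactly why Assumption \ref{assum:noise-term} is stated with $A^{1/2}Q^{1/2}$ (giving half a derivative of trace-class smoothing) rather than with $Q^{1/2}$ alone. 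A secondary technical point is checking that the Sobolev embedding $\dot H^\beta\hookrightarrow V$ has enough room in dimension $d=3$; here one uses $\beta\in(3/2,3]$ — the regularity budget of $3$ on $\dot H^\alpha$ is precisely what makes the $V$-norm bound in \eqref{regularity:O-t} accessible, and the factorization exponent $\alpha$ must be chosen in the nonempty window $(1/p, 1/2)$, which is fine since the claim is for every $p\ge 1$ and we may always take $p$ large enough that $1/p$ is small. Everything else — the reduction from $L^p$ to $L^2$ via Gaussianity, the Itô isometry, and the interpolation to produce the $\min$ — is routine.
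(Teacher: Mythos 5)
The paper itself offers no proof of this lemma: it is simply quoted as being ``similar to \cite[(2.5) \& (2.7)]{cui2021strongCHC}'' (see also \cite{qi2021existence}), so there is no in-paper argument to compare against, and your sketch supplies the standard proof that underlies that citation. Your three ingredients are exactly the right ones: (i) reduction to second moments by Gaussianity plus the It\^o isometry, with the borderline $\dot H^3$ case rescued by applying the space--time bound \eqref{III-spatial-temporal-S(t)} (with $\varrho=1$) to the vectors $A^{1/2}Q^{1/2}e_j$ instead of the pointwise operator bound \eqref{I-spatial-temporal-S(t)}, which is precisely why Assumption \ref{assum:noise-term} is phrased through $\|A^{1/2}Q^{1/2}\|_{\mathcal L_2}$; (ii) the factorization method together with $\dot H^{\beta}\hookrightarrow V$, $\beta>d/2$, for the pathwise supremum; (iii) the splitting $\mathcal O_t-\mathcal O_s=\int_s^t E(t-r)\,\dd W(r)+(E(t-s)-I)\mathcal O_s$ combined with \eqref{II-spatial-temporal-S(t)}, which yields the exponent $\min\{\tfrac12,\tfrac{3-\alpha}4\}$ with the cap $\tfrac12$ coming from the first piece. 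Two bookkeeping slips, neither fatal: in the factorization step the condition ``$\alpha<1/2$'' is too generous --- pairing $(s-r)^{-2\alpha}$ with $\|A^{\beta/2}E(s-r)Q^{1/2}\|_{\mathcal L_2}^2\le C\,(s-r)^{-(\beta-1)/2}$ and $\beta>3/2$ (required for the embedding into $V$ when $d=3$) forces $2\alpha+\tfrac{\beta-1}2<1$, i.e.\ $\alpha<\tfrac{3-\beta}4<\tfrac38$, so the admissible window is smaller but still nonempty, and the requirement $\alpha>1/p$ then means you prove the bound for large $p$ and recover small $p$ by Jensen, as you indicate; and your parenthetical range ``$\beta\in(d/2,3/2]$'' is empty for $d=3$ as written, though you correct it to $\beta\in(3/2,3]$ later. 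With those exponents repaired, the argument is complete and is, in substance, the proof the cited references give.
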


The following theorem states the well-posedness and spatio-temporal regularity of the mild solution for stochastic Cahn-Hilliard equation
\eqref{eq:CHC-abstract},  whose proofs can be found for example in \cite[Theorem 2.1 \& Theorem 2.2]{qi2020error}.
\begin{theorem}[Well-posedness and regularity of the mild solution]
	\label{thm:uniqueness-mild-solution}
	Under Assumptions \ref{assum:linear-operator-A}-\ref{assum:intial-value-data}, there is a unique mild solution of \eqref{eq:CHC-abstract} satisfying
	\begin{align}\label{eq:mild-solution}
		X(t)
		=
		E(t)X_0
		-
		\int_0^t \!\! E(t-s) A F(X(s))\,\mathrm{d} s
		+
		\int_0^t \!\! E(t-s)  \mathrm{d} W(s),
		\, t \in [0, T].
	\end{align}
	Furthermore, for  $p \geq 1$,
	\begin{align}\label{eq:spatial-regularity-mild-stoch}
		\sup_{t\in[0,T]}
		\|X(t)\|_{L^{p} ( \Omega, \dot{H}^3 ) }
		<
		\infty,
	\end{align}
	and for any $ \alpha \in [0,3]$,
	\begin{align}\label{eq:time-regularity-mild-stoch}
		\| X(t) - X(s) \|_{L^{p} ( \Omega, \dot{H}^{\alpha} ) }
		\leq
		C(t-s)^{\text{min}\{ \frac 12,\frac{3-\alpha}{4} \}},
\, 0 \leq s < t \leq T.
	\end{align}
\end{theorem}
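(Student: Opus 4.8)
The plan is to obtain existence and uniqueness through the classical splitting $X(t)=\bar X(t)+\mathcal{O}_t$ and to derive the regularity bounds \eqref{eq:spatial-regularity-mild-stoch}--\eqref{eq:time-regularity-mild-stoch} by a bootstrap on the mild formulation \eqref{eq:mild-solution}, relying on the smoothing properties \eqref{I-spatial-temporal-S(t)}--\eqref{IV-spatial-temporal-S(t)}, the algebra property \eqref{eq:norm-algebra}, the one-sided and local bounds \eqref{eq:one-side-condition}--\eqref{eq:local-condition}, and the regularity of the stochastic convolution from Lemma~\ref{lem:stochastic-convolution}.

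Writing $X=\bar X+\mathcal{O}$ turns \eqref{eq:CHC-abstract} into the pathwise random equation $\tfrac{\dd}{\dd t}\bar X+A^{2}\bar X+AF(\bar X+\mathcal{O})=0$, $\bar X(0)=X_0$, which for $\P$-a.e.\ $\omega$ has deterministic coefficients. First I would truncate the cubic nonlinearity at a level $R$ to make it globally Lipschitz and solve the truncated equation by a Banach fixed-point argument in $C([0,T],\dot{H})$, where the unbounded $A$ in front of $F$ is absorbed against the integrable singularity $(t-s)^{-3/4}$ coming from $\|A^{3/2}E(t-s)\|_{\mathcal{L}(\dot{H})}\leq C(t-s)^{-3/4}$. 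To remove the truncation and extract moment bounds I would exploit the Cahn--Hilliard energy structure: pairing the equation with $A^{-1}\bar X$ (i.e.\ working in the $\dot{H}^{-1}$ inner product) converts $A^{2}\bar X$ into the coercive term $|\bar X|_1^{2}$ and turns the nonlinear contribution into $\langle F(X),\bar X\rangle=\langle F(X),X\rangle-\langle F(X),\mathcal{O}\rangle$, which is bounded below using $\langle F(X),X\rangle=\|X\|_{L^4(\mathbf{D})}^4-\|X\|^2$, Young's inequality and \eqref{eq:one-side-condition}; Gronwall's lemma together with the pathwise bounds \eqref{regularity:O-t} then yields $L^{p}(\Omega)$ estimates on $\sup_t|\bar X(t)|_{-1}$, $\int_0^T|\bar X|_1^2\,\dd t$ and $\int_0^T\|X(s)\|_{L^4(\mathbf{D})}^4\,\dd s$. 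Testing further with $A\bar X$ (and invoking the embedding $\dot{H}^{2}\hookrightarrow V$, valid for $d\leq 3$) upgrades this to moment bounds in $L^{\infty}(0,T;\dot{H}^{1})\cap L^{2}(0,T;\dot{H}^{3})\cap L^{\infty}(0,T;V)$. Uniqueness follows by applying \eqref{eq:one-side-condition} and Gronwall to the $\dot{H}^{-1}$-difference of two solutions.

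For the spatial bound \eqref{eq:spatial-regularity-mild-stoch} I would bootstrap along $\dot{H}^{1}\to\dot{H}^{2}\to\dot{H}^{3}$ through \eqref{eq:mild-solution}. The term $E(t)X_0$ is bounded in $\dot{H}^{3}$ uniformly in $t$ since Assumption~\ref{assum:intial-value-data} puts $X_0\in\dot{H}^{4}\subset\dot{H}^{3}$, and $\mathcal{O}_t$ is bounded in $L^{p}(\Omega,\dot{H}^{3})$ uniformly in $t$ by Lemma~\ref{lem:stochastic-convolution}. For the deterministic convolution I would estimate $\|A^{3/2}E(t-s)AF(X(s))\|\leq\|A^{3/2}E(t-s)\|_{\mathcal{L}(\dot{H})}\,\|AF(X(s))\|\leq C(t-s)^{-3/4}\,|F(X(s))|_{2}$ and then control $|F(X(s))|_{2}\leq C(1+|X(s)|_{2}^{3})$ via the algebra estimate \eqref{eq:norm-algebra}; since $(t-s)^{-3/4}$ is integrable on $[0,t]$, Minkowski's inequality together with the $L^{3p}(\Omega,\dot{H}^{2})$-bound from the previous rung closes the estimate. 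The rung $\dot{H}^{1}\to\dot{H}^{2}$ is handled the same way, distributing the powers of $A$ between $E(t-s)$ and $F(X(s))$ so the residual singularity stays integrable and the nonlinearity is controlled through $|F(v)|_{1}\leq C(1+\|v\|_{V}^{2})|v|_{1}$ together with the energy and $V$-bounds from the first step; the intermediate regularity $\dot{H}^{2}$ is genuinely needed, because $F(X)\in\dot{H}^{1}$ alone does not supply enough smoothing to reach $\dot{H}^{3}$ directly.

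Finally, for \eqref{eq:time-regularity-mild-stoch} I would write, for $0\leq s<t\leq T$,
\[
X(t)-X(s)=(E(t-s)-I)X(s)-\int_s^t E(t-r)AF(X(r))\,\dd r+\int_s^t E(t-r)\,\dd W(r),
\]
and estimate the three pieces in $L^{p}(\Omega,\dot{H}^{\alpha})$. The first is bounded by $C(t-s)^{(3-\alpha)/4}\,|X(s)|_{3}$ using \eqref{II-spatial-temporal-S(t)} (with exponent $\nu=(3-\alpha)/2\in[0,2]$) and \eqref{eq:spatial-regularity-mild-stoch}; the deterministic convolution, after distributing $A$ as in the spatial step, yields a H\"older exponent that is at least $(3-\alpha)/4$; and the stochastic term equals $\mathcal{O}_t-E(t-s)\mathcal{O}_s=(\mathcal{O}_t-\mathcal{O}_s)+(I-E(t-s))\mathcal{O}_s$, so it is controlled by $\|\mathcal{O}_t-\mathcal{O}_s\|_{L^{p}(\Omega,\dot{H}^{\alpha})}\leq C(t-s)^{\min\{1/2,(3-\alpha)/4\}}$ from Lemma~\ref{lem:stochastic-convolution} plus a remainder handled again by \eqref{II-spatial-temporal-S(t)} and \eqref{regularity:O-t}. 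Collecting the three contributions gives the asserted exponent $\min\{1/2,(3-\alpha)/4\}$. I expect the main obstacle, here and throughout the paper, to be the unbounded operator $A$ standing in front of the nonlinearity: every estimate requires splitting $A^{\mu}E(t-s)AF(X(s))$ so that the power of $A$ falling on the semigroup leaves an integrable singularity while the power falling on $F(X(s))$ is still controllable by the Sobolev regularity of $X$ available at that stage — which is precisely why each bootstrap step gains less than two full orders of spatial smoothing and why the overall regularity is capped at $\dot{H}^{3}$ by that of the stochastic convolution.
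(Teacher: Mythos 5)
The paper itself does not prove this theorem: it is quoted from \cite[Theorems 2.1 and 2.2]{qi2020error} (see also \cite{cui2021strongCHC}), so there is no internal proof to match. Your architecture — split off the stochastic convolution, solve the pathwise equation for $\bar X$ by truncation and a fixed point, derive moment bounds from Cahn--Hilliard-type energy estimates, bootstrap spatial regularity through the mild form using \eqref{I-spatial-temporal-S(t)}--\eqref{IV-spatial-temporal-S(t)} and \eqref{eq:norm-algebra}, and obtain \eqref{eq:time-regularity-mild-stoch} from the decomposition $(E(t-s)-I)X(s)$ plus the two convolutions on $[s,t]$ — is exactly the strategy of those references, and your uniqueness argument and the treatment of the temporal regularity (granted the spatial bound) are sound.

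There is, however, a genuine gap in the middle of your spatial-regularity chain. Testing with $A\bar X$ yields $L^\infty(0,T;\dot H^1)\cap L^2(0,T;\dot H^3)$ bounds, \emph{not} $L^\infty(0,T;V)$: the embedding $\dot H^2\hookrightarrow V$ is of no use because no uniform-in-time $\dot H^2$ bound is available at that stage. Consequently your $\dot H^1\to\dot H^2$ rung, which controls $|F(X(s))|_1$ through $|F(v)|_1\le C(1+\|v\|_V^2)|v|_1$ ``together with the $V$-bounds from the first step,'' is circular — the $V$-bound on $\bar X$ essentially requires the $\dot H^2$ bound you are trying to prove. The alternative route via $\dot H^1\subset L^6$, namely $|F(X)|_1\lesssim (1+|X|_1^2)\,|X|_2$ with $|X|_2\le |X|_1^{1/2}|X|_3^{1/2}$, only gives $|F(X(\cdot))|_1\in L^4(0,T)$, which is exactly borderline against the nonintegrability of $(t-s)^{-3/4}$ in $L^{4/3}$, so that rung does not close as written. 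Even the $A\bar X$ energy estimate itself does not close by Cauchy--Schwarz and Gronwall alone, since $|F(X)|_1^2\lesssim(1+|X|_1^4)|X|_2^2$ involves norms not yet under control; one needs either the intermediate energy estimates (test with $\bar X$, then with $A^2\bar X$, each closable using the integrability gained at the previous level) or the sign of the cubic term after integration by parts, $\langle\nabla(X^3),\nabla\bar X\rangle\ge 3\langle X^2\nabla\mathcal O_t,\nabla\bar X\rangle$, rather than the one-sided bound \eqref{eq:one-side-condition} alone. Once a uniform $\dot H^2$ bound is secured this way, your final rung via the mild form, $(t-s)^{-3/4}|F(X(s))|_2$ and $|F(X)|_2\lesssim 1+|X|_2^3$ from \eqref{eq:norm-algebra}, does give \eqref{eq:spatial-regularity-mild-stoch}; a similar quantitative control (in $L^\infty_x$ or $L^6$) is also what legitimizes removing the truncation in your existence step.
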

Combining \eqref{eq:spatial-regularity-mild-stoch} and  \eqref{eq:norm-algebra} yields the following result.
\begin{corollary}
If Assumptions \ref{assum:linear-operator-A}-\ref{assum:intial-value-data} are valid,
then for all $p \ge 1$,
\begin{equation}\label{eq:spatial-regularity-mild-F(X)}
\sup_{t\in[0,T]}
\|F ( X(t) )\|_{L^{p} ( \Omega, \dot{H}^2 ) }  <  \infty.
	\end{equation}
\end{corollary}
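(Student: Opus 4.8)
The plan is to reduce the claim to the algebra property \eqref{eq:norm-algebra} and the spatial moment bound \eqref{eq:spatial-regularity-mild-stoch}. Since $F(v)=v^3-v$, I would fix $t\in[0,T]$ and estimate $|F(X(t))|_2$ pathwise by the triangle inequality, $|F(X(t))|_2\le |X(t)^3|_2+|X(t)|_2$. The linear term is harmless: because $\lambda_j\ge\lambda_1>0$ for $j\ge1$, one has $|X(t)|_2\le\lambda_1^{-1/2}|X(t)|_3$, so it only contributes something controlled by $|X(t)|_3$.

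For the cubic term I would use that $H^2(\mathbf{D})$ is a Banach algebra (legitimate since $d\le 3$, so $H^2(\mathbf{D})\hookrightarrow C(\mathbf{D})$) together with the norm equivalence $|\cdot|_2\simeq\|\cdot\|_{H^2(\mathbf{D})}$ on $\dot{H}^2$ recalled just before \eqref{eq:norm-algebra}. Writing $X(t)^3=X(t)^2\cdot X(t)$ and applying \eqref{eq:norm-algebra} twice gives $\|X(t)^3\|_{H^2(\mathbf{D})}\le C\|X(t)\|_{H^2(\mathbf{D})}^3$; after projecting onto $\dot{H}$ (the mean-value projection $P$ is bounded on $H^2(\mathbf{D})$ and $|w|_2=|Pw|_2$, consistent with the convention $Av=APv$) and using the embedding $\dot{H}^3\hookrightarrow\dot{H}^2$, one obtains the pathwise bound $|F(X(t))|_2\le C\big(1+|X(t)|_3^3\big)$ with a constant independent of $t$ and $\omega$.

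Taking the $L^p(\Omega)$ norm and using the triangle inequality in $L^p(\Omega)$, together with the identity $\big\||X(t)|_3^3\big\|_{L^p(\Omega)}=\|X(t)\|_{L^{3p}(\Omega,\dot{H}^3)}^3$, yields
\[
\|F(X(t))\|_{L^p(\Omega,\dot{H}^2)}\le C\big(1+\|X(t)\|_{L^{3p}(\Omega,\dot{H}^3)}^3\big).
\]
The right-hand side is finite and independent of $t$ because \eqref{eq:spatial-regularity-mild-stoch} holds for every integrability exponent, in particular for $3p$; taking the supremum over $t\in[0,T]$ then completes the argument.

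I do not expect a genuine obstacle here. The only points needing a little care are the bookkeeping with the projection $P$ (so that $|X(t)^3|_2$ is interpreted consistently with $Av=APv$, since $X(t)^3$ need not have zero mean) and the fact that the moment bound \eqref{eq:spatial-regularity-mild-stoch} must be invoked at the inflated exponent $3p$ rather than $p$, which is allowed precisely because that estimate is stated for all $p\ge1$.
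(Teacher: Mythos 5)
Your argument is correct and follows exactly the route the paper intends: the corollary is stated there as an immediate consequence of combining the algebra property \eqref{eq:norm-algebra} with the $\dot{H}^3$ moment bound \eqref{eq:spatial-regularity-mild-stoch}, invoked at an inflated exponent. Your write-up simply supplies the details (the projection bookkeeping for the non-mean-zero cubic term and the use of exponent $3p$) that the paper leaves implicit, so there is nothing to add.
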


\subsection{Introduction to Malliavin calculus}
A brief introduction to Malliavin calculus is given in this subsection.
For more details, one can consult the classical monograph \cite{Nualart:06}.
Define a Hilbert space $U_0=Q^{\frac12}(\dot{H})$ with inner product
$\langle u , v \rangle_{U_0}
= \langle Q^{-\frac12} u , Q^{-\frac12} v \rangle $.
Let
$\mathcal{G}:L^2 ([0,T],U_0 ) \rightarrow  L^2(\Omega)$
be an isonormal Gaussian process.
More precisely, for any deterministic mapping
$\phi \in L^2 ([0,T],U_0)$,
$\mathcal{G}(\phi)$
is centered Gaussian with the covariance structure
\begin{equation}
	\E \big[ \mathcal{G}(\phi_1) \mathcal{G}(\phi_2) \big]
	= \langle \phi_1 , \phi_2 \rangle_{L^2 ([0,T],U_0 )},
	\,\,
	\phi_1,\phi_2 \in L^2 ([0,T],U_0).
\end{equation}
For example (see e.g., \cite{andersson2016weak}), we define the cylindrical $Q$-Wiener process
\begin{equation}
W(t)u=\mathcal{G}(\chi_{[0,t]} \otimes u), \, u \in U_0, \, t \in [0,T].
\end{equation}
Given $u \in U_0$, the process $W(t)u, t \in [0, T]$, is a Brownian motion and we have
\begin{equation}
\E [W(t)u W(s)v] = \text{min}\{s,t\}
   \langle u,v \rangle_{U_0}, \,\,u,v \in U_0.
\end{equation}
Let $C_p^{\infty}(\R^M,\R)$ be the space of all $C^{\infty}$-mappings with polynomial growth.
We define the family of  smooth $\dot{H}$-valued cylindrical random variables as
\begin{equation}
	\mathcal{S}(H)= \Big\{ G = \sum_{i=1}^N g_i
	\big( \mathcal{G}(\phi_1), \ldots, \mathcal{G}(\phi_M) \big) h_i:
	 \phi_1,\cdots,\phi_M \in L^2 ([0,T],U_0),\,
g_i \in C_p^{\infty}(\R^M,\R),\,
 h_i \in \dot{H}, \, i \in \{ 1,\cdots,N\} \Big\}.
\end{equation}
 The Malliavin derivative of
$G \in \mathcal{S}(\dot{H})$ is an element of $\mathcal{L}_2(U_0,\dot{H})$ and given by
\begin{equation}
	\mathcal{D}_t G:=\sum_{i=1}^N \sum_{j=1}^M
	\partial_j g_i \big( \mathcal{G}(\phi_1),\ldots,\mathcal{G}(\phi_M)\big) h_i
	\otimes \phi_j(t),
\end{equation}
where $h_i \otimes \phi_j(t)$ denotes the tensor product, that is, for $1 \leq j \leq M$ and $1 \leq i \leq N$,
\begin{equation}
	\big(  h_i \otimes \phi_j(t) \big)(u)
	=  \langle  \phi_j(t),u \rangle_{U_0} h_i \in \dot{H},
	\quad  \forall\,\,u\in U_0,~h_i\in \dot{H},~ t\in[0,T].
\end{equation}
If $G$ is $\mathcal F_t$-measurable, then ${\mathcal D}_s G= 0$ for $s > t$.
The derivative operator ${\mathcal D}$ is known to be closable and we define $\mathbb D^{1,2}(\dot{H})$ as the closure of  $\mathcal{S}(\dot{H})$ with respect to the norm
\begin{equation}
	\|G\|_{\mathbb D^{1,2}(\dot{H})}
	=\Bigl( \E  \big[ \| G \|^2 \big] +
	\E\int_0^T \| {\mathcal D}_t G \|_{\mathcal L_2(U_0,\dot{H})}^2
	\dd t  \Bigr)^{\frac12}.
\end{equation}

We are now ready to give the Malliavin integration by parts formula.
For any $G \in \mathbb D^{1,2}(\dot{H})$ and  an adapted process
$\Psi \in L^2([0,T] \times \Omega,\mathcal{L}_2(U_0,\dot{H}))$,
\begin{equation}\label{Malliavin integration by parts}
	\E \left [ \left \langle
	\int_0^T \Psi(t) \dd W(t) , G \right \rangle \right ]
	= \E  \left [  \int_0^T \left \langle \Psi(t) ,
\mathcal{D}_t G \right \rangle_{\mathcal{L}_2(U_0,\dot{H})} \dd t  \right ],
\end{equation}
where the stochastic integral is  It\^o integral.
To simplify the notation, we often write $\mathcal{L}_2^0$ instead of $\mathcal{L}_2(U_0,\dot{H})$.
Next, we define
${\mathcal D}_s^u G=\langle {\mathcal D}_s G, u \rangle $
 the derivative in the direction $u \in U_0$.
Then the Malliavin derivative acting on the It\^o integral
$\int_0^t \Psi(r) \dd W(r)$ satisfies for all $u \in U_0$,
\begin{equation}\label{Malliavin derivative on adjoint}
	\mathcal D_s^u \int_0^t \Psi(r) \dd W(r)=
	\int_0^t \mathcal D_s^u  \Psi(r)\dd W(r)
	+ \Psi(s) u, \quad 0 \leq s \leq t \leq T.
\end{equation}
Given another separable Hilbert space $\mathcal H$,
if $\sigma \in C_b^1(\dot{H}, \mathcal{H})$ and
$ G \in \mathbb D^{1,2}(\dot{H})$,
then $\sigma(G) \in \mathbb D^{1,2}(\mathcal H)$ and
the chain rule
  holds as
${\mathcal D}_t (\sigma(G)) =
\sigma'(G)\mathcal{D}_t G$.

\section{Weak convergence rate of the spectral Galerkin method}
\label{sec:spatial-weak-analysis}

This section is devoted to the  weak error analysis of the spatial spectral Galerkin semi-discretization.
In the beginning,
we define a finite dimension space $ H_N = \mathrm{span} \{  e_1 , \cdots , e_N \}$ and the projection operator $P_N: \dot{H}^{\beta} \to H_N$ by $ P_N x = \sum_{i=1}^N \langle x , e_i \rangle e_i$
 for all $ x \in \dot{H}^{\beta}, \beta \in \R$.
As a result, $A$ commutes with $P_N$ and
\begin{equation}\label{estimate:P^N-I}
	\big\| \big( P_N - I \big) x \big\|
	\leq  C \lambda_N^{- \frac \beta 2} |x|_{\beta},
	\quad \forall ~ \beta \geq 0.
\end{equation}
Applying the spectral  Galerkin approximation to \eqref{eq:CHC-abstract} results in the finite-dimensional stochastic differential equation, given by
\begin{equation}\label{eq:spatial-discrete}
	\dd X^N(t) + A^2 X^N(t) \dd t  + A P_N F( X^N(t) ) \dd t
	=  P_N \dd W(t),
	\, t \in (0,T]; \, \, X^N(0) = P_N X_0,
\end{equation}
whose unique solution, in the mild form, is written as
\begin{equation}\label{eq:space-mild}
	X^N(t)= E(t) P_N  X_0
	-  \int_0^t E(t-s) A P_N F(X^N(s)) \dd s
	+  \int_0^t E(t-s) P_N \dd W(s).
\end{equation}
Similarly  to Lemma \ref{lem:stochastic-convolution}, the spatio-temporal regularity of the discrete stochastic convolution
$ \int_0^t E(t-s) P_N \dd W(s) $ ($ \mathcal{O}_t^N $ for short) (see e.g., \cite{qi2021existence}) enjoys
\begin{equation}
	\sup_{ N \in \N } \sup_{ t \in [0,T] }
 \E \Big[ | \mathcal{O}_t^N |_3^p \Big]
	< \infty, \, \forall p \ge 1,
\end{equation}
and for $\alpha \in [0,3]$,
\begin{equation}
	\sup_{ N \in \N }  \big \| \mathcal{O}_t^N - \mathcal{O}_s^N
	\|_{L^p( \Omega , \dot{H}^{\alpha} )}
    \leq C |t-s|^{ \text{min} \{ \frac 12 , \frac{3-\alpha}{4} \}},
\, \forall p \ge 1.
\end{equation}

It has to be noted that essential difficulties exist for analyzing a finite element method for the considered SPDE.
Indeed, the orthogonal projection $P_h$ can not commute with operator $A$, although $P_N$ commutes with $A$.
Moreover, compared with finite difference method, the spectral Galerkin method admits a simpler analysis,
whose approximated solution is smooth and allows better control of the
Lipschitz constant.
The proof of the following regularity results  is  given in
\cite[Lemma 3.4]{qi2021existence}.
\begin{proposition}[Spatio-temporal regularity of spatial semi-discretization] \label{prop:regularity-spatial-spectral}
	If Assumptions \ref{assum:linear-operator-A}-\ref{assum:intial-value-data} are satisfied, then the mild solution of the spatial approximation process
	\eqref{eq:space-mild} admits  for all $p \ge 1$,
	\begin{equation}\label{eq:spatial-L6-norm-mild-Galerkin}
		\sup_{N \in \N} \E \big[ \sup_{ t \in [0,T]}
		\|X^N(t)\|_{L^6(\mathbf{D},\R)}^p \big]
		< \infty.
	\end{equation}
	Moreover, we have
	\begin{equation}\label{eq:spatial-regularity-mild-Galerkin}
		\sup_{N \in \N}
		\sup_{ t \in [0,T]} \|X^N(t)\|_{L^{p} ( \Omega, \dot{H}^3 ) }
      < \infty,
	\end{equation}
	and  for any $ \alpha \in [0,3]$,
	\begin{equation}\label{eq:temporal-regularity-mild-Galerkin}
		\sup_{N \in \N}
		\| X^N(t) - X^N(s) \|_{L^{p} ( \Omega, \dot{H}^{\alpha} ) }
		\leq  C (t-s)^{ \text{min} \{ \frac 12 , \frac{3-\alpha}{4} \}},\,
	0 \leq s < t \leq T.
	\end{equation}
\end{proposition}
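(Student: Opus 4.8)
The plan is to establish all three assertions as $N$-uniform versions of Lemma~\ref{lem:stochastic-convolution} and Theorem~\ref{thm:uniqueness-mild-solution}, reproducing the continuous-level arguments while checking that every constant is independent of $N$. Two structural facts make this uniformity automatic: $P_N$ commutes with every fractional power of $A$ (so $|P_Nv|_\beta\le|v|_\beta$, $P_N$ is a contraction on $\dot H$, and $\|A^\mu E(t)P_N\|_{\mathcal L(\dot H)}\le\|A^\mu E(t)\|_{\mathcal L(\dot H)}$), and $\tr(AP_NQP_N)\le\tr(AQ)=\|A^{1/2}Q^{1/2}\|_{\mathcal L_2}^2<\infty$; one also uses freely the $N$-uniform bounds on the discrete stochastic convolution $\mathcal O_t^N$ stated just above, and the fact that \eqref{eq:spatial-discrete} is a finite-dimensional SDE on $H_N$, for which existence and non-explosion follow a posteriori from Step~1 after the routine truncation of the cubic term. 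The genuine difficulty is Step~1: one needs a dissipative a priori estimate that is at once uniform in $N$ and robust enough to absorb the It\^o-correction trace term and the quadratic variation of the martingale part; the resolution is to phrase everything through the Ginzburg--Landau free energy together with its companion dissipation, whose structural matching is what closes the argument. Steps~2 and~3 are then essentially bootstrap and semigroup estimates.

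\emph{Step 1 (uniform $L^6$ bound).} Apply It\^o's formula --- legitimate because on the finite-dimensional space $H_N$ the functional $\mathcal J(u):=\tfrac12|u|_1^2+\int_{\mathbf D}\Phi(u)\,\dd x$ (with $\Phi(x)=\tfrac14x^4-\tfrac12x^2$, so $\Phi'=f$) is smooth and $X^N$ is an $H_N$-valued It\^o process --- to $\mathcal J(X^N(t))$. Setting $\mu^N:=AX^N+P_NF(X^N)$, one has $\mathcal J'(X^N)=\mu^N$ in $H_N$, the drift from the gradient term collapses to $\langle\mu^N,-A\mu^N\rangle=-|\mu^N|_1^2\le0$, and
\[
\dd\mathcal J(X^N(t))=\Big(-|\mu^N|_1^2+\tfrac12\tr[AP_NQP_N]+\tfrac12\tr\big[f'(X^N)P_NQP_N\big]\Big)\dd t+\langle\mu^N,P_N\,\dd W\rangle ,
\]
where $f'(X^N)$ denotes the multiplication operator by $f'(X^N(\cdot))=3X^N(\cdot)^2-1$. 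The first trace is $\le\|A^{1/2}Q^{1/2}\|_{\mathcal L_2}^2$, and the second is bounded --- via H\"older, the embedding $\dot H^{1/2}\hookrightarrow L^3$ in dimension $d\le3$, and $\|X^N\|_{L^6}^2\le C|X^N|_1^2\le C(1+\mathcal J(X^N))$ (using $\int_{\mathbf D}\Phi\ge-C$) --- by $C(1+\|X^N\|_{L^6}^2)\,\tr[A^{1/2}P_NQP_N]\le C(1+\mathcal J(X^N))$ with $C$ independent of $N$. The martingale part has $\dd\langle M\rangle_t=\|Q^{1/2}P_N\mu^N\|^2\dd t\le C|\mu^N|_1^2\,\dd t$ (since $\|A^{-1/2}QA^{-1/2}\|_{\mathcal L(\dot H)}<\infty$), i.e.\ precisely the good dissipation with the opposite sign. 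A stochastic Gr\"onwall argument --- take expectations to control $\E\mathcal J(X^N(t))$ and then $\E\int_0^t|\mu^N|_1^2\,\dd s$, apply Burkholder--Davis--Gundy and Young to the supremum, induct on $p$ (or run the computation on $\mathcal J^p$), with the usual localization to make $M$ a true martingale --- yields $\sup_N\E[\sup_{t\le T}\mathcal J(X^N(t))^p]<\infty$, whence \eqref{eq:spatial-L6-norm-mild-Galerkin} via $\|u\|_{L^6}\le C(1+\mathcal J(u))^{1/2}$; the data enter uniformly because $\mathcal J(P_NX_0)\le C(|X_0|_1^2+|X_0|_1^4)<\infty$.

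\emph{Step 2 (uniform $\dot H^3$ bound).} Bootstrap on the mild form \eqref{eq:space-mild} in two stages. First, using only $\|F(X^N(s))\|\le C(1+\|X^N(s)\|_{L^6}^3)$ from Step~1, \eqref{I-spatial-temporal-S(t)} yields
\[
\Big\|A^{\gamma/2}\!\int_0^t\!E(t-s)AP_NF(X^N(s))\,\dd s\Big\|\le C\!\int_0^t\!(t-s)^{-\gamma/4-1/2}\|F(X^N(s))\|\,\dd s ,
\]
finite for every $\gamma<2$; together with $\|A^{\gamma/2}E(t)P_NX_0\|\le C|X_0|_4<\infty$ and the stated bound on $|\mathcal O_t^N|_3$, this gives $\sup_N\sup_{t\le T}\|X^N(t)\|_{L^p(\Omega,\dot H^\gamma)}<\infty$ for any fixed $\gamma\in(\tfrac32,2)$. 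Since $\dot H^\gamma$ is a Banach algebra for such $\gamma$ (as $\gamma>d/2$), one then has $|F(X^N(s))|_\gamma\le C(1+|X^N(s)|_\gamma^3)$, and a second pass with \eqref{I-spatial-temporal-S(t)} --- splitting $A^{3/2}E(t-s)A=A^{(5-\gamma)/2}E(t-s)\,A^{\gamma/2}$ --- bounds $\|A^{3/2}\!\int_0^tE(t-s)AP_NF(X^N(s))\,\dd s\|$ by $C\int_0^t(t-s)^{-(5-\gamma)/4}|F(X^N(s))|_\gamma\,\dd s$, which is finite since $\gamma>1$; \eqref{eq:spatial-regularity-mild-Galerkin} follows.

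\emph{Step 3 (uniform time regularity).} Writing the mild solution started from time $s$ and using $\int_s^tE(t-r)P_N\,\dd W(r)=\mathcal O_t^N-E(t-s)\mathcal O_s^N$, decompose
\[
X^N(t)-X^N(s)=\big(E(t-s)-I\big)\bar X^N(s)-\int_s^tE(t-r)AP_NF(X^N(r))\,\dd r+\big(\mathcal O_t^N-\mathcal O_s^N\big),
\]
where $\bar X^N(s)=X^N(s)-\mathcal O_s^N\in\dot H^3$ uniformly in $N$ by Step~2 and the regularity of $\mathcal O_s^N$. By \eqref{II-spatial-temporal-S(t)} with $\nu=\min\{1,\tfrac{3-\alpha}{2}\}\in[0,1]$, the first term is $\le C(t-s)^{\min\{1/2,(3-\alpha)/4\}}\,|\bar X^N(s)|_{\min\{\alpha+2,3\}}$; the convolution term is estimated as in Step~2 with $\gamma\in(\tfrac32,2)$, its exponent $\tfrac{2+\gamma-\alpha}{4}$ being $\ge\min\{\tfrac12,\tfrac{3-\alpha}{4}\}$ for $\alpha\in[0,3]$; and the last term is the stated increment bound for $\mathcal O_t^N$. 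Taking $L^p(\Omega,\dot H^\alpha)$-norms and invoking the moment bounds of Steps~1--2 yields \eqref{eq:temporal-regularity-mild-Galerkin}.
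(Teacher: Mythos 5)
The paper does not prove Proposition~\ref{prop:regularity-spatial-spectral} at all: it is imported verbatim from \cite[Lemma 3.4]{qi2021existence}, so there is no internal argument to compare yours against line by line. Your self-contained reconstruction is sound and follows the route that is standard in this literature (and, in spirit, the cited reference): an $N$-uniform energy estimate obtained by applying It\^o's formula on $H_N$ to the Ginzburg--Landau functional, where the dissipation $-|\mu^N|_1^2$ absorbs the quadratic variation thanks to $\|Q^{1/2}P_N\mu^N\|^2\leq C|\mu^N|_1^2$ and the It\^o correction is controlled uniformly via $\operatorname{trace}(AP_NQP_N)\leq\|A^{1/2}Q^{1/2}\|_{\mathcal L_2}^2$ and the $L^3$--$L^3$--H\"older/embedding bound; then a two-stage bootstrap in the mild form \eqref{eq:space-mild} using \eqref{I-spatial-temporal-S(t)}; and finally the increment decomposition with \eqref{II-spatial-temporal-S(t)}, whose choice $\nu=\min\{1,\tfrac{3-\alpha}{2}\}$ and exponent bookkeeping are correct, combined with the stated increment bound for $\mathcal O^N_t$. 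The $N$-uniformity is indeed automatic for the reasons you state ($P_N$ commutes with powers of $A$ and is a contraction).

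Two places are compressed and would need to be written out in a complete proof, though neither is a conceptual gap. First, the higher-moment closure of the stochastic Gr\"onwall step in Step~1: one must make explicit how $\E\big[\sup_{t\leq T}|M_t|^p\big]$ is absorbed via Burkholder--Davis--Gundy and Young into $\E\big[\big(\int_0^T|\mu^N|_1^2\,\dd s\big)^{p}\big]$, using $\mathcal J\geq -C$ and a localization so that all quantities are a priori finite. Second, in Step~2 you invoke the algebra property and the equivalence $|\cdot|_\gamma\simeq\|\cdot\|_{H^\gamma(\mathbf D)}$ for a noninteger $\gamma\in(\tfrac32,2)$, where the Neumann boundary condition enters the characterization of $\mathrm{dom}(A^{\gamma/2})$; this works because $X^N\in H_N$ is smooth with $\partial_n X^N=0$, hence $\partial_n F(X^N)=0$, but it deserves a remark (the paper itself only records \eqref{eq:norm-algebra} at level $2$). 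Alternatively you can bypass fractional algebras altogether: from the first pass $\dot H^{\gamma_1}\subset V$, use $\|\nabla F(X^N)\|\leq(1+3\|X^N\|_V^2)\|\nabla X^N\|$ to reach a uniform $\dot H^{2}$ bound (the singularity is then $(t-s)^{-\frac{\gamma+1}{4}}$, integrable for $\gamma<3$), and conclude with \eqref{eq:norm-algebra} exactly as in the paper's own Corollary on $F(X^N)$.
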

Combining \eqref{eq:spatial-regularity-mild-Galerkin} and \eqref{eq:norm-algebra} gives the next result.
\begin{corollary}
Under Assumptions
	\ref{assum:linear-operator-A}-\ref{assum:intial-value-data},
	\begin{equation}\label{eq:spatial-regularity-mild-F(XN)}
		\sup_{t\in[0,T]}
		\|F ( X^N(t) )\|_{L^{p} ( \Omega, \dot{H}^2 ) }  <  \infty,
         \, \forall p \ge 1.
	\end{equation}
\end{corollary}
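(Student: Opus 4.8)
The plan is to exploit the pointwise-in-space structure of the cubic nonlinearity $F(v)=v^3-v$ together with the Banach algebra property of $H^2(\mathbf{D})$ recorded in \eqref{eq:norm-algebra}, and then feed in the uniform $\dot{H}^3$-moment bound \eqref{eq:spatial-regularity-mild-Galerkin}. First I would fix $t\in[0,T]$ and $N\in\N$ and split, by the triangle inequality,
\[
\|F(X^N(t))\|_{L^p(\Omega,\dot{H}^2)}
\le \|(X^N(t))^3\|_{L^p(\Omega,\dot{H}^2)}
 + \|X^N(t)\|_{L^p(\Omega,\dot{H}^2)},
\]
interpreting the $\dot{H}^2$-norm of a not-necessarily-mean-zero function $g$ via $|Pg|_2$; this is harmless, since $P$ subtracts a constant and hence $\|Pg\|_{H^2(\mathbf{D})}\le C\|g\|_{H^2(\mathbf{D})}$, and in any case only $P F(X^N)$ enters \eqref{eq:space-mild} through the operator $A$. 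The second summand is immediate: since $\lambda_1>0$ we have the continuous embedding $\dot{H}^3\hookrightarrow\dot{H}^2$ (i.e.\ $|v|_2\le\lambda_1^{-1/2}|v|_3$), so it is dominated by $C\|X^N(t)\|_{L^p(\Omega,\dot{H}^3)}$, which is finite uniformly in $N$ and $t$ by \eqref{eq:spatial-regularity-mild-Galerkin}.

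For the cubic term I would apply \eqref{eq:norm-algebra} twice. For $\omega$-a.e.\ realisation, writing $v=X^N(t)(\omega)\in\dot{H}^2$, the algebra inequality $\|fg\|_{H^2(\mathbf{D})}\le C\|f\|_{H^2(\mathbf{D})}\|g\|_{H^2(\mathbf{D})}$ (valid for all $f,g\in H^2(\mathbf{D})$, irrespective of mean value) together with the equivalence of $|\cdot|_2$ with $\|\cdot\|_{H^2(\mathbf{D})}$ on $\dot{H}^2$ gives
\[
|v^3|_2 \le C\|v^3\|_{H^2(\mathbf{D})}
\le C\|v\|_{H^2(\mathbf{D})}\,\|v^2\|_{H^2(\mathbf{D})}
\le C\|v\|_{H^2(\mathbf{D})}^3 \le C|v|_2^3 .
\]
Raising to the power $p$, taking expectations, and using $\dot{H}^3\hookrightarrow\dot{H}^2$ once more yields
\[
\|(X^N(t))^3\|_{L^p(\Omega,\dot{H}^2)}
\le C\big(\E\big[|X^N(t)|_2^{3p}\big]\big)^{1/p}
= C\|X^N(t)\|_{L^{3p}(\Omega,\dot{H}^2)}^3
\le C\|X^N(t)\|_{L^{3p}(\Omega,\dot{H}^3)}^3 ,
\]
and by \eqref{eq:spatial-regularity-mild-Galerkin} applied with exponent $3p$ the right-hand side is bounded uniformly in $t\in[0,T]$ and $N\in\N$. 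Combining the two bounds and taking the supremum over $t\in[0,T]$ gives the claim.

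There is no serious obstacle here; the only points requiring a little care are the bookkeeping between the homogeneous norm $|\cdot|_2$ and the inhomogeneous Sobolev norm $\|\cdot\|_{H^2(\mathbf{D})}$ (handled by their equivalence on $\dot{H}^2$ and by the fact that the algebra estimate does not require the factors to have zero mean), and the passage from $L^p$ to $L^{3p}$ moments, which is legitimate precisely because \eqref{eq:spatial-regularity-mild-Galerkin} is available for every $p\ge1$.
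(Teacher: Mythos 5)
Your proof is correct and follows essentially the same route as the paper, which obtains the corollary precisely by combining the uniform $\dot{H}^3$-moment bound \eqref{eq:spatial-regularity-mild-Galerkin} with the algebra property \eqref{eq:norm-algebra} of $H^2(\mathbf{D})$. Your additional bookkeeping (splitting $F(v)=v^3-v$, the double use of the algebra inequality, the projection $P$ for the non-mean-zero cubic term, and passing to $L^{3p}$ moments) simply makes explicit the details the paper leaves to the reader.
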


The next result shows that $X^N(t)$ is differentiable in Malliavin sense.
\begin{proposition}[Boundedness of the Malliavin derivative]
	\label{Estimate of Malliavin derivative of the solution}
	Let Assumptions
	\ref{assum:linear-operator-A}-\ref{assum:intial-value-data} hold. Then the Malliavin derivative of $X^N(t)$ satisfies
\begin{align}\label{eq:malliavin-derivative}
\E\big[\| \mathcal{D}_s X^N(t)   \|_{\mathcal{L}_2(U_0,\dot{H})}^2\big] < \infty, \quad  0 \leq s \leq t \leq T.
\end{align}
\end{proposition}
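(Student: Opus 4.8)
The plan is to differentiate the mild form \eqref{eq:space-mild} in the Malliavin sense and close an integral inequality for the quantity
$h(s,t) := \E\bigl[\| \mathcal{D}_s X^N(t)\|_{\mathcal{L}_2^0}^2\bigr]$ in the variable $t$, for fixed $s$. Applying the rule \eqref{Malliavin derivative on adjoint} to the stochastic convolution term in \eqref{eq:space-mild} and the chain rule to $F(X^N(\cdot))$, one formally obtains, for $0\le s\le t\le T$,
\begin{equation*}
\mathcal{D}_s X^N(t)
 = E(t-s)P_N
 - \int_s^t E(t-r)\,A\,P_N\, F'(X^N(r))\,\mathcal{D}_s X^N(r)\,\dd r,
\end{equation*}
where $F'(X^N(r))$ denotes multiplication by $3(X^N(r))^2-1$. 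Since $X^N(t)\in H_N$ and everything may be viewed inside the finite-dimensional space $H_N$, there is no issue with the closability/approximation aspect: first derive everything for the Galerkin-truncated equation, where $\mathcal{D}_s X^N(t)$ exists by standard SDE Malliavin theory (the coefficients are smooth with polynomial growth, and the moment bounds \eqref{eq:spatial-L6-norm-mild-Galerkin}--\eqref{eq:spatial-regularity-mild-Galerkin} guarantee integrability), and only then pass to the identity above.

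The key steps, in order: (i) justify that $X^N(t)\in\mathbb D^{1,2}(\dot H)$ and satisfies the variation-of-constants identity above — this is standard once one notes the Galerkin system is a finite-dimensional SDE with locally Lipschitz, smooth coefficients and moments of all orders; (ii) take $\|\cdot\|_{\mathcal{L}_2^0}$-norms, square, apply $(a+b)^2\le 2a^2+2b^2$, and take expectations to get
\begin{equation*}
h(s,t) \le 2\,\|E(t-s)P_N\|_{\mathcal{L}_2^0}^2
 + 2\,\E\Bigl[\Bigl(\int_s^t \|A E(t-r)\|_{\mathcal{L}(\dot H)}\,\|P_N F'(X^N(r))\mathcal{D}_s X^N(r)\|_{\mathcal{L}_2^0}\,\dd r\Bigr)^2\Bigr];
\end{equation*}
(iii) bound $\|E(t-s)P_N\|_{\mathcal{L}_2^0}^2 = \|E(t-s)P_N Q^{1/2}\|_{\mathcal{L}_2}^2 \le \|A^{1/2}Q^{1/2}\|_{\mathcal{L}_2}^2\,\|A^{-1/2}E(t-s)\|_{\mathcal{L}(\dot H)}^2 \le C (t-s)^{1/2}$ using \eqref{I-spatial-temporal-S(t)} with $\mu=-1/2$ and Assumption \ref{assum:noise-term}, which is finite (and even integrable near $s$); (iv) control the nonlinear term: since $F'(X^N(r))$ is multiplication by a bounded-in-$L^6$ function, use the embedding/algebra estimates and $\sup_N\E[\sup_t\|X^N(t)\|_{L^6}^p]<\infty$ from \eqref{eq:spatial-L6-norm-mild-Galerkin} together with Cauchy--Schwarz in $\Omega$ to peel off $\|F'(X^N(r))\|$ in an appropriate operator norm, reducing the integrand to $\|\mathcal{D}_s X^N(r)\|_{\mathcal{L}_2^0}$ times the singular but integrable kernel $\|A E(t-r)\|_{\mathcal{L}(\dot H)}\le C(t-r)^{-1}$; (v) apply Cauchy--Schwarz (or a weighted Young inequality) to the time integral to land on $\int_s^t (t-r)^{-\theta} h(s,r)\,\dd r$ for some $\theta<1$, and conclude $h(s,t)\le C + C\int_s^t (t-r)^{-\theta} h(s,r)\,\dd r$, whence $\sup_{s\le t\le T} h(s,t)<\infty$ by a singular (generalized Gronwall) inequality.

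The main obstacle is step (iv): the unbounded operator $A$ in front of the nonlinear term means the naive estimate $\|A E(t-r)\|_{\mathcal{L}(\dot H)}\le C(t-r)^{-1}$ gives a kernel that is \emph{not} square-integrable in $r$ near $t$, so one cannot simply square inside the integral and use Fubini. The remedy — and the technical heart of the argument — is to split $A E(t-r) = A^{1/2}E(t-r)\cdot A^{1/2}$ (or use $A^{1-\epsilon}$ on one side and move $A^\epsilon$ onto $F'(X^N(r))\mathcal{D}_s X^N(r)$ using the regularity \eqref{eq:spatial-regularity-mild-Galerkin} and the algebra property \eqref{eq:norm-algebra}), so that the time kernel becomes $(t-r)^{-1/2-\epsilon}$, which is integrable and, after Cauchy--Schwarz, square-integrable in the $\mathcal{L}^1$-to-$\mathcal{L}^2$ trade-off; alternatively, one keeps the $\mathcal{L}_2^0$-norm inside a single integral (not squared) via the Minkowski/Cauchy--Schwarz inequality $\E[(\int\cdots)^2]\le (\int (\E[\cdots^2])^{1/2}\,(\text{kernel}))^2$-type bound, which requires only $(t-r)^{-1}\in L^1$. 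One must also take care that multiplication by $F'(X^N(r)) = 3(X^N(r))^2-1$ maps $\mathcal{L}_2^0$ boundedly into itself with an $\omega$-dependent norm controlled by $\|X^N(r)\|_{L^\infty}^2$ or $\|X^N(r)\|_{\dot H^3}^2$ (via Sobolev embedding), so that the expectation of the product is finite by Hölder in $\Omega$ against the uniform moment bounds; this is exactly the point where the stronger regularity of the spectral Galerkin approximation (as opposed to a finite element one) is used.
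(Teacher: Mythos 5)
Your overall plan (differentiate the mild form \eqref{eq:space-mild}, then close a Gronwall-type inequality in expectation for $h(s,t)=\E\big[\|\mathcal{D}_s X^N(t)\|_{\mathcal{L}_2^0}^2\big]$) has a genuine gap at its central step, namely the closure of the Gronwall loop. After inserting the variation-of-constants identity, the integrand contains the product $F'(X^N(r))\,\mathcal{D}_s X^N(r)$, where $F'(X^N(r))$ is a \emph{random} multiplier whose operator norm is only controlled by $\|X^N(r)\|_{L^\infty}^2$ (or $\|X^N(r)\|_{L^6}^2$ at the price of a derivative on the other factor). Under the expectation you cannot reduce $\E\big[(1+\|X^N(r)\|_{L^\infty}^2)^2\|\mathcal{D}_s X^N(r)\|_{\mathcal{L}_2^0}^2\big]$ to $C\,h(s,r)$: H\"older/Cauchy--Schwarz in $\Omega$, which you invoke to ``peel off'' the multiplier, necessarily produces a \emph{higher} moment $\E\big[\|\mathcal{D}_s X^N(r)\|_{\mathcal{L}_2^0}^{2p}\big]$, $p>1$, so the recursion never closes at the level of $h$ (nor at any fixed moment level). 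The alternative of applying a singular Gronwall lemma pathwise, with the random coefficient $1+\sup_r\|X^N(r)\|_{L^\infty}^2$, yields a bound of Mittag--Leffler/exponential type in that coefficient, and taking expectations would then require exponential moments of $\sup_{t}\|X^N(t)\|_{L^\infty}^4$, which are not provided by \eqref{eq:spatial-L6-norm-mild-Galerkin}--\eqref{eq:spatial-regularity-mild-Galerkin} (only polynomial moments are available). There are also two smaller slips: since the semigroup is generated by $-A^2$, \eqref{I-spatial-temporal-S(t)} gives $\|AE(t-r)\|_{\mathcal{L}(\dot H)}\le C(t-r)^{-1/2}$, not $(t-r)^{-1}$, and \eqref{I-spatial-temporal-S(t)} is stated for $\mu\ge 0$, so your step (iii) should simply use $\|E(t-s)P_NQ^{1/2}\|_{\mathcal{L}_2}\le \|Q^{1/2}\|_{\mathcal{L}_2}\le C\|A^{1/2}Q^{1/2}\|_{\mathcal{L}_2}$; these are harmless, but the Gronwall closure is not.

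The paper avoids this obstruction entirely by not treating $F'(X^N)$ as a Lipschitz-type perturbation. It works with the pathwise differential equation for $\mathcal{D}_s^y X^N(t)$ and performs two energy estimates. Testing with $A^{-1}\mathcal{D}_s^y X^N(t)$ cancels the unbounded operator $A$ in front of the nonlinearity, and the cubic part of $F'$ then enters with a favorable sign ($3\langle (X^N)^2\mathcal{D}_s^yX^N,\mathcal{D}_s^yX^N\rangle\ge 0$) and is simply dropped; this yields the \emph{pathwise, deterministic} bounds $|\mathcal{D}_s^y X^N(t)|_{-1}^2\le C|y|_{-1}^2$ and $\int_s^t|\mathcal{D}_s^y X^N(r)|_1^2\,\dd r\le C|y|_{-1}^2$. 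In the second estimate, testing with $\mathcal{D}_s^y X^N(t)$, the dissipation $\|A\mathcal{D}_s^yX^N\|^2$ absorbs the cross term, and the random factor $\sup_r\|X^N(r)\|_{L^6}^4$ ends up multiplying the already-bounded deterministic quantity $\int_s^t|\mathcal{D}_s^y X^N(r)|_1^2\,\dd r$, so the expectation is taken only once, at the very end, using nothing more than the polynomial moment bound \eqref{eq:spatial-L6-norm-mild-Galerkin} and $\|Q^{1/2}\|_{\mathcal{L}_2}<\infty$. If you want to salvage your mild-form strategy, you would need some mechanism playing the role of this monotonicity argument; as written, the key inequality $h(s,t)\le C+C\int_s^t(t-r)^{-\theta}h(s,r)\,\dd r$ is not justified.
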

\begin{proof}
The existence of the Malliavin derivative
$\mathcal{D}_s^y X^N(t)$ can be obtained by the standard argument such as the Picard iteration.
Here, we will focus on the bound \eqref{eq:malliavin-derivative}.
Taking the Malliavin derivative on the equation \eqref{eq:space-mild} in the direction $y \in U_0$ and using the chain rule yield that for
	$0 \leq s \leq t \leq T$,
	\begin{align}
		\mathcal{D}_s^y X^N(t)
		= E(t-s)P_N y - \int_s^t E(t-r)  A P_N F'( X^N(r) )
		\mathcal{D}_s^y X^N(r) \dd r.
	\end{align}
Following a standard strategy for the analysis of the Cahn--Hilliard equations, the proof of the upper bounds for $\mathcal{D}_s^y X^N(t)$ requires to exploit two energy estimates, in the $| \cdot |_{-1}$ and $| \cdot |$ norms.
First, observe that for all $t \ge s$, $\mathcal{D}_s^y X^N(t)$ is differentiable  and  satisfies
	\begin{align}\label{eq:time-differentiable}
		\frac {\dd \mathcal{D}_s^y X^N(t)}{\dd t}
		+  A^2 \mathcal{D}_s^y X^N(t)
		+  A P_N F'(X^N(t)) \mathcal{D}_s^y X^N(t) =0.
	\end{align}
	Multiplying $A^{-1} \mathcal{D}_s^y X^N(t)$ on both sides of the above equation  yields
	\begin{align}\label{eq:inner-product}
		\begin{split}
			\Big \langle \frac {\dd \mathcal{D}_s^y X^N(t)}{\dd t},
			 A^{-1} \mathcal{D}_s^y X^N(t) \Big \rangle
                + \langle A^2 \mathcal{D}_s^y X^N(t),
			A^{-1} \mathcal{D}_s^y X^N(t) \rangle
			+ \langle A P_N F'(X^N(t))\mathcal{D}_s^y X^N(t),
			A^{-1} \mathcal{D}_s^y X^N(t) \rangle=0.
		\end{split}
	\end{align}
	Next, integrating \eqref{eq:inner-product} over $[s , t]$ one obtains
	\begin{align}
		\begin{split}
			| \mathcal{D}_s^y X^N(t) |_{-1}^2 &= | y |_{-1}^2
		\!	- \! 2 \! \! \int_s^t \!\! \! |\mathcal{D}_s^y X^N(r)|_{1}^2 \dd r
		\!	- \! 2 \! \! \int_s^t \!\! \! \left \langle  F'(X^N(r))\mathcal{D}_s^y X^N(r),
			\mathcal{D}_s^y X^N(r) \right \rangle \dd r
			\\&
			= | y |_{-1}^2 \!
			- \! 2 \int_s^t \! \! \! |\mathcal{D}_s^y X^N(r)|_{1}^2 \dd r
			+ 2 \int_s^t \! \!\! \left \langle A^{\frac12} \mathcal{D}_s^y X^N(r),
			A^{-\frac12} \mathcal{D}_s^y X^N(r) \right \rangle \dd r
			\\&\quad
			- 6 \int_s^t \left \langle (X^N(r))^2 \mathcal{D}_s^y X^N(r),
			\mathcal{D}_s^y X^N(r) \right \rangle \dd r
			\\& \leq
			| y |_{-1}^2
			-  \int_s^t |\mathcal{D}_s^y X^N(r)|_{1}^2 \dd r
			+ \int_s^t |\mathcal{D}_s^y X^N(r)|_{-1}^2 \dd r,
		\end{split}
	\end{align}
where in the last step the elementary inequality $2ab \leq a^2+b^2$ was used.
	Hence, by  Gronwall's inequality we have
	\begin{equation}
		|\mathcal{D}_s^y X^N(t)|_{-1}^2 \leq C |y|_{-1}^2.
	\end{equation}
Therefore, one has
	\begin{equation}
		\int_s^t |\mathcal{D}_s^y X^N(r)|_1^2 \dd r \leq C |y|_{-1}^2.
	\end{equation}
Next, we may multiply by $\mathcal{D}_s^y X^N(t)$ both sides of \eqref{eq:time-differentiable} to get
	\begin{align}
		\begin{split}
			\left \langle \frac {\dd \mathcal{D}_s^y X^N(t)}{\dd t},
			\mathcal{D}_s^y X^N(t) \right\rangle
			+ \left \langle A^2 \mathcal{D}_s^y X^N(t),  \mathcal{D}_s^y X^N(t) \right \rangle
			 + \left \langle A P_N F'(X^N(t))\mathcal{D}_s^y X^N(t),
			\mathcal{D}_s^y X^N(t) \right \rangle=0.
		\end{split}
	\end{align}
Similarly, the  energy estimate in the  $| \cdot |$ norm is treated as follows:
\begin{align}
\begin{split}
| \mathcal{D}_s^y X^N(t) |^2 &= | y |^2
			- 2 \int_s^t \|A \mathcal{D}_s^y X^N(r)\|^2 \dd r
	- 2 \int_s^t \langle  F'(X^N(r))\mathcal{D}_s^y X^N(r),
	A \mathcal{D}_s^y X^N(r) \rangle \dd r
\\&\leq | y |^2
	- 2 \int_s^t \|A \mathcal{D}_s^y X^N(r)\|^2 \dd r
	+ 2\int_s^t \|A \mathcal{D}_s^y X^N(r)\|^2 \dd r
\\ &\quad
+  \tfrac12 \int_s^t \| F'(X^N(r)) \mathcal{D}_s^y X^N(r) \|^2 \dd r
\\ &   \leq |  y |^2
+ C \big( \sup_{r \in [s,t]} \| X^N(r) \|_{L^6}^4 +1 \big)
			\int_s^t |\mathcal{D}_s^y X^N(r)|_1^2 \dd r
\\ & \leq | y |^2
	+ C \, | y |_{-1}^2
	\big( \sup_{r \in [s,t]} \| X^N(r) \|_{L^6}^4 +1 \big)
\\ &  \leq | y |^2  + C \, | y |^2 \big( \sup_{r \in [s,t]} \| X^N(r) \|_{L^6}^4 +1 \big),
\end{split}
\end{align}
where in the first inequality the elementary inequality
$2ab \leq 2a^2 + \tfrac12 b^2$ was used.
What's more, H\"{o}lder's inequality $\| f g \| \leq C \| f \|_{L^3} \| g \|_{L^6}$ and Sobolev embedding inequality $\dot{H}^{\frac d3} \subset L^6, d=1,2,3$ were used in the above second inequality.
Choosing $y=Q^{\frac 12}e_i,i =\{1,2,\cdots\}$ and  taking expectation yield
\begin{equation}\label{eq:DSXNT}
\E\big[\| \mathcal{D}_s X^N(t)   \|_{\mathcal{L}_2(U_0,\dot{H})}^2\big]
 \leq C \, \|Q^{\frac 12}\|_{\mathcal{L}_2}^2
 \leq C \, \|A^{\frac 12} Q^{\frac 12}\|_{\mathcal{L}_2}^2< \infty,
\end{equation}
where \eqref{eq:ass-AQ-condition} and \eqref{eq:spatial-L6-norm-mild-Galerkin} were used.
\end{proof}

\begin{remark}
The trace-class noise (i.e., $\|Q^{\frac12}\|_{\mathcal{L}_2} < \infty$) is sufficient to obtain \eqref{eq:DSXNT} and thus Proposition \ref{Estimate of Malliavin derivative of the solution}.
\end{remark}

Let us now turn to some useful results on the nonlinear term $F$.
\begin{lemma}\label{lemma;F}
	Let $F$ be the Nemytskii operator defined in Assumption \ref{assum:nonlinearity},
	then for $d=1,2,3$,
	\begin{equation}\label{eq:F'-1}
		|F'(x)y|_1 \leq C \big( 1+|x|_{2}^2 \big) |y|_{1}, \,
       x \in \dot{H}^2 ,\, y \in \dot{H}^1,
	\end{equation}
	and
	\begin{align}\label{eq:F'--eta}
		\begin{split}
			|F'(\varsigma)\psi |_{-1}
			\leq C\big(1+|\varsigma|_2^2\big)
			|\psi|_{-1},
			\quad
			\forall
			\varsigma \in \dot{H}^{2},
			\psi \in \dot{H}.
		\end{split}
	\end{align}
\end{lemma}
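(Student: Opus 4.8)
The plan is to use the explicit form $F'(x)y=(3x^2-1)y=3x^2y-y$ and to reduce both inequalities to product estimates in first-order Sobolev spaces, relying on the norm equivalences $|\cdot|_1\simeq\|\cdot\|_{H^1(\mathbf D)}$ on $\dot H^1$ and $|\cdot|_2\simeq\|\cdot\|_{H^2(\mathbf D)}$ on $\dot H^2$ recorded after \eqref{eq:norm}, and on the Sobolev embeddings valid for $d\le 3$, namely $\dot H^2\hookrightarrow L^\infty(\mathbf D)$ and $\dot H^1\hookrightarrow L^6(\mathbf D)$.

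\textbf{Step 1 (proof of \eqref{eq:F'-1}).} Since $Av=APv$ for the fractional powers as well, $|F'(x)y|_1=|P(3x^2y-y)|_1$; as $P$ only subtracts a constant it is bounded on $H^1(\mathbf D)$, so it suffices to bound $\|3x^2y-y\|_{H^1(\mathbf D)}$. The linear part gives $\|y\|_{H^1(\mathbf D)}\le C|y|_1$, which accounts for the summand $1$. For the cubic part I would use the Leibniz rule $\nabla(x^2y)=2x(\nabla x)y+x^2\nabla y$ together with Hölder's inequality: from $\dot H^2\hookrightarrow L^\infty(\mathbf D)$ one gets $\|x^2y\|+\|x^2\nabla y\|\le\|x\|_{L^\infty(\mathbf D)}^2\,\|y\|_{H^1(\mathbf D)}\le C|x|_2^2|y|_1$, while for the mixed term $\|2x(\nabla x)y\|\le 2\|x\|_{L^\infty(\mathbf D)}\,\|\nabla x\|_{L^3(\mathbf D)}\,\|y\|_{L^6(\mathbf D)}\le C|x|_2^2|y|_1$, using $\|\nabla x\|_{L^3(\mathbf D)}\le C\|\nabla x\|_{L^6(\mathbf D)}\le C|x|_2$ (apply $\dot H^1\hookrightarrow L^6(\mathbf D)$ to $\nabla x\in\dot H^1$) and $\|y\|_{L^6(\mathbf D)}\le C|y|_1$. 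Summing the contributions yields \eqref{eq:F'-1}. Alternatively one may invoke the multiplier bound $\|fg\|_{H^1(\mathbf D)}\le C\|f\|_{H^2(\mathbf D)}\|g\|_{H^1(\mathbf D)}$ together with the algebra property \eqref{eq:norm-algebra} to get $\|x^2y\|_{H^1(\mathbf D)}\le C\|x^2\|_{H^2(\mathbf D)}\|y\|_{H^1(\mathbf D)}\le C|x|_2^2|y|_1$.

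\textbf{Step 2 (proof of \eqref{eq:F'--eta} by duality).} For $w\in\dot H$, Cauchy--Schwarz in the eigenbasis $\{e_j\}_{j\ge 1}$ gives the dual characterisation $|w|_{-1}=\sup\{\langle w,\phi\rangle:\phi\in\dot H^1,\ |\phi|_1\le 1\}$. Because $F'(\varsigma)$ is pointwise multiplication by $3\varsigma^2-1$, it is symmetric on $L^2(\mathbf D)$, and since $\varsigma\in\dot H^2\hookrightarrow L^\infty(\mathbf D)$ we have $F'(\varsigma)\phi\in L^2(\mathbf D)$; hence for $\psi\in\dot H$ and any admissible $\phi$, $\langle F'(\varsigma)\psi,\phi\rangle=\langle\psi,PF'(\varsigma)\phi\rangle\le|\psi|_{-1}\,|F'(\varsigma)\phi|_1\le C(1+|\varsigma|_2^2)|\psi|_{-1}$, where the last step is \eqref{eq:F'-1}. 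Taking the supremum over such $\phi$ gives \eqref{eq:F'--eta}.

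The only genuinely delicate point is the cubic product estimate in $H^1(\mathbf D)$ when the second argument carries only one derivative: since $H^1(\mathbf D)\not\hookrightarrow L^\infty(\mathbf D)$ for $d=2,3$, one cannot extract $y$ in $L^\infty$, so the regularity must be apportioned correctly among the Leibniz terms, spending the $L^\infty$ and $L^6$ control supplied by $\dot H^2$ on $x$ and $\nabla x$ and the $L^6$ control supplied by $\dot H^1$ on $y$. Everything else is routine bookkeeping with the norm equivalences, the boundedness of $P$, and the duality pairing.
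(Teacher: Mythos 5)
Your proposal is correct. For \eqref{eq:F'--eta} your duality argument (testing against $\phi\in\dot H^1$ with $|\phi|_1\le 1$, using the symmetry of multiplication by $3\varsigma^2-1$ and then invoking \eqref{eq:F'-1}) is essentially identical to the paper's proof, which writes $\|A^{-\frac12}F'(\varsigma)\psi\|$ as a supremum over $\|\xi\|\le 1$ and uses the self-adjointness of $A^{-\frac12}$ and $F'(\varsigma)$; for \eqref{eq:F'-1} the paper only cites an external lemma, and your direct Leibniz-rule argument with $\dot H^2\hookrightarrow L^\infty$, $H^1\hookrightarrow L^6$ and the boundedness of $P$ correctly supplies the omitted details.
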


\begin{proof}
The estimate \eqref{eq:F'-1} is an immediate consequence of
\cite[Lemma 3.2]{qi2021existence}.
To see \eqref{eq:F'--eta}, with the aid of the self-adjointness of $A^{-\frac12}$ and $F'(\varsigma)$, we have
\begin{equation}
\begin{split}
\| A^{-\frac12} F'(\varsigma)\psi \| &
  = \sup_{\|\xi\| \leq 1} \langle A^{-\frac12}   F'(\varsigma) \psi,
            \xi \rangle
  = \sup_{\|\xi\| \leq 1} \langle \psi,
                F'(\varsigma) A^{-\frac12} \xi \rangle
  =\sup_{\|\xi\| \leq 1} \langle A^{-\frac12} \psi,
              A^{\frac12}  F'(\varsigma) A^{-\frac12} \xi \rangle
  \\ & \leq | \psi |_{-1} \sup_{\|\xi\| \leq 1}
                   | F'(\varsigma) A^{-\frac12} \xi |_1
       \leq C\big(1+|\varsigma|_2^2\big)|\psi|_{-1}.
\end{split}
\end{equation}
The condition $\psi \in \dot{H}$ is used in the above first and second identities.
This finishes the proof.
\end{proof}

Now, we are well prepared to carry out the weak error analysis of the spatial semi-discretization.
\begin{theorem}[Weak convergence rate of the spatial approximation]
	\label{thm:weak-space}
Let $X(T)$ and $X^N(T)$, given by \eqref{eq:mild-solution} and \eqref{eq:space-mild}, be the solution of problems \eqref{eq:CHC-abstract} and  \eqref{eq:spatial-discrete} respectively.
	Let  Assumptions \ref{assum:linear-operator-A}-\ref{assum:intial-value-data} hold.
Then   for $\Phi \in C_b^2$,
	there exists a constant $C > 0$ such that
\begin{equation}
		\big| \E [ \Phi(X(T))] - \E [\Phi(X^N(T))] \big|
		\leq C \, \lambda_N^{-2}.
	\end{equation}
\end{theorem}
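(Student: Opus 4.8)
The plan is to follow the decomposition already laid out in the introduction, namely equation \eqref{eq:intro-spatial-error-decomposition}, splitting the spatial weak error into a term governed by the strong error between the "tilde" processes $\bar X(T)=X(T)-\mathcal O_T$ and $\bar X^N(T)=X^N(T)-\mathcal O_T^N$, and a term involving only the difference of stochastic convolutions $\mathcal O_T-\mathcal O_T^N$. Introduce $e^N(t):=P_N\bar X(t)-\bar X^N(t)$, which by subtracting the mild forms \eqref{eq:mild-solution} and \eqref{eq:space-mild} (after applying $P_N$) solves the random PDE \eqref{eq:intro-randomPDE}. First I would use a Taylor expansion of $\Phi$ about $X(T)$ to bound
\[
\big|\E[\Phi(\bar X(T)+\mathcal O_T)]-\E[\Phi(\bar X^N(T)+\mathcal O_T)]\big|
\le C\|\bar X(T)-P_N\bar X(T)\|_{L^2(\Omega,\dot H)}+C\|e^N(T)\|_{L^2(\Omega,\dot H)},
\]
using $\|\Phi'\|_\infty<\infty$. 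The first term is $O(\lambda_N^{-2})$ directly from \eqref{estimate:P^N-I} and the fact that $\bar X(T)\in L^2(\Omega,\dot H^4)$ (one gains two extra derivatives over the $\dot H^3$-regularity of $X(T)$ by removing the stochastic convolution and using the smoothing estimate \eqref{IV-spatial-temporal-S(t)} on the deterministic-drift integral together with $F(X)\in L^p(\Omega,\dot H^2)$ from \eqref{eq:spatial-regularity-mild-F(X)}).

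The heart of the matter is $\|e^N(T)\|_{L^2(\Omega,\dot H)}$. Here I would first carry out an energy estimate on \eqref{eq:intro-randomPDE}: testing against $A^{-1}e^N(t)$ (i.e. working in $|\cdot|_{-1}$), the unbounded operator in front of $F$ is absorbed since $\langle P_N A[F(X)-F(X^N)],A^{-1}e^N\rangle=\langle F(X)-F(X^N),e^N\rangle$ up to the projection, and one invokes the one-sided Lipschitz condition \eqref{eq:one-side-condition} plus the $|\cdot|_1$ dissipation from $A^2$. Writing $X(t)-X^N(t)=e^N(t)+(\bar X(t)-P_N\bar X(t))+(\mathcal O_t-\mathcal O_t^N)$, splitting $F(X)-F(X^N)$ accordingly, and controlling the extra terms by the $L^6$-moment bounds \eqref{eq:spatial-L6-norm-mild-Galerkin}, the regularity of $\mathcal O_t$, $\mathcal O_t^N$ (Lemma \ref{lem:stochastic-convolution} and its discrete analogue), and \eqref{estimate:P^N-I}, a Gronwall argument should give
\[
\Big\|\int_0^T|e^N(t)|_1^2\,\dd t\Big\|_{L^p(\Omega,\R)}\le C\lambda_N^{-4}.
\]
Then, going back to the mild form of \eqref{eq:intro-randomPDE}, $e^N(T)=-\int_0^T E(T-t)P_N A[F(X(t))-F(X^N(t))]\,\dd t$, I would write $A=A^{1/2}\cdot A^{1/2}$, move one half onto the semigroup using \eqref{I-spatial-temporal-S(t)} with the integrability bound \eqref{III-spatial-temporal-S(t)}, and estimate $|F(X(t))-F(X^N(t))|_1\le C(1+\|X\|_V^2+\|X^N\|_V^2)(|e^N|_1+\cdots)$ via \eqref{eq:F'-1}; combined with the $\lambda_N^{-4}$ bound above this yields $\|e^N(T)\|_{L^2(\Omega,\dot H)}\le C\lambda_N^{-2}$.

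For the second term in \eqref{eq:intro-spatial-error-decomposition}, Taylor-expand $\Phi$ about $X^N(T)=\bar X^N(T)+\mathcal O_T^N$ to second order. The second-order remainder is bounded by $C\|\mathcal O_T-\mathcal O_T^N\|_{L^2(\Omega,\dot H)}^2$ using $\|\Phi''\|_\infty<\infty$, and since $\mathcal O_T-\mathcal O_T^N=(I-P_N)\mathcal O_T$ has strong order $\lambda_N^{-3/2}$ (from the $\dot H^3$-regularity in Lemma \ref{lem:stochastic-convolution} and \eqref{estimate:P^N-I}), its square is $O(\lambda_N^{-3})=o(\lambda_N^{-2})$. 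The first-order term $\E[\Phi'(X^N(T))(\mathcal O_T-\mathcal O_T^N)]$ is where Malliavin calculus enters: write $\mathcal O_T-\mathcal O_T^N=\int_0^T E(T-s)(I-P_N)\,\dd W(s)$ and apply the integration-by-parts formula \eqref{Malliavin integration by parts} with $G=\Phi'(X^N(T))$ (a $C^1_b$ image of an element of $\mathbb D^{1,2}(\dot H)$, using Proposition \ref{Estimate of Malliavin derivative of the solution} and the chain rule), turning the expression into $\E\int_0^T\langle E(T-s)(I-P_N),\Phi''(X^N(T))\mathcal D_s X^N(T)\rangle_{\mathcal L_2^0}\,\dd s$. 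Bounding $\|\Phi''\|_\infty$ out, using $\|(I-P_N)A^{-1}\|_{\mathcal L(\dot H)}\le C\lambda_N^{-1}$ together with $\|A^{1/2}Q^{1/2}\|_{\mathcal L_2}<\infty$ to produce one factor $\lambda_N^{-1}$, and a second factor $\lambda_N^{-1}$ from a further $(I-P_N)$ or smoothing estimate acting on the Malliavin derivative (whose $L^2$-bound is uniform in $N$ by Proposition \ref{Estimate of Malliavin derivative of the solution}), gives $O(\lambda_N^{-2})$. Collecting the three contributions completes the proof. I expect the Gronwall/energy estimate yielding $\|\int_0^T|e^N|_1^2\,\dd t\|_{L^p}\le C\lambda_N^{-4}$ — in particular handling the cubic nonlinearity through the $L^6$ and $V$ moment bounds while keeping everything uniform in $N$ — to be the main obstacle, with the precise bookkeeping in the Malliavin term a close second.
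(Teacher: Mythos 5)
Your overall strategy coincides with the paper's own proof: the same splitting \eqref{eq:intro-spatial-error-decomposition}, the $|\cdot|_{-1}$ energy estimate for $e^N$ with Gronwall giving $\big\|\int_0^T|e^N(t)|_1^2\,\dd t\big\|_{L^p(\Omega,\R)}\le C\lambda_N^{-4}$, recovery of $\|e^N(T)\|$ from the mild form, and Malliavin integration by parts for the convolution term. However, the two places where the rate $\lambda_N^{-2}$ is actually extracted are not justified as you state them. In the Malliavin term, your mechanism of one factor $\lambda_N^{-1}$ from $\|(I-P_N)A^{-1}\|$ and a second factor $\lambda_N^{-1}$ ``from a further $(I-P_N)$ or smoothing estimate acting on the Malliavin derivative'' does not work: Proposition \ref{Estimate of Malliavin derivative of the solution} gives only a bound on $\|\mathcal{D}_s X^N(T)\|_{\mathcal{L}_2^0}$ that is uniform in $N$, with no additional $N$-decay, and the projection $(I-P_N)$ cannot be moved across $\Phi''(X^N(T))$ (which does not commute with $P_N$), so no second small factor is available from that side. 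The correct bookkeeping, as in \eqref{eq:space-rate-Malliavin}, extracts the full rate from the deterministic factor alone,
\begin{equation*}
\big\|(I-P_N)E(T-s)A^{-\frac12}\big\|_{\mathcal{L}}
\le \big\|(I-P_N)A^{-2}\big\|_{\mathcal{L}}\,\big\|A^{\frac32}E(T-s)\big\|_{\mathcal{L}}
\le C\,\lambda_N^{-2}(T-s)^{-\frac34},
\end{equation*}
the singularity being integrable and the $A^{-\frac12}$ paid for by \eqref{eq:ass-AQ-condition}; the Malliavin derivative enters only through its uniform bound.

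Second, in passing from the $\lambda_N^{-4}$ integral bound to $\|e^N(T)\|_{L^2(\Omega,\dot H)}\le C\lambda_N^{-2}$, your plan to place the whole difference $F(X(t))-F(X^N(t))$ in the $|\cdot|_1$ norm via \eqref{eq:F'-1} loses rate on two of the three pieces: $|(I-P_N)\bar X(t)|_1\le C\lambda_N^{-3/2}|\bar X(t)|_4$ and, worse, $|\mathcal{O}_t-\mathcal{O}_t^N|_1\le C\lambda_N^{-1}|\mathcal{O}_t|_3$, so a uniform $|\cdot|_1$ treatment yields only $O(\lambda_N^{-1})$. One must use different norms for the different pieces, as in \eqref{eq:e(T)}: the piece driven by $e^N$ is estimated in $|\cdot|_1$ (with $(T-s)^{-1/4}$ and Cauchy--Schwarz against the $\lambda_N^{-4}$ bound), the piece driven by $(I-P_N)\bar X$ in the $\|\cdot\|$ norm (with $(T-s)^{-1/2}$ and $\|(I-P_N)\bar X\|\le C\lambda_N^{-2}|\bar X|_4$), and the piece driven by $\mathcal O_t-\mathcal O_t^N$ in $|\cdot|_{-1}$ via \eqref{eq:F'--eta} (with $A^{3/2}E(T-s)$, i.e. $(T-s)^{-3/4}$, and \eqref{eq:O-ON}). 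With these two corrections your argument becomes the paper's proof; the remaining steps you describe (the $\dot H^4$-bound on $\bar X(T)$, the energy estimate, and the quadratic remainder of order $\lambda_N^{-3}$) match it.
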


\begin{proof}
	By introducing two processes
	$\bar{X}(t)= X(t) - \mathcal{O}_t$
	and
	$\bar{X}^N(t) = X^N(t) - \mathcal{O}^N_t$,
	we can separate the error
	$\E\big[\Phi(X(T))\big]
	-\E \big[\Phi(X^N(T))\big]$ into two terms as follows
	\begin{equation}\label{full:separation}
		\begin{split}
			\E\big[\Phi(X(T))\big] -\E \big[\Phi(X^N(T))\big]
			 &=
			\Big(\E\big[\Phi(\bar{X}(T)+\mathcal{O}_T)\big]
			-\E \big[\Phi(\bar{X}^N(T)+\mathcal{O}_T)\big]\Big) \\
			& \quad +
			\Big(\E\big[\Phi(\bar{X}^N(T)+\mathcal{O}_T)\big]
			-\E \big[\Phi(\bar{X}^N(T)+\mathcal{O}_T^N)\big]\Big)
			\\
			&=:I_1+I_2.
		\end{split}
	\end{equation}
	To estimate $I_1$, it suffices to consider the strong convergence between $\bar{X}(T)$ and $\bar{X}^N(T)$.
	To be specific, by the Taylor expansion and triangle inequality we have
	\begin{equation}\label{eq:I1}
		\begin{split}
			|I_1| &= \Big|\E\big[\Phi(\bar{X}(T)+\mathcal{O}_T)\big]
			-\E \big[\Phi(\bar{X}^N(T)+\mathcal{O}_T)\big]\Big|
			\leq
			C \big| \E [
               \| \bar{X}(T) - \bar{X}^N(T) \| ]  \big|
			\\& \leq  C
			\| \bar{X}(T) - P_N \bar{X}(T) \|_{L^2(\Omega,\dot{H})}
			+   C
			\| P_N \bar{X}(T) - \bar{X}^N(T) \|_{L^2(\Omega,\dot{H})}.
		\end{split}
	\end{equation}
To bound the first error term
$\| \bar{X}(T) - P_N \bar{X}(T) \|_{L^2(\Omega,\dot{H})}$,
we need an estimate on $\bar{X}(t),t \in [0,T]$, that is,
\begin{equation}\label{regular:X-bar}
\begin{split}
 \|  \bar{X}(t)  &
 \|_{L^p(\Omega,\dot{H}^4)}
	 = \| A^{2}  \bar{X}(t)\|_{L^p(\Omega,\dot{H})}
	\\ & \leq \| A^{2 } E(t) X_0 \|_{L^p(\Omega,\dot{H})}
		+ \left\| \int_0^t A^{2} E( t - s )
			A P F(X(t))\,\dd s \right\|_{L^p(\Omega,\dot{H})}
    \\	& \quad + \left\| \int_0^t A^{2} E( t - s )
			A P   (F( X(t)) -  F (X(s) ))\,\dd s
                  \right\|_{L^p(\Omega,\dot{H})}
	\\&  \leq C \Big( | X_0 |_4
          +  \|F(X(t))\|_{L^p(\Omega,\dot{H}^2)}
       + \int_0^t (t-s)^{-1}
       \| P (F( X(t)) -  F (X(s) )) \|_{L^p(\Omega,\dot{H}^2)}
         \dd s \Big)
    \\&  \leq C \Big( | X_0 |_4
          +  \|F(X(t))\|_{L^p(\Omega,\dot{H}^2)} \Big)
    \\&   \quad + C \Big( 1 + \sup_{r\in [0,t]} \|X(r)\|_{L^{4p}(\Omega,\dot{H}^2)}^2 \Big) \cdot
     \int_0^t (t-s)^{-1}
       \|  X(t) -  X(s)  \|_{L^{2p}(\Omega,\dot{H}^2)}   \dd s
    \\& \leq C \Big( 1
       + \int_0^t (t-s)^{-1} \cdot (t-s)^{\frac 14} \dd s \Big)
    \\&  <  \infty,
\end{split}
\end{equation}
where \eqref{I-spatial-temporal-S(t)} and \eqref{IV-spatial-temporal-S(t)} were used in the above second inequality, \eqref{eq:norm-algebra} was used in the above third inequality, \eqref{eq:spatial-regularity-mild-stoch}-\eqref{eq:spatial-regularity-mild-F(X)} were used in the above fourth inequality.
As a result, by using \eqref{estimate:P^N-I}, we  get
	\begin{equation}\label{eq:I-PN-XT}
		\| \bar{X}(T) - P_N \bar{X}(T) \|_{L^p(\Omega,\dot{H})}
		=
		\| (I- P_N) A^{-2 }
		A^{2} \bar{X}(T) \|_{L^p(\Omega,\dot{H})}
		\leq
		C \lambda_N^{-2}.
	\end{equation}
In the next step, we consider the second term of \eqref{eq:I1} in  the treatment of $I_1$.
For convenience,  we denote
$ e^N(t)=P_N \bar{X}(t) - \bar{X}^N(t)$, which satisfies
	\begin{equation}
		\tfrac{\dd}{\dd t}  e^N(t)
		+ A^2  e^N(t) + A P_N
		\big[ F(\bar{X}(t)+\mathcal{O}_t)
		- F(\bar{X}^N(t) + \mathcal{O}^N_t) \big] =0.
	\end{equation}
We multiply the above identity by $A^{-1}{e^N(t)}$ to get
	\begin{equation}\label{eq:space-frechet-estimate}
		\begin{split}
			\tfrac12 \tfrac{\dd}{\dd  t}  | e^N(t)|_{-1}^2
			+ |  e^N(t) |_1^2
			& =
			- \langle  e^N(t),
			F(\bar{X}( t)+\mathcal{O}_{t})-
               F(P_N\bar{X}( t)+\mathcal{O}_{ t})
                 \rangle
			\\& \quad - \langle {e^N(t)},
			F(P_N\bar{X}( t)+\mathcal{O}_{ t})
- F(\bar{X}^N(t)+\mathcal{O}_{t})
                 \rangle
			\\& \quad - \langle {e^N(t)},
			F(\bar{X}^N({t})+\mathcal{O}_{t})
   - F(\bar{X}^N(t)+\mathcal{O}^N_{t})
                 \rangle
         \\&  \leq
			\tfrac12 \| e^N(t)\|^2 + \tfrac12
			\|F(\bar{X}( t)+\mathcal{O}_{t})
- F(P_N\bar{X}(t)+\mathcal{O}_{ t})\|^2
			\\& \quad  + \| e^N(t)\|^2 + | e^N(t)|_1 \cdot
			|F(\bar{X}^N(t)+\mathcal{O}_{ t})-
                    F(\bar{X}^N(t)+\mathcal{O}^N_{t})|_{-1}
			\\& \leq
			\tfrac32 \|e^N(t)\|^2 + \tfrac12
			\|F(\bar{X}(t)+\mathcal{O}_{t})
   - F(P_N\bar{X}(t)+\mathcal{O}_{t})\|^2
			\\& \quad + \tfrac14 |e^N(t)|_1^2 +
			|F(\bar{X}^N(t)+\mathcal{O}_{t})-
                    F(\bar{X}^N( t)+\mathcal{O}^N_{ t}|_{-1}^2
			\\&\leq \tfrac34 |e^N(t)|_1^2 + \tfrac98 | e^N(t)|_{-1}^2
			+ C \|\bar{X}(t) - P_N \bar{X}( t)\|^2
			(1+ |\bar{X}(t)|_2^4 + |\mathcal{O}_{t}|_2^4)
			\\& \quad + C |\mathcal{O}_{t} - \mathcal{O}^N_{ t}|_{-1}^2
			(1+ |\bar{X}^N({t})|_2^4 + |\mathcal{O}_{ t}|_2^4),
		\end{split}
	\end{equation}
    where in the above first inequality we used Young's inequality $ab\leq \frac12 a^2 + \frac12 b^2$, (21) and Cauchy-Schwartz inequality.
    Also, \eqref{eq:local-condition}, Sobolev embedding inequality $\dot{H}^2 \subset V$, Young's inequality $\frac32 ab \leq \frac12 a^2 + \frac98 b^2$, Taylor's expansion and  Lemma \ref{lemma;F} were used in the above last inequality.
	By Gronwall's inequality, we further deduce
	\begin{equation*}
		\begin{split}
		|e^N(T)|_{-1}^2 + \int_0^T |  e^N(t) |_1^2 \dd t
			& \leq  C \int_0^T \|\bar{X}( t) - P_N \bar{X}( t)\|^2
			(1+ |\bar{X}(t)|_2^4 + |\mathcal{O}_{t}|_2^4) \dd t
			\\& \quad + C \int_0^T
			|\mathcal{O}_{t} - \mathcal{O}^N_{t}|_{-1}^2
			(1+ |\bar{X}^N(t)|_2^4 + |\mathcal{O}_{t}|_2^4) \dd t.
		\end{split}
	\end{equation*}
Applying \eqref{regularity:O-t} and \eqref{estimate:P^N-I} gives
\begin{equation}\label{eq:O-ON}
		\|\mathcal{O}_{ t} - \mathcal{O}^N_{t}
                       \|_{L^{p}(\Omega,\dot{H}^{-1})}
		= \|(I-P_N) A^{-2} A^{\frac{3}2} \mathcal{O}_{\color{red} t}\|_{L^{p}(\Omega,\dot{H})}
		\leq C \lambda_N^{-2}
   ~\| \mathcal{O}_t \|_{L^{p}(\Omega,\dot{H}^{3})}
   \leq C \lambda_N^{-2}.
\end{equation}
With the aid of the regularity of $X(T)$ and $X^N(T)$, \eqref{eq:I-PN-XT},
H\"{o}lder's inequality and \eqref{eq:O-ON},
one can find that
	\begin{equation}\label{eq:int-e-1}
		\begin{split}
			\Big\| \int_0^T |  e^N(t) |_1^2 \dd t \Big\|_{L^p(\Omega,\R)}
			& \leq C \int_0^T
	\|\bar{X}(t) - P_N \bar{X}( t)\|_{L^{4p}(\Omega,\dot{H})}^2 \dd  t
 + C \int_0^T \|\mathcal{O}_{t}
    -\mathcal{O}^N_t \|_{L^{4p}(\Omega,\dot{H}^{-1})}^2 \dd  t
			\\&    \leq C \lambda_N^{-4}.
		\end{split}
	\end{equation}
	We  are now ready to estimate
\begin{equation}\label{eq:e(T)}
\begin{split}
\|  e^N(T) \|_{L^p(\Omega,\dot{H})} & =
\Big\| P_N \Big( E(T) X_0 - \int_0^T E(T-s) A F(X(s)) \dd s \Big)
\\& \quad  - \Big( E(T)P_N X_0 - \int_0^T E(T-s) A P_N F(X^N(s)) \dd s \Big) \Big\|_{L^p(\Omega,\dot{H})}
\\& = \left\| \int_0^T E(T-s)  A P_N  (F(X(s))-F(X^N(s))) \dd s \right\|_{L^p(\Omega,\dot{H})}
\\& \leq
\left\| \int_0^T E(T-s) A P_N (F(\bar{X}(s)+\mathcal{O}_s)- F(P_N\bar{X}(s)+\mathcal{O}_s)) \dd s \right\|_{L^p(\Omega,\dot{H})}
\\& \quad + \left\| \int_0^T  E(T-s) A P_N ( F(P_N\bar{X}(s)+\mathcal{O}_s)
			-F(\bar{X}^N(s)+\mathcal{O}_s)) \dd s \right\|_{L^p(\Omega,\dot{H})}
\\& \quad + \left\| \int_0^T E(T-s)A  P_N ( F(\bar{X}^N(s)+\mathcal{O}_s)-
			F(\bar{X}^N(s)+\mathcal{O}^N_s)) \dd s \right\|_{L^p(\Omega,\dot{H})}
			\\&=:  e^N_1(T) + e^N_2(T) +  e^N_3(T).
\end{split}
\end{equation}
Again, by \eqref{I-spatial-temporal-S(t)}, \eqref{eq:local-condition}, \eqref{regularity:O-t},\eqref{estimate:P^N-I},  \eqref{regular:X-bar} and Sobolev embedding inequality  $\dot{H}^2 \subset V$, we have
	\begin{equation}\label{eq:e1(T)}
		\begin{split}
			e^N_1(T) &=
			\left\| \int_0^T E(T-s) A P_N (F(\bar{X}(s)+\mathcal{O}_s)- F(P_N\bar{X}(s)+\mathcal{O}_s)) \dd s \right\|_{L^p(\Omega,\dot{H})}
			\\& \leq
			C \int_0^T (T-s)^{-\frac12}
			\|\bar{X}(s) - P_N \bar{X}(s)\|_{L^{2p}(\Omega,\dot{H})}  \dd s \times
			\big(1+ \sup_{s\in[0,T]}\|\bar{X}(s)\|_{L^{4p}(\Omega,\dot{H}^2)}^2
			+ \sup_{s\in[0,T]} \|\mathcal{O}_s\|_{L^{4p}(\Omega,\dot{H}^2)}^2\big)
			\\& \leq
			C \lambda_N^{-2}.
		\end{split}
	\end{equation}
	From \eqref{I-spatial-temporal-S(t)},
	\eqref{eq:F'-1} in Lemma \ref{lemma;F}, H\"{o}lder's inequality,
	\eqref{eq:int-e-1} and regularity of $X(t)$ and $X^N(t)$,
	it follows that
	\begin{equation}
		\begin{split}
			e^N_2(T) & \leq C
			\left\|
			\int_0^T (T-s)^{{  -\frac14}}
			\big|F(P_N\bar{X}(s)+\mathcal{O}_s)
			-F(\bar{X}^N(s)+\mathcal{O}_s) \big|_1 \dd s
			\right\|_{L^p(\Omega,\R)}
			\\& \leq C
			\left\|
			\int_0^T (T-s)^{-\frac14}  | e^N(s)|_1
			\big( 1 + |\bar{X}(s)|_2^2 + |\bar{X}^N(s)|_2^2
			+ |\mathcal{O}_s|_2^2  \big) \dd s
			\right\|_{L^p(\Omega,\R)}
			\\&\leq C
			\left\|
			\int_0^T | e^N(s)|_1^2 \dd s
			\right\|_{L^p(\Omega,\R)}^{\frac12}
		\left(
			\int_0^T (T-s)^{-\frac12} \dd s \right)^{\frac12}
			\\& \leq C \lambda_N^{-2}.
		\end{split}
	\end{equation}
	Similarly  to the estimate of \eqref{eq:e1(T)}  with
	\eqref{eq:F'--eta} and \eqref{eq:O-ON} instead,
	we obtain
	\begin{equation}
		\begin{split}
			 e^N_3(T) &=
			\left\| \int_0^T E(T-s)  A^{\frac32}
			A^{-\frac12}  P_N
			(F(\bar{X}^N(s)+\mathcal{O}_s)- F(\bar{X}^N(s)+\mathcal{O}^N_s))
			\dd s \right\|_{L^p(\Omega,\dot{H})}
			\\& \leq
			C \int_0^T (T-s)^{-\frac34}
			\|\mathcal{O}_s - \mathcal{O}^N_s
			\|_{L^{2p}(\Omega,\dot{H}^{-1})}  \dd s
			\big(1+ \sup_{s\in[0,T]}\|\bar{X}^N(s)\|_{L^{4p}(\Omega,\dot{H}^2)}^2
			+ \sup_{s\in[0,T]} \|\mathcal{O}_s\|_{L^{4p}(\Omega,\dot{H}^2)}^2\big)
			\\& \leq
			C \lambda_N^{-2}.
		\end{split}
	\end{equation}
Therefore,	gathering estimates of  $ e^N_1(T)$, $ e^N_2(T)$ and
$ e^N_3(T)$ together yields
\begin{equation}\label{eq:e(T)-order}
\| P_N \bar{X}(T) - \bar{X}^N(T) \|_{L^2(\Omega,\dot{H})} \leq
C \lambda_N^{-2}.
\end{equation}
Combining it with \eqref{eq:I-PN-XT}  yields
\begin{equation}\label{space1}
\| \bar{X}(T) - \bar{X}^N(T) \|_{L^2(\Omega,\dot{H})} \leq
C \lambda_N^{-2},
\end{equation}
and thus $|I_1|\le C \lambda_N^{-2}$.
Next, we turn to the estimate of $|I_2|$.
Using Taylor's expansion and the triangle inequality, we get
\begin{align}\label{full;I_2}
\begin{split}
|I_2| &= \Big| \E \Big[ \Phi^{'} (X^N(T))(\mathcal{O}_T-\mathcal{O}_T^N) +\int_0^1 \Phi^{''}(X^N(T)+\lambda(\mathcal{O}_T-\mathcal{O}_T^N)) (\mathcal{O}_T-\mathcal{O}_T^N,\mathcal{O}_T-\mathcal{O}_T^N)
	(1-\lambda)\dd \lambda\Big]\Big|
	\\&\leq \Big|\E\big[\Phi' (X^N(T))(I-P_N)\mathcal{O}_T
			\big]\Big|
	+C \,\E\big[\| \mathcal{O}_T-\mathcal{O}^N_T\|^2\big].
		\end{split}
	\end{align}
The second term can be easily bounded by utilizing  \eqref{estimate:P^N-I}  and the moment bound for $|\mathcal{O}_T|_3$ in Lemma \ref{lem:stochastic-convolution}, that is
\begin{align}\label{space2}
\E \Big[ \Big\| \mathcal{O}_T - \mathcal{O}^N_T \Big\|^2 \Big]
	= \E \Big[ \Big\| (I-P_N) \mathcal{O}_T \Big\|^2 \Big]
		\leq C \lambda_N^{-3}.
\end{align}
For the first term,  \eqref{eq:malliavin-derivative} in  Proposition \ref{Estimate of Malliavin derivative of the solution},
the Malliavin integration by parts formula
\eqref{Malliavin integration by parts}, the chain rule of the Malliavin derivative, \eqref{I-spatial-temporal-S(t)}, \eqref{estimate:P^N-I} and \eqref{eq:ass-AQ-condition} enable us to obtain
\begin{align}\label{eq:space-rate-Malliavin}
\begin{split}
  \Big|  \E \big[ \Phi' ( X^N(T) )  ( I - P_N )  \mathcal{O}_T \big] \Big|
    &  = \Big| \E  \Big[ \Big \langle \int_0^T ( I - P_N ) E(T-s) \dd W(s), \Phi^{'} (X^N(T)) \Big \rangle \Big]  \Big|
	\\ & = \Big| \E \int_0^T \left< (I-P_N)E(T-s),
       \mathcal{D}_s \Phi^{'} (X^N(T)) \right>_{\mathcal{L}_2^0}\dd s \Big|
	\\ & \leq C \, \E\int_0^T \big\| ( I - P_N ) E(T-s)
			\big\|_{\mathcal{L}_2^0}
			\| \Phi^{''} (X^N(T)) \|_{\mathcal{L}}
			\| \mathcal{D}_s X^N(T) \|_{\mathcal{L}_2^0} \, \dd s
	\\&  \leq C \, \int_0^T \|(I-P_N) E(T-s) A^{-\frac12}\|_{\mathcal{L}}
			\|A^{\frac12}Q^{\frac{1}{2}}\|_{\mathcal{L}_2} \dd s
	\\&  \leq C \, \lambda_N^{- 2} \int_0^T (T-s)^{-\frac34} \dd s
         \leq C \, \lambda_N^{- 2}.
\end{split}
\end{align}
Hence, we obtain $|I_2|\le C \lambda_N^{-2}$.
Gathering it with $|I_1|\le C \lambda_N^{-2}$ then concludes the proof.
\end{proof}

\section{Weak convergence rate of the backward Euler method}
\label{sec:full-weak-analysis}

Based on the spatial spectral Galerkin approximation \eqref{eq:spatial-discrete},
this section concerns  the weak error analysis of a backward Euler method in the temporal direction.
We divide the interval $[0,T]$ into $M$ equidistant subintervals  with the time step-size $\tau=\tfrac{T}{M}$ and denote the nodes $t_m=m\tau$ for
$m \in \{0, 1,\cdots,M \},\,  M \in \mathbb{N}^{+}$.
Then,  the fully discrete scheme reads
\begin{equation}\label{eq:full-discrete-method-1}
	X_{t_m}^{M,N} - X_{t_{m-1}}^{M,N} + \tau A^2 X_{t_m}^{M,N}
	+ \tau P_N A F(X_{t_m}^{M,N})  = P_N \Delta W_m,
	\quad X_0^{M,N}= P_N X_0,
\end{equation}
where $\Delta W_m :=W(t_m)-W(t_{m-1})$ for short.
By introducing a family of operators $\{E_{\tau,N}^m\}_{m=1}^M$:
$
	E_{\tau,N}^m v = (I + \tau A^2)^{-m} P_N v
     = \sum_{j=1}^N ( 1 + \tau \lambda_j^2)^{-m}
     \langle v,e_j \rangle e_j
$, $\forall\  v \in \dot{H}$,
we have
\begin{equation}\label{eq:full-discrete-method-2}
	X_{t_m}^{M,N} = E_{\tau,N}^m  X_0
	- \tau \sum_{j=1}^{m} E_{\tau,N}^{m-j+1} A  F(X_{t_j}^{M,N})
	+ \sum_{j=1}^{m} E_{\tau,N}^{m-j+1}  \Delta W_j.
\end{equation}
Thanks to \cite[Theorem C.2]{stuart1996dynamical}, the implicit scheme \eqref{eq:full-discrete-method-1} is well-defined.
More details can be found in \cite{qi2021existence}.
Following the proof of \cite[(2.10)]{furihata2018strong}, it is easy to check that the operator $E_{\tau,N}^m$ satisfies
\begin{equation}\label{eq:E-tau-N-estimate}
	\| A^{\mu} E_{\tau,N}^m v \| \leq C t_m^{-\frac \mu 2} \| v \|,
	\quad \mu \in [0,2],\, v \in \dot{H},\, m \in \{1,2,\cdots,M\}
\end{equation}
and there exists a constant $C$ such that for all $v \in \dot{H}$,
\begin{equation}\label{eq:E-tau-N-2-estimate}
\Big( \tau \sum_{j=1}^m \| A E_{\tau,N}^j v \|^2 \Big)^{\frac12}
   \leq C \|v\|.
\end{equation}
The regularity of the fully discrete approximation is derived in the following result.
\begin{proposition}\label{prop:full-regularity}
Let Assumptions \ref{assum:linear-operator-A}-\ref{assum:intial-value-data} be satisfied,
then we have for all $ p \geq 1$,
\begin{equation}\label{eq:moment-bounds-full}
\sup_{N \in \N}\sup_{m \in \{0,1,\cdots,M\} }
  \|X^{M,N}_{t_{m}}\|_{L^p(\Omega,\dot{H}^2)}  < \infty.
\end{equation}
\end{proposition}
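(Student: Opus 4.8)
The plan is to transport to the fully discrete scheme the strategy of splitting off the stochastic convolution and then running energy estimates, and to close the $\dot H^2$ bound by a weakly singular discrete Gr\"onwall inequality applied to the mild representation \eqref{eq:full-discrete-method-2}. First I would peel off the discrete stochastic convolution $\mathcal{O}_{t_m}^{M,N}:=\sum_{j=1}^{m}E_{\tau,N}^{m-j+1}\Delta W_j$, which satisfies $\mathcal{O}_{t_m}^{M,N}-\mathcal{O}_{t_{m-1}}^{M,N}+\tau A^2\mathcal{O}_{t_m}^{M,N}=P_N\Delta W_m$, $\mathcal{O}_0^{M,N}=0$. By the discrete It\^o isometry, \eqref{eq:E-tau-N-estimate}, Assumption \ref{assum:noise-term} and Gaussianity (which turns all $L^p(\Omega)$ norms into $L^2(\Omega)$ ones) one gets, exactly as for the spatial semi-discretisation, $\sup_{N,M}\sup_{m}\|\mathcal{O}_{t_m}^{M,N}\|_{L^p(\Omega,\dot H^{\gamma})}<\infty$ for every $\gamma\in[0,3)$, together with the smallness of the increments $\sup_{N,M}\sum_{m=1}^{M}\|\mathcal{O}_{t_m}^{M,N}-\mathcal{O}_{t_{m-1}}^{M,N}\|_{L^p(\Omega,L^4)}^{4}<\infty$. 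Setting $Y_{t_m}^{M,N}:=X_{t_m}^{M,N}-\mathcal{O}_{t_m}^{M,N}$, it then suffices to bound $\sup_m\|Y_{t_m}^{M,N}\|_{L^p(\Omega,\dot H^2)}$, where $Y^{M,N}$ obeys the \emph{pathwise} recursion $Y_{t_m}^{M,N}-Y_{t_{m-1}}^{M,N}+\tau A^2 Y_{t_m}^{M,N}+\tau P_N A F(Y_{t_m}^{M,N}+\mathcal{O}_{t_m}^{M,N})=0$ with $Y_0^{M,N}=P_N X_0$.

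Next I would derive uniform low-order moment bounds by a discrete energy estimate mimicking the $\dot H^{-1}$-gradient-flow structure of the Cahn--Hilliard equation. Testing the pathwise recursion with $A^{-1}(Y_{t_m}^{M,N}-Y_{t_{m-1}}^{M,N})$, using the identity $\langle a-b,a\rangle=\tfrac12(\|a\|^2-\|b\|^2+\|a-b\|^2)$ in $\dot H^{-1}$, the one-sided condition \eqref{eq:one-side-condition} (equivalently $f'\ge-1$, which yields $\langle F(u),u-w\rangle\ge\int_{\mathbf{D}}(\Psi(u)-\Psi(w))\,\dd x-\tfrac12\|u-w\|^2$ with $\Psi(r):=\tfrac14 r^4-\tfrac12 r^2$), and Young's inequality for the cross-terms produced by the increments of $\mathcal{O}^{M,N}$, one reaches a discrete energy inequality for $\mathcal{E}(u_m)$, where $u_m:=Y_{t_m}^{M,N}+\mathcal{O}_{t_m}^{M,N}$ and $\mathcal{E}(v):=\tfrac12|v|_1^2+\int_{\mathbf{D}}(\tfrac14 v^4-\tfrac12 v^2)\,\dd x\ge\tfrac12|v|_1^2+\tfrac18\|v\|_{L^4}^4-C$. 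A discrete Gr\"onwall argument (using $\tau M=T$, and taking care that the Gr\"onwall constant not degenerate as $\tau\to0$) together with $\mathcal{E}(P_N X_0)<\infty$ (valid by \eqref{eq:X0}, since $X_0\in\dot H^4\subset\dot H^1\cap L^4$) then gives $\sup_{N,M}\sup_m\|X_{t_m}^{M,N}\|_{L^p(\Omega,\dot H^1)}<\infty$, and hence $\sup_{N,M}\sup_m\|X_{t_m}^{M,N}\|_{L^p(\Omega,L^6)}<\infty$ by the embedding $\dot H^1\subset L^6$ valid for $d\le3$.

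Finally I would bootstrap to $\dot H^2$ through the mild form. Fixing a small $\beta\in(0,\tfrac12)$ and writing $A Y_{t_m}^{M,N}=E_{\tau,N}^m A X_0-\tau\sum_{j=1}^{m}A^{2-\beta}E_{\tau,N}^{m-j+1}\big(A^{\beta}F(X_{t_j}^{M,N})\big)$, estimate \eqref{eq:E-tau-N-estimate} shows $\|A^{2-\beta}E_{\tau,N}^{k}\|_{\mathcal{L}}\le C(k\tau)^{-(2-\beta)/2}$, a weakly singular but summable discrete kernel ($\tau\sum_{k=1}^{m}(k\tau)^{-(2-\beta)/2}\le C T^{\beta/2}$). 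Interpolation together with Sobolev and product inequalities (in the spirit of \eqref{eq:norm-algebra} and Lemma \ref{lemma;F}) supplies, for $d\le3$, a bound of the form $|F(x)|_{2\beta}\le C(1+\|x\|_{L^6}^3)+C\|x\|_{L^6}^{3-2\beta}|x|_2^{2\beta}$, in which $|x|_2$ enters only \emph{sublinearly} --- the exponent $2\beta<1$ being the crucial point. Substituting $|X_{t_j}^{M,N}|_2\le\|A Y_{t_j}^{M,N}\|+|\mathcal{O}_{t_j}^{M,N}|_2$, applying Young's inequality to detach a term $\eta\|A Y_{t_j}^{M,N}\|$ from the sublinear contribution, and bounding everything else uniformly in $L^p(\Omega)$ via the previous two steps, one arrives at $\|A Y_{t_m}^{M,N}\|_{L^p(\Omega)}\le\widetilde R+C\eta\,\tau\sum_{j=1}^{m}(t_{m-j+1})^{-(2-\beta)/2}\|A Y_{t_j}^{M,N}\|_{L^p(\Omega)}$ with $\widetilde R$ independent of $N,M,m$; a scalar weakly singular discrete Gr\"onwall inequality then gives $\sup_{N,M}\sup_m\|Y_{t_m}^{M,N}\|_{L^p(\Omega,\dot H^2)}<\infty$, and combining with the $\dot H^2$-bounds for $\mathcal{O}^{M,N}$ from the first step yields \eqref{eq:moment-bounds-full}.

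The hard part will be this last bootstrap together with keeping every constant independent of \emph{both} $N$ and $\tau=T/M$: because the unbounded operator $A$ stands in front of the non-monotone $F$, and because the cubic nonlinearity is critical for the three-dimensional Sobolev embeddings, the $\dot H^2$ bound cannot be obtained in one stroke --- one is forced first to extract the lower-order (energy) bounds and only then to exploit the sublinear fractional estimate for $F$, so that the weakly singular Gr\"onwall inequality closes uniformly in the discretisation parameters. A secondary delicate point is the uniform-in-$\tau$ bookkeeping inside the discrete energy inequality of the second step.
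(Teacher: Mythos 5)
Your route is genuinely different from the paper's. The paper does not prove the low-order bounds at all: it quotes \cite{qi2021existence} (Theorem 4.1 there) for $\sup_{N,m}\|X^{M,N}_{t_m}\|_{L^p(\Omega,\dot H^{\eta})}<\infty$ with $\eta\in(\tfrac32,2)$, and then performs a \emph{one-shot} bootstrap in the mild form \eqref{eq:full-discrete-method-2}: the drift part is estimated by $\tau\sum_j t_{m-j+1}^{-3/4}\|PF(X^{M,N}_{t_j})\|_{L^p(\Omega,\dot H^1)}$ via \eqref{eq:E-tau-N-estimate}, with $\|PF(x)\|_{\dot H^1}\lesssim |x|_1\|x\|_V^2$ and $\dot H^{\eta}\subset V$ supplying the $V$-norm, while the noise part is handled by the discrete Burkholder--Davis--Gundy inequality together with \eqref{eq:E-tau-N-2-estimate} and \eqref{eq:ass-AQ-condition}; no Gr\"onwall argument is needed at the $\dot H^2$ level. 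Your Step 3 is a legitimate alternative to this bootstrap: the interpolation bound $\|x^3\|_{H^{2\beta}}\lesssim\|x\|_{L^6}^{3}+\|x\|_{L^6}^{3-2\beta}\|x\|_{H^2}^{2\beta}$ (interpolating between $\|x^3\|_{L^2}$ and $\|x^3\|_{H^1}\lesssim\|x\|_{L^6}^2\|x\|_{H^2}$) is correct, and with the summable kernel $t^{-(2-\beta)/2}$ and a weakly singular discrete Gr\"onwall inequality it closes while demanding only $L^6$-moment bounds as input, i.e.\ strictly less than the $\dot H^{\eta}$, $\eta>\tfrac32$, input the paper uses. Step 1 (uniform $\dot H^{\gamma}$, $\gamma<3$, moments of $\mathcal O^{M,N}$) is routine.

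The genuine gap is Step 2, which is exactly the part the paper outsources to \cite{qi2021existence} and which you treat as if it were a standard energy argument. Testing the pathwise recursion for $Y^{M,N}$ with $A^{-1}(Y^{M,N}_{t_m}-Y^{M,N}_{t_{m-1}})$ produces, besides the telescoping energy, the term $\sum_{m}\langle F(u_m),\mathcal O^{M,N}_{t_m}-\mathcal O^{M,N}_{t_{m-1}}\rangle$ with $u_m=Y^{M,N}_{t_m}+\mathcal O^{M,N}_{t_m}$, and this term carries \emph{no} factor $\tau$. The mechanism you advertise (Young's inequality plus discrete Gr\"onwall, fed by $\sum_m\|\Delta\mathcal O^{M,N}_m\|_{L^4}^4<\infty$) does not close: bounding $\|u_m\|_{L^4}^3\|\Delta\mathcal O^{M,N}_m\|_{L^4}\le\delta\|u_m\|_{L^4}^4+C\delta^{-3}\|\Delta\mathcal O^{M,N}_m\|_{L^4}^4$ with $\delta$ constant yields a Gr\"onwall factor of order $(1+c\delta)^{T/\tau}$, which degenerates as $\tau\to0$, whereas choosing $\delta\sim\tau$ makes the complementary term of size $\tau^{-3}\|\Delta\mathcal O^{M,N}_m\|_{L^4}^4\sim\tau^{-1}$ per step, so its sum blows up like $\tau^{-2}$; a discrete H\"older split between $\tau\sum\|u_m\|_{L^4}^4$ and $\tau^{-3}\sum\|\Delta\mathcal O^{M,N}_m\|_{L^4}^4$ fails for the same reason. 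Closing this estimate requires an additional idea --- e.g.\ Abel summation to move the increment onto $F(u_m)-F(u_{m-1})$ and absorb the resulting differences into the dissipation terms $\tau^{-1}|Y_m-Y_{m-1}|_{-1}^2$ and $|Y_m-Y_{m-1}|_1^2$, combined with the preliminary $\dot H^{-1}$-level estimate (test with $A^{-1}Y_m$, where every term does carry a $\tau$ and which gives $\sup_m|Y_m|_{-1}$ and $\tau\sum_m(|Y_m|_1^2+\|u_m\|_{L^4}^4)$), or a mild-form bootstrap from that lower level --- none of which appears in your sketch; labelling this ``a secondary delicate point'' understates that it is the crux. If instead you simply invoke \cite{qi2021existence} for the $L^6$ (or $\dot H^{\eta}$) bounds, as the paper does, your Steps 1 and 3 do constitute a correct, and in the bootstrap step more self-reliant, proof of \eqref{eq:moment-bounds-full}.
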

\begin{proof}
Firstly, by the proof in \cite[Theorem 4.1]{qi2021existence}, we have for $\eta \in (\frac32,2)$ and all $ p \geq 1$,
\begin{equation}
\sup_{N \in \N}\sup_{m \in \{0,1,\cdots,M\} }
\|X^{M,N}_{t_{m}}\|_{L^p(\Omega,\dot{H}^{\eta})} < \infty.
\end{equation}
Next, from \eqref{eq:E-tau-N-estimate}, the Burkholder-Davis-Gundy-type inequality, \eqref{eq:E-tau-N-2-estimate}, \eqref{eq:ass-AQ-condition}, \eqref{eq:X0} and Sobolev embedding inequality
$\dot{H}^{\eta} \subset V$, it follows  that
\begin{equation}
\begin{split}
\sup_{N \in \N} & \sup_{m \in \{0,1,\cdots,M\} }
  \|X^{M,N}_{t_{m}}\|_{L^p(\Omega,\dot{H}^2)}
  \\ &  \leq  \|X_0\|_{L^p(\Omega,\dot{H}^2)}
     + \tau \sup_{N \in \N}\sup_{m \in \{0,1,\cdots,M\} }
        \sum_{j=1}^m t_{m-j+1}^{-\frac34}
        \|P F (X_{t_j}^{M,N})\|_{L^p(\Omega,\dot{H}^1)}
  \\ & \quad + \sup_{N \in \N}\sup_{m \in \{0,1,\cdots,M\} }
     \Big( \tau \sum_{j=1}^m
        \big\| A^{\frac32} E_{\tau,N}^{m-j+1} Q^{\frac12}
           \big\|_{\mathcal{L}_2}^2 \Big)^{\frac12}
  \\ & \leq C \Big(
       1 + \|A^{\frac12}Q^{\frac12}\|_{\mathcal{L}_2}^2
      +  \tau \sup_{N \in \N}\sup_{m \in \{0,1,\cdots,M\} }
      \sum_{j=1}^{m} t_{m-j+1}^{-\frac34}
      \Big\| |X_{t_j}^{M,N}|_1 ~ \|X_{t_j}^{M,N}\|_V^2
        \Big\|_{L^p(\Omega,\R)}\Big)
  \\ & \leq C \Big( 1 + \sup_{m \in \{0,1,\cdots,M\} }
     \tau   \sum_{j=1}^{m} t_{m-j+1}^{-\frac34}
    \sup_{N \in \N}\sup_{j \in \{0,1,\cdots,M\} }
    \|X_{t_j}^{M,N}\|_{L^{3p}(\Omega,\dot{H}^{\eta})} \Big)
  \\ & < \infty.
\end{split}
\end{equation}
This completes the proof.
\end{proof}

Before presenting the main theorem,  we introduce the notation
$\lfloor s \rfloor:=\max\{0,\tau,\cdots,m\tau,\cdots \}
	\cap [0,s] $,
$	\lceil s \rceil:=\min\{0,\tau,\cdots,m\tau,\cdots \}
	\cap [s,T] $
and $[s]:=\frac {\lfloor s \rfloor} {\tau}$.
The fully discrete approximation operator is then defined by
\begin{equation}
	\Psi_{\tau}^{M,N} (t) := E(t) P_N - E_{\tau,N}^k,
	\quad t \in [t_{k-1} , t_k ), \quad k \in \{1,2,\cdots,M\}.
\end{equation}
The following lemma of the fully discrete approximation operator plays a pivotal role in the weak convergence analysis.
\begin{lemma}\label{lem;estimate-fully-approximation-operator}
	Under Assumption \ref{assum:linear-operator-A},
	we have the following statements.
	\begin{enumerate}
		\item[(i)] Let $\rho \in [0,4]$,
              there exists a constant $C$ such that for $t > 0$,
		\begin{equation}
			\| \Psi_{\tau}^{M,N} (t) u \|
			\leq C \,  t^{-\frac{\rho}{4}} \, |u|_{- \rho}, \,
      u \in \dot{H}^{-\rho}.
		\end{equation}
		
        \item[(ii)] Let $\beta \in [0,4]$, there exists a constant $C$ such that for $t > 0$,
		\begin{equation}
			\| \Psi_{\tau}^{M,N} (t) u \|
			\leq C \,  \tau^{\frac{\beta}{4}} \, |u|_{\beta},
          \, u \in \dot{H}^{\beta}.
		\end{equation}
		
          \item[(iii)] Let $\alpha\in [0,4]$, there exists a constant $C$ such that for $t > 0$,
		\begin{equation}
			\| \Psi_{\tau}^{M,N} (t) u \|
			\leq C \, \tau^{\frac{4-\alpha} 4} \, t^{-1}
       \, |u|_{- \alpha}, \, u \in \dot{H}^{-\alpha}.
		\end{equation}
		
         \item[(iv)] Let $\mu \in [0,4]$, there exists a constant $C$ such
           that for $t > 0$,
		\begin{equation}
			\| \Psi_{\tau}^{M,N} (t) u \|
			\leq C \, \tau \cdot t^{-\frac{4-\mu} 4} \,
			|u|_{\mu }, \, u \in \dot{H}^{\mu }.
		\end{equation}
		
         \item[(v)] Let $\nu \in [0,4]$, there exists a constant $C$ such that for $t > 0$,
		\begin{equation}
			\Big( \int_0^t
			\| \Psi_{\tau}^{M,N} (s) u \|^2 \mathrm{d} s \Big)^{\frac12}
			\leq C \, \tau^{\frac{\nu} 4}
			|u|_{\nu - 2},\, u \in \dot{H}^{\nu - 2}.
		\end{equation}

        \item[(vi)] Let $\delta\in [0,4]$, there exists a constant $C$ such that for $t > 0$,
		\begin{equation}
			\Big\| \int_0^t \Psi_{\tau}^{M,N} (s) u \mathrm{d} s \Big\|
			\leq C \, \tau^{\frac{4-\delta} 4}
			|u|_{- \delta},\, u \in \dot{H}^{ - \delta}.
		\end{equation}
	\end{enumerate}
\end{lemma}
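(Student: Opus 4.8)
The plan is to diagonalise. Since $A$, $A^{2}$, $P_{N}$ and $E_{\tau,N}^{k}=(I+\tau A^{2})^{-k}P_{N}$ all act diagonally in the orthonormal basis $\{e_{j}\}$, the operator $\Psi_{\tau}^{M,N}(t)$ is, for $t\in[t_{k-1},t_{k})$ and writing $x=\lambda_{j}^{2}$, the Fourier multiplier with symbol $g_{k}(t,x):=e^{-tx}-(1+\tau x)^{-k}$, $k=[t]+1$. By Parseval's identity each of the six operator bounds reduces to a scalar bound on $g_{k}$, uniform over $x\in\{\lambda_{1}^{2},\dots,\lambda_{N}^{2}\}$, $k\ge1$, $\tau>0$: e.g.\ (ii) is $|g_{k}(t,x)|\le C(\tau x)^{\beta/4}$, (i) is $|g_{k}(t,x)|\le C(tx)^{-\rho/4}$, (iii)--(iv) are $|g_{k}(t,x)|\le C\tau^{(4-\alpha)/4}t^{-1}x^{-\alpha/4}$ resp.\ $\le C\tau\,t^{-(4-\mu)/4}x^{\mu/4}$, (v) is $\int_{0}^{t}g_{k(s)}(s,x)^{2}\,\mathrm{d}s\le C\tau^{\nu/2}x^{(\nu-2)/2}$, and (vi) is $\bigl|\int_{0}^{t}g_{k(s)}(s,x)\,\mathrm{d}s\bigr|\le C\tau^{(4-\delta)/4}x^{-\delta/4}$. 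Thus the whole lemma is reduced to a short list of scalar facts about $g_{k}$.

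The two scalar facts I would establish are: $(\mathrm{a})$ the unconditional pointwise bounds $|g_{k}(t,x)|\le C\min\{1,\tau x\}$ and $|g_{k}(t,x)|\le C\min\{1,(tx)^{-1}\}$; and $(\mathrm{b})$ the refinement $|g_{k}(t,x)|\le C\,\tau x\,e^{-tx/4}$ when $\tau x\le1$. The second half of $(\mathrm{a})$ is immediate since $e^{-tx}$ and $(1+\tau x)^{-k}$ are separately $\le\min\{1,(tx)^{-1}\}$ (using $(1+\tau x)^{-k}\le(t_{k}x)^{-1}$ and $t<t_{k}$). For the first half of $(\mathrm{a})$ and for $(\mathrm{b})$ I would use the elementary inequalities $1+y\le e^{y}$ (so $(1+\tau x)^{-1}\ge e^{-\tau x}$ and $(1+\tau x)^{-k}$ decreasing in $k$), $e^{-y}\ge1-y$, and $\tfrac{y}{1+y}\le\ln(1+y)\le y$, via the splitting $g_{k}(t,x)=(e^{-tx}-e^{-t_{k}x})+(e^{-t_{k}x}-(1+\tau x)^{-k})$ and the telescoping identity $(1+z)^{-k}-e^{-kz}=\sum_{i=0}^{k-1}(1+z)^{-(k-1-i)}e^{-iz}[(1+z)^{-1}-e^{-z}]$ together with $0\le(1+z)^{-1}-e^{-z}\le z^{2}/(1+z)$. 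I would also record the trivial time integrals $\int_{0}^{t}e^{-sx/2}\,\mathrm{d}s\le C/x$ and $\tau\sum_{k}(1+\tau x)^{-2k}\le C/(\tau x^{2})$, which are just the restrictions of \eqref{III-spatial-temporal-S(t)} and \eqref{eq:E-tau-N-2-estimate}.

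With these in hand the six parts come out routinely. Parts (i), (ii) are interpolation of $(\mathrm{a})$ between the exponents $0$ and $4$. Parts (iii), (iv), which must carry a $\tau$-power and a negative power of $t$ simultaneously, follow by combining $(\mathrm{b})$ with $e^{-tx/4}\le C(tx)^{-\gamma}$ on the regime $\tau x\le1$, and on $\tau x>1$ by using $|g_{k}|\le C\min\{1,(tx)^{-1}\}$ together with $\tau>1/x$ (which makes the claimed $\tau$-power automatic). Part (v) follows by squaring and integrating: on $\{\tau x\le1\}$ use $\int_{0}^{t}(\tau x\,e^{-sx/4})^{2}\,\mathrm{d}s\le C\tau^{2}x$, on $\{\tau x>1\}$ use $\int_{0}^{t}(e^{-2sx}+(1+\tau x)^{-2k(s)})\,\mathrm{d}s\le C/x$, then interpolate. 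For part (vi) the cleanest route is the exact identity obtained by integrating the symbol termwise,
\begin{equation*}
\int_{0}^{t_{m}}g_{k(s)}(s,x)\,\mathrm{d}s=-x^{-1}\bigl(e^{-t_{m}x}-(1+\tau x)^{-m}\bigr),\qquad\text{i.e.}\qquad\int_{0}^{t_{m}}\Psi_{\tau}^{M,N}(s)\,\mathrm{d}s=-A^{-2}\bigl(E(t_{m})P_{N}-E_{\tau,N}^{m}\bigr),
\end{equation*}
so that $\bigl\|\int_{0}^{t_{m}}\Psi_{\tau}^{M,N}(s)u\,\mathrm{d}s\bigr\|=\bigl\|(E(t_{m})P_{N}-E_{\tau,N}^{m})(A^{-2}u)\bigr\|\le C\tau^{(4-\delta)/4}|A^{-2}u|_{4-\delta}=C\tau^{(4-\delta)/4}|u|_{-\delta}$ by part (ii) (in the limit $t\uparrow t_{m}$) with $\beta=4-\delta$; for a general $t$ one adds $\int_{\lfloor t\rfloor}^{t}(E(s)P_{N}-E_{\tau,N}^{[t]+1})u\,\mathrm{d}s$, whose $j$-th mode is bounded by $\min\{\tau,\lambda_{j}^{-2}\}\le\tau^{(4-\delta)/4}\lambda_{j}^{-\delta/2}$, so that the $L^{2}(\dot H^{-\delta})$-norm of the residual is $\le C\tau^{(4-\delta)/4}|u|_{-\delta}$ as well.

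The main obstacle is precisely establishing fact $(\mathrm{b})$ — the exponential-decay refinement of the comparison estimate — with a constant uniform in $k$ and $N$: the crude $|g_{k}|\le C\min\{1,\tau x\}$ alone does not suffice for (iii)--(iv), and the corner case $k=1$ (where $t_{k-1}=0$ and the factor $t^{-1}$ blows up on the short interval $[0,\tau)$) has to be treated by hand. Conceptually nothing here is deep — this is the discrete counterpart of the classical rational-approximation estimates for $A$-stable one-step schemes, specialised to the backward-Euler symbol $(1+y)^{-1}$ — but it accounts for essentially all of the technical bookkeeping in the lemma.
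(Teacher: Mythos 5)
Your proposal is correct, and it is genuinely more self-contained than what the paper does: the paper's own ``proof'' of this lemma consists of citing \cite[Lemma 5.3]{qi2021existence} for parts (i), (ii), (iii), (v), (vi), and then obtaining (iv) by interpolating between (iii) with $\alpha=0$ (giving $C\,\tau\,t^{-1}\|u\|$) and (ii) with $\beta=4$ (giving $C\,\tau\,|u|_4$). You instead prove everything from scratch by diagonalizing in the eigenbasis and reducing to scalar bounds on the backward-Euler symbol $g_k(t,x)=e^{-tx}-(1+\tau x)^{-k}$; your facts (a) and (b) check out (the telescoping identity with $0\le (1+z)^{-1}-e^{-z}\le z^2/(1+z)$, together with $(1+z)^{-1}\le e^{-z/2}$ for $z\le 1$ from $\ln(1+z)\ge z/(1+z)$, indeed gives $|g_k|\le C\,\tau x\,e^{-tx/4}$ when $\tau x\le1$), and each of the six parts follows from them as you indicate; in particular your exact identity $\int_0^{t_m}\Psi_\tau^{M,N}(s)\,\mathrm{d}s=-A^{-2}\bigl(E(t_m)P_N-E_{\tau,N}^m\bigr)$ for (vi) is a clean way to get that estimate, and your direct derivation of (iv) from fact (b) replaces the paper's interpolation argument. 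The one spot you leave open, the corner case $t\in(0,\tau)$ in (iii), is not actually delicate: there $k=1$ and fact (a) alone suffices, since $|g_1|\le C\min\{1,(tx)^{-1}\}=C\,t^{-1}\min\{t,x^{-1}\}\le C\,t^{-1}\min\{\tau,x^{-1}\}\le C\,t^{-1}\tau^{(4-\alpha)/4}x^{-\alpha/4}$. What the paper's route buys is brevity (two lines, delegating the rational-approximation estimates to the companion paper); what yours buys is a verifiable, self-contained argument of the standard ``A-stable rational approximation of the semigroup'' type, which is essentially what the cited lemma encapsulates.
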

\begin{proof}
	Elementary fact in \cite[Lemma 5.3]{qi2021existence} yields   (i), (ii), (iii), (v) and (vi).
	We then use the standard interpolation argument to prove   (iv).
	For $\mu=0$, it is a consequence of (iii) with $\alpha=0$ and for $\mu=4$,
    it is a consequence of  (ii) with $\beta=4$.
\end{proof}

For clarity of exposition, we denote
$\mathcal{O}_T^{M,N}:=\sum_{j=1}^{M} E_{\tau,N}^{M-j+1} \Delta W_j =\int_0^T E_{\tau,N}^{M-[s]} \dd W(s).$
The next lemma gives the estimate between $\mathcal{O}_{t_m}^N$ and $\mathcal{O}_{t_m}^{M,N}$.
\begin{lemma}\label{lem:ON-OMN}
	Under Assumptions \ref{assum:linear-operator-A} and \ref{assum:noise-term},  we have
	for $p \geq 1$,
	\begin{equation}
		\sup_{m \in \{1,2,\cdots,M\}}
		\big\| \mathcal{O}_{t_m}^{M,N}
		- \mathcal{O}_{t_m}^{N} \big\|_{L^p(\Omega,\dot{H}^{-\beta})}
		\leq C \, \tau^{\frac{3+\beta}4}, \, \beta \in [-3,1].
	\end{equation}
\end{lemma}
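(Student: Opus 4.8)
The plan is to express the difference $\mathcal{O}_{t_m}^{M,N} - \mathcal{O}_{t_m}^{N}$ as a single It\^o integral whose kernel is exactly the fully discrete approximation operator $\Psi_\tau^{M,N}$, and then to combine the Burkholder--Davis--Gundy inequality with the space--time smoothing estimate in Lemma~\ref{lem;estimate-fully-approximation-operator}(v). Writing both the discrete and the continuous stochastic convolutions in integral form, $\mathcal{O}_{t_m}^{M,N} = \int_0^{t_m} E_{\tau,N}^{m-[s]}\,\dd W(s)$ and $\mathcal{O}_{t_m}^{N} = \int_0^{t_m} E(t_m-s)P_N\,\dd W(s)$, one checks that for a.e.\ $s\in(0,t_m)$, say $s\in(t_{j-1},t_j)$, one has $\lfloor s\rfloor=t_{j-1}$, hence $m-[s]=m-j+1$, while $t_m-s\in(t_{m-j},t_{m-j+1})\subset[t_{m-j},t_{m-j+1})$; therefore by the definition of $\Psi_\tau^{M,N}$ we get $E_{\tau,N}^{m-[s]}-E(t_m-s)P_N=-\Psi_\tau^{M,N}(t_m-s)$, and consequently $\mathcal{O}_{t_m}^{M,N} - \mathcal{O}_{t_m}^{N} = -\int_0^{t_m}\Psi_\tau^{M,N}(t_m-s)\,\dd W(s)$.

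Next I would apply the Burkholder--Davis--Gundy inequality in the Hilbert space $\dot{H}^{-\beta}$; since the integrand is deterministic, the outer expectation on the right-hand side drops out and we obtain, for every $p\ge1$,
\[
\big\| \mathcal{O}_{t_m}^{M,N} - \mathcal{O}_{t_m}^{N} \big\|_{L^p(\Omega,\dot{H}^{-\beta})}^2 \le C \int_0^{t_m} \big\| \Psi_\tau^{M,N}(t_m-s)\,Q^{\frac12} \big\|_{\mathcal{L}_2(\dot{H},\dot{H}^{-\beta})}^2 \,\dd s .
\]
Because $A$ commutes with $P_N$, $E(t)$ and $E_{\tau,N}^k$, one has $\|\Psi_\tau^{M,N}(r)v\|_{-\beta}=\|\Psi_\tau^{M,N}(r)A^{-\beta/2}v\|$; changing variables $r=t_m-s$ and expanding the Hilbert--Schmidt norm over the orthonormal basis $\{e_i\}$ of $\dot{H}$ turns the right-hand side into $C\sum_i\int_0^{t_m}\big\|\Psi_\tau^{M,N}(r)A^{-\beta/2}Q^{\frac12}e_i\big\|^2\,\dd r$.

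Now for $\beta\in[-3,1]$ put $\nu:=3+\beta\in[0,4]$, so that $\nu-2-\beta=1$. Applying Lemma~\ref{lem;estimate-fully-approximation-operator}(v) with this $\nu$ to the vector $v=A^{-\beta/2}Q^{\frac12}e_i$ gives $\int_0^{t_m}\|\Psi_\tau^{M,N}(r)A^{-\beta/2}Q^{\frac12}e_i\|^2\,\dd r\le C\,\tau^{(3+\beta)/2}\,|A^{-\beta/2}Q^{\frac12}e_i|_{\nu-2}^2 = C\,\tau^{(3+\beta)/2}\,|Q^{\frac12}e_i|_1^2$. Summing over $i$ and invoking Assumption~\ref{assum:noise-term} yields $\big\| \mathcal{O}_{t_m}^{M,N} - \mathcal{O}_{t_m}^{N} \big\|_{L^p(\Omega,\dot{H}^{-\beta})}^2 \le C\,\tau^{(3+\beta)/2}\,\|A^{1/2}Q^{1/2}\|_{\mathcal{L}_2}^2$, uniformly in $m\in\{1,\dots,M\}$, which is the assertion; the case $1\le p<2$ follows from the case $p=2$ by Jensen's inequality.

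The only genuinely substantive step is the first one: once the difference is recognized as a single stochastic convolution against $\Psi_\tau^{M,N}$, the problem collapses onto the deterministic smoothing estimate of Lemma~\ref{lem;estimate-fully-approximation-operator}(v), and the admissible range $\beta\in[-3,1]$ is precisely what makes the exponent $\nu=3+\beta$ lie in $[0,4]$ there. I do not expect any real obstacle beyond bookkeeping; the one point to handle with care is the half-open interval convention in the definition of $\Psi_\tau^{M,N}$, which only affects the integrand on a set of $s$ of measure zero and is therefore harmless.
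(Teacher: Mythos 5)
Your proposal is correct and follows essentially the same route as the paper: writing the difference as a single stochastic integral with kernel $\Psi_\tau^{M,N}(t_m-s)$, applying the Burkholder--Davis--Gundy inequality, and invoking Lemma \ref{lem;estimate-fully-approximation-operator}(v) with $\nu=3+\beta$ so that the bound reduces to $\|A^{\frac12}Q^{\frac12}\|_{\mathcal{L}_2}<\infty$ from Assumption \ref{assum:noise-term}. The only difference is that you spell out the indexing identification $E_{\tau,N}^{m-[s]}-E(t_m-s)P_N=-\Psi_\tau^{M,N}(t_m-s)$ and the commutation with $A^{-\beta/2}$ explicitly, which the paper leaves implicit.
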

\begin{proof}
	The Burkholder-Davis-Gundy  inequality and (v) in Lemma \ref{lem;estimate-fully-approximation-operator} with
	$\nu=3+\beta$  yield
	\begin{equation}
		\begin{split}
			\big\| \mathcal{O}_{t_m}^{M,N}
		- \mathcal{O}_{t_m}^{N} \big\|_{L^p(\Omega,\dot{H}^{-\beta})}
			& \leq C ~\Big(
			 \int_{0}^{t_m}
			\| \Psi_{\tau}^{M,N} (t_m -s)
			A^{-\frac{\beta}2} Q^{\frac12} \|_{\mathcal{L}_2}^2 \dd s
			\Big)^{\frac12}
			\\& \leq C \, \tau^{\frac{3+\beta}4}
			\|A^{\frac12} Q^{\frac12} \|_{\mathcal{L}_2}
			 \leq C \, \tau^{\frac{3+\beta}4}.
		\end{split}
	\end{equation}
	This finishes the proof.
\end{proof}

The following theorem shows the weak convergence rate of the
temporal semi-discretization.
\begin{theorem}[Weak convergence rate of the temporal approximation]
\label{th:order-time}
	Suppose Assumptions \ref{assum:linear-operator-A}-\ref{assum:intial-value-data} are satisfied.
	Let $X^N(T)$ and $X_T^{M,N}$ be given by \eqref{eq:space-mild} and \eqref{eq:full-discrete-method-2}, respectively.
	Then, we have for $\Phi \in C_b^2$,
	\begin{equation}
		\big| \E [\Phi(X^N(T))] -  \E [\Phi(X_T^{M,N})] \big|
		\leq C \, \tau.
	\end{equation}
\end{theorem}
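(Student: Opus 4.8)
The plan is to follow the architecture of the proof of Theorem~\ref{thm:weak-space}, systematically replacing the continuous objects by their backward--Euler analogues and the semigroup bounds \eqref{I-spatial-temporal-S(t)}--\eqref{IV-spatial-temporal-S(t)} by Lemma~\ref{lem;estimate-fully-approximation-operator}. Writing $\bar{X}^N(t)=X^N(t)-\mathcal{O}^N_t$ and $\bar{X}^{M,N}_{t_m}=X^{M,N}_{t_m}-\mathcal{O}^{M,N}_{t_m}$ and inserting $\mathcal{O}^N_T$ as an intermediate noise, I would split, exactly as in \eqref{full:separation},
\[
\E[\Phi(X^N(T))]-\E[\Phi(X^{M,N}_T)]=J_1+J_2,
\]
where $J_1=\E[\Phi(\bar{X}^N(T)+\mathcal{O}^N_T)]-\E[\Phi(\bar{X}^{M,N}_T+\mathcal{O}^N_T)]$ encodes the error of the nonlinear dynamics and $J_2=\E[\Phi(\bar{X}^{M,N}_T+\mathcal{O}^N_T)]-\E[\Phi(X^{M,N}_T)]$ is the pure temporal noise--discretisation term. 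The term $J_2$ is the cleaner one, so I would dispose of it first.

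For $J_2$, since $\mathcal{O}^N_T-\mathcal{O}^{M,N}_T=\int_0^T\Psi_\tau^{M,N}(T-s)\,\dd W(s)$, Taylor expanding $\Phi$ about $X^{M,N}_T$ produces a quadratic remainder bounded, via $\Phi''\in C_b$ and Lemma~\ref{lem:ON-OMN} ($\beta=0$), by $C\,\E\|\mathcal{O}^N_T-\mathcal{O}^{M,N}_T\|^2\le C\tau^{3/2}$, plus a first--order term $\E[\Phi'(X^{M,N}_T)(\mathcal{O}^N_T-\mathcal{O}^{M,N}_T)]$. The latter is treated exactly as \eqref{eq:space-rate-Malliavin}: by the Malliavin integration by parts formula \eqref{Malliavin integration by parts} and the chain rule it equals $\E\int_0^T\langle\Psi_\tau^{M,N}(T-s),\Phi''(X^{M,N}_T)\mathcal{D}_sX^{M,N}_T\rangle_{\mathcal{L}_2^0}\,\dd s$, hence is at most $C\,\|A^{\frac12}Q^{\frac12}\|_{\mathcal{L}_2}\,\sup_{s\le T}\|\mathcal{D}_sX^{M,N}_T\|_{L^1(\Omega,\mathcal{L}_2^0)}\int_0^T\|\Psi_\tau^{M,N}(T-s)A^{-\frac12}\|_{\mathcal{L}}\,\dd s$; Lemma~\ref{lem;estimate-fully-approximation-operator}(iv) with $\mu=1$ gives $\|\Psi_\tau^{M,N}(t)A^{-\frac12}\|_{\mathcal{L}}\le C\tau\,t^{-3/4}$, so the remaining integral is $\le C\tau$. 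This step requires the fully discrete counterpart of Proposition~\ref{Estimate of Malliavin derivative of the solution}, namely $\sup_N\sup_{s\le T}\E\|\mathcal{D}_sX^{M,N}_T\|_{\mathcal{L}_2^0}^2<\infty$, which I would establish by running the two energy estimates (in $|\cdot|_{-1}$ and $|\cdot|$) on the discrete equation solved by $\mathcal{D}_s^yX^{M,N}_{t_m}$, using \eqref{eq:E-tau-N-estimate}--\eqref{eq:E-tau-N-2-estimate} and \eqref{eq:moment-bounds-full}. Thus $|J_2|\le C\tau$.

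For $J_1$, set $e_m:=\bar{X}^N(t_m)-\bar{X}^{M,N}_{t_m}$, so that $e_0=0$ and $\tfrac1\tau(e_m-e_{m-1})+A^2e_m+P_NA(F(X^N(t_m))-F(X^{M,N}_{t_m}))=\rho_m$, with consistency defect $\rho_m=\tfrac1\tau\int_{t_{m-1}}^{t_m}(\tfrac{\dd}{\dd t}\bar X^N(r)-\tfrac{\dd}{\dd t}\bar X^N(t_m))\,\dd r$. Mimicking the spatial proof, I would first run a discrete energy estimate: testing against $A^{-1}e_m$, using $\langle e_m-e_{m-1},A^{-1}e_m\rangle\ge\tfrac12(|e_m|_{-1}^2-|e_{m-1}|_{-1}^2)$, the one--sided bound \eqref{eq:one-side-condition} for the piece of $F(X^N(t_m))-F(X^{M,N}_{t_m})$ whose argument difference is $e_m$, the estimate \eqref{eq:F'--eta} together with Lemma~\ref{lem:ON-OMN} ($\beta=1$) and \eqref{eq:temporal-regularity-mild-Galerkin} for the piece whose argument difference is $\mathcal{O}^N_{t_m}-\mathcal{O}^{M,N}_{t_m}$, and measuring $\rho_m$ in $\dot H^{-3}$ so that its $F$--contribution is controlled by \eqref{eq:F'--eta} and the temporal regularity of $X^N$; discrete Gronwall then gives $\max_m|e_m|_{-1}^2+\tau\sum_m|e_m|_1^2\le C\tau^{\gamma}$ uniformly in $N$, the discrete analogue of \eqref{eq:int-e-1}. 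Next I would decompose $e_M$ through its mild representation into the initial term $\Psi_\tau^{M,N}(T)X_0$ (bounded by Lemma~\ref{lem;estimate-fully-approximation-operator}(iv) with $\mu=4$, using $|X_0|_4<\infty$), two temporal quadrature/consistency terms built from $\Psi_\tau^{M,N}(T-s)AP_NF(X^N(s))$ and $E_{\tau,N}^{M-[s]}AP_N(F(X^N(s))-F(X^N(\lceil s\rceil)))$ (bounded by Lemma~\ref{lem;estimate-fully-approximation-operator} together with the spatial regularity $F(X^N)\in\dot H^3$ — a consequence of \eqref{eq:spatial-regularity-mild-Galerkin} and the Banach--algebra property of $H^3(\mathbf{D})$, $d\le3$ — and \eqref{eq:temporal-regularity-mild-Galerkin}), and the recursive term $\tau\sum_jE_{\tau,N}^{M-j+1}AP_N(F(X^N(t_j))-F(X^{M,N}_{t_j}))$ (bounded by the energy estimate), closing with a discrete Gronwall inequality. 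Finally $\Phi$ is Taylor expanded, the quadratic remainder being of order $(\text{strong rate})^2$ and the linear term $\E[\Phi'(\cdot)e_M]$ estimated term by term along the same decomposition.

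The main obstacle is twofold. Keeping every discrete estimate uniform in $N$ is delicate but essentially routine, since $P_N$ commutes with $A$ and the constants in Lemma~\ref{lem;estimate-fully-approximation-operator} are $N$--independent. The genuinely new difficulty, absent in the spatial analysis, is that the terms in which the stochastic convolution is frozen in time inside the nonlinearity — typically $F(\bar X^N(t_j)+\mathcal{O}^N_s)-F(\bar X^N(t_j)+\mathcal{O}^N_{t_j})$ for $s\in(t_{j-1},t_j)$ — are only of strong order $\tau^{1/2}$, because $r\mapsto\mathcal{O}^N_r$ is merely $\tfrac12$--H\"older in \emph{every} Sobolev norm, in sharp contrast with the spatial truncation $\mathcal{O}-\mathcal{O}^N=(I-P_N)\mathcal{O}$, which is $O(\lambda_N^{-2})$ in $\dot H^{-1}$ thanks to the full spatial regularity of $\mathcal{O}$. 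Such terms cannot be absorbed by a strong bound if the rate $\tau$ is to be reached; instead, after writing $\mathcal{O}^N_s-\mathcal{O}^N_{t_j}$ as a backward It\^o increment plus the smoothing remainder $(I-E(t_j-s))\mathcal{O}^N_s$, they must be retained under the expectation against $\Phi'$ and processed by the Malliavin integration by parts formula, which recovers the missing half order. Propagating this gain through the discrete Gronwall argument, uniformly in $N$, is the crux of the proof.
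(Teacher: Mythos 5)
Your overall architecture (split off the noise-discretisation error, treat it with Malliavin integration by parts, and reduce the nonlinear-dynamics error to a strong estimate for the ``bar'' processes) matches the paper's, but there is a genuine gap in your treatment of $J_1$. You assert that the terms where the noise increment enters the nonlinearity, e.g.\ $F'(X^N(s))\int_s^{\lceil s\rceil}E(\lceil s\rceil-r)P_N\,\dd W(r)$, are only of strong order $\tau^{1/2}$ and therefore ``cannot be absorbed by a strong bound'', proposing instead to keep them under $\E[\Phi'(\cdot)\,\cdot]$ and apply Malliavin integration by parts, while admitting that propagating this through the discrete Gronwall argument is an unresolved ``crux''. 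This is precisely where your plan does not close: the increment is indeed only $O(\tau^{1/2})$ pointwise in $s$, but the paper obtains the full strong rate $\bigl\| \bar{X}^N(T)-\bar{X}_T^{M,N}\bigr\|_{L^2(\Omega,\dot H)}\le C\tau$ by a purely strong argument — the critical term ($K_{133}$ in the proof) is rewritten by the stochastic Fubini theorem as a stochastic integral and then estimated by the Burkholder--Davis--Gundy inequality, and the martingale cancellation over the subintervals supplies the missing half order (see the derivation leading to \eqref{eq:K2-first-term}), after which the energy/Gronwall step \eqref{eq:int-full-estimate} yields $\tau^2$ for $\tau\sum_j|e^{M,N}_{t_j}|_1^2$ and the mild-form decomposition $Err_1+Err_2+Err_3$ gives order $\tau$. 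No weak-error machinery is needed inside this part, and without the Fubini--BDG step your Gronwall-level bound would stall at order $\tau^{1/2}$, which then contaminates every subsequent term; your proposed fix via Malliavin calculus inside the error recursion is not carried out and is exactly the difficulty the paper's argument avoids.

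A second, smaller issue is your splitting itself: you add the \emph{continuous} convolution $\mathcal{O}^N_T$ in $J_1$, so the Taylor base point in $J_2$ becomes the fully discrete solution $X_T^{M,N}$, and the integration-by-parts step then needs a uniform bound on $\mathcal{D}_s X_T^{M,N}$ — a fully discrete analogue of Proposition \ref{Estimate of Malliavin derivative of the solution} that you only sketch. The paper sidesteps this by putting the discrete convolution $\mathcal{O}_T^{M,N}$ into the $K_1$ term, so that the expansion in $K_2$ is around $X^N(T)=\bar{X}^N(T)+\mathcal{O}_T^N$ and the already-proved bound on $\mathcal{D}_s X^N(T)$ suffices, combined with Lemma \ref{lem;estimate-fully-approximation-operator}(iv) (with $\mu=1$) and Lemma \ref{lem:ON-OMN}; your discrete energy-estimate plan for $\mathcal{D}_s^y X^{M,N}_{t_m}$ is plausible but is extra unproven work that the paper's decomposition renders unnecessary. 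Your consistency-defect formulation of the error equation, versus the paper's auxiliary process $Y^{M,N}_{t_m}$ built from $F(X^N(t_j))$, is a legitimate structural variant and not in itself a problem.
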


\begin{proof}
At first, we define
$\bar{X}_T^{M,N}= {X}_T^{M,N} - \mathcal{O}_T^{M,N}$
and separate the above error into
\begin{align}
\begin{split}
\E \Big[ \Phi (X^N(T)) \Big] - \E \Big[ \Phi (X_T^{M,N}) \Big]
	& =
 \Big( \E \big[ \Phi (\bar{X}^N(T) + \mathcal{O}_T^{M,N}) \big]- \E \big[ \Phi (\bar{X}_T^{M,N} + \mathcal{O}_T^{M,N}) \big] \Big)
	\\& \quad  +
    \Big( \E \big[ \Phi (\bar{X}^N(T) + \mathcal{O}_T^N) \big]
	- \E \big[ \Phi (\bar{X}^N(T) + \mathcal{O}_T^{M,N}) \big] \Big)
	\\&  =: K_1  + K_2.
\end{split}
\end{align}
To estimate $K_1$, it suffices to bound
$\big\| \bar{X}^N(T) - \bar{X}_T^{M,N} \big\|_{L^2(\Omega,\dot{H})}$ .
To this end, we introduce an auxiliary process $Y_{t_m}^{M,N}$ by
\begin{equation}
Y_{t_m}^{M,N} = E_{\tau,N}^m  X_0
    - \tau \sum_{j=1}^{m} E_{\tau,N}^{m-j+1} A  F(X^N(t_j))
	+ \sum_{j=1}^{m} E_{\tau,N}^{m-j+1}  \Delta W_j
\end{equation}
and define $\bar{Y}_{t_m}^{M,N} = Y_{t_m}^{M,N} - \mathcal{O}_{t_m}^{M,N}$.
Note that the application of an appropriate auxiliary process  was used in \cite{cui2021strongCHC,qi2020error} to deduce the strong convergence rates for the numerical approximations of
similar SPDEs.
Owning to  \eqref{eq:ass-AQ-condition}, \eqref{eq:X0}, \eqref{eq:spatial-regularity-mild-F(XN)}, \eqref{eq:E-tau-N-estimate}, \eqref{eq:E-tau-N-2-estimate} and discrete Burkholder-Davis-Gundy-type inequality, one can easily derive that for any $m \in \{0,1,,2,\cdots,M\}$,
\begin{equation}\label{eq:moment-bound-auxiliary}
\| Y_{t_m}^{M,N} \|_{L^p(\Omega,\dot{H}^3)} < \infty.
\end{equation}
Subsequently, by the triangle inequality, we have
\begin{equation}\label{XNT-XTMN}
\big\| \bar{X}^N(T) - \bar{X}_T^{M,N} \big\|_{L^2(\Omega,\dot{H})}
	\leq \big\| \bar{X}^N(T) - \bar{Y}_T^{M,N} \big\|_{L^2(\Omega,\dot{H})}
	+ \big\| \bar{Y}_T^{M,N} - \bar{X}_T^{M,N} \big\|_{L^2(\Omega,\dot{H})}.
\end{equation}
	The error term
	$\big\| \bar{X}^N(T) - \bar{Y}_T^{M,N} \big\|_{L^p(\Omega,\dot{H})}$
	can be further divided into three terms
	\begin{equation}\label{XNT-YTNM}
		\begin{split}
			\big\| & \bar{X}^N(T)  - \bar{Y}_T^{M,N}
                  \big\|_{L^p(\Omega,\dot{H})}
			=
			\Big\| ( E(T) P_N - E_{\tau,N}^M )  X_0
		\\& \quad	- \Big( \int_0^T E(T-s)P_N A  F(X^N(s)) \dd s
			  - \tau \sum_{j=1}^M E_{\tau,N}^{M-j+1} A F(X^N(t_j)) \Big)
			\Big\|_{L^p(\Omega,\dot{H})}
			\\& \leq
			\big\| (E(T) P_N  - E_{\tau,N}^M ) X_0
                            \big\|_{L^p(\Omega,\dot{H})}
			\\& \quad + \Big\| \int_0^T
			\big( E(T-s) P_N - E_{\tau,N}^{M - [s]} \big)
			A  F(X^N(s)) \dd s \Big\|_{L^p(\Omega,\dot{H})}
			\\& \quad + \Big\|  \int_0^T  E_{\tau,N}^{M - [s]} A
			\big( F(X^N(s)) - F(X^N(\lceil s \rceil)) \big) \dd s
			\Big\|_{L^p(\Omega,\dot{H})}
			\\&=:K_{11} + K_{12} + K_{13}.
		\end{split}
	\end{equation}
	By   (ii) of  Lemma \ref{lem;estimate-fully-approximation-operator} with $\beta=4$
	and Assumption \ref{assum:intial-value-data}, we deduce
	\begin{equation}
		K_{11} \leq C \, \tau |X_0|_4 \leq C \, \tau.
	\end{equation}
	Concerning the term $K_{12}$,
	by use of  (vi) and (iv) in  Lemma
	\ref{lem;estimate-fully-approximation-operator}, \eqref{eq:norm-algebra},
\eqref{eq:spatial-regularity-mild-Galerkin},
\eqref{eq:temporal-regularity-mild-Galerkin},
\eqref{eq:spatial-regularity-mild-F(XN)},
 we obtain
	\begin{equation}
		\begin{split}
			K_{12}  & \leq \Big\| \int_0^T
			\big( E(T-s) P_N- E_{\tau,N}^{M - [s]} \big)
			A P F(X^N(T)) \dd s   \Big\|_{L^p(\Omega,\dot{H})}
            \\ & \quad +  \int_0^T   \Big\|
			\big( E(T-s) P_N- E_{\tau,N}^{M - [s]} \big) A P
        ( F(X^N(s)) - F(X^N(T)) )   \Big\|_{L^p(\Omega,\dot{H})}  \dd s
			\\ & \leq C \, \tau \,  \| F(X^N(T))
                     \|_{L^{p}(\Omega,\dot{H}^2)}
			 + C \, \tau \, \int_0^T ( T - s )^{-1}
             \|P( F(X^N(s))- F(X^N(T)))\|_{L^p(\Omega,\dot{H}^2)} \dd s
            \\ &  \leq C \, \tau +  C \, \tau \,
            \Big( 1 + \sup_{r\in [0,t]} \|X^N(r)\|_{L^{4p}(\Omega,\dot{H}^2)}^2 \Big) \cdot
            \int_0^T ( T - s )^{-1}
            \|X^N(s)- X^N(T)\|_{L^{2p}(\Omega,\dot{H}^2)} \dd s
            \\ & \leq C \, \tau +  C \, \tau \,
            \int_0^T ( T - s )^{-1} ( T - s )^{\frac 14} \dd s
            \\ & \leq   C \, \tau.
		\end{split}
	\end{equation}
	To handle $K_{13}$, we decompose it into four terms with the aid of the Taylor expansion and the mild form of $X^N(t)$,
	\begin{equation}
		\begin{split}
			K_{13} & \leq
			\Big\|  \int_0^T  E_{\tau,N}^{M - [s]} A \Big( F'(X^N(s)) ( E(\lceil s \rceil -s) - I ) X^N(s) \Big) \dd s
			\Big\|_{L^p(\Omega,\dot{H})}
			\\ & \quad +
			\Big\|  \int_0^T  E_{\tau,N}^{M - [s]} A  \Big( F'(X^N(s)) \int_s^{\lceil s \rceil} \!\!\!
			E(\lceil s \rceil -r) P_N A  F(X^N(r)) \dd r \Big) \dd s
			\Big\|_{L^p(\Omega,\dot{H})}
			\\ & \quad +
			\Big\|  \int_0^T  E_{\tau,N}^{M - [s]} A  \Big( F'(X^N(s)) \int_s^{\lceil s \rceil}
			E(\lceil s \rceil -r) P_N  \dd W(r) \Big) \dd s
			\Big\|_{L^p(\Omega,\dot{H})}
			\\ & \quad +
			\Big\|  \int_0^T  E_{\tau,N}^{M - [s]} A  \Big( \int_0^1
		F'' \big(X^N(s)+\lambda( X^N (\lceil s \rceil) - X^N (s)) \big)
		\\& \qquad \qquad \qquad \big( X^N (\lceil s \rceil) - X^N (s),
			X^N (\lceil s \rceil) - X^N (s) \big)
			( 1 - \lambda)  \dd \lambda
			\Big) \dd s
			\Big\|_{L^p(\Omega,\dot{H})}
			\\&=: K_{131} + K_{132} + K_{133} +K_{134}.
		\end{split}
	\end{equation}
	The  smoothness  of $E_{\tau,N}^m$ in \eqref{eq:E-tau-N-estimate},
	\eqref{II-spatial-temporal-S(t)},
	\eqref{eq:F'--eta} and the regularity of $X^N(t)$ lead to
	\begin{equation}
		\begin{split}
			K_{131} & =
			\Big\|  \int_0^T  E_{\tau,N}^{M - [s]}
			A^{\frac32}  A^{-\frac12}
	\Big( F'(X^N(s)) ( E(\lceil s \rceil -s) - I ) X^N(s) \Big) \dd s
			\Big\|_{L^p(\Omega,\dot{H})}
			\\& \leq  C \! \int_0^T \!\!\!
			 ( T - \lfloor s \rfloor )^{ -\frac34}
			\left\|
			\big( 1 + | X^N(s) |_2^2 \big)
			\big| ( E(\lceil s \rceil -s) - I ) X^N(s) \big|_{-1}
			\right\|_{L^p(\Omega,\R)} \!\!\! \dd s
			\\& \leq C \, \tau.
		\end{split}
	\end{equation}
	Following similar approach as above and
	utilizing \eqref{eq:spatial-regularity-mild-F(XN)} yield
	\begin{equation}
		\begin{split}
			K_{132} &
			\leq  C \int_0^T ( T - \lfloor s \rfloor )^{ -\frac34}
			\Big\|
			\big( 1 + | X^N(s) |_2^2 \big)
			\int_s^{\lceil s \rceil}
		\big|  E(\lceil s \rceil -r) P_N A  F(X^N(r)) \big|_{-1} \dd r
			\Big\|_{L^p(\Omega,\R)} \dd s
			\\& \leq  C \, \tau
			\int_0^T ( T - \lfloor s \rfloor )^{ - \frac34} \dd s
			\sup_{r \in [0,T]} \| F(X^N(r)) \|_{L^{2p}(\Omega,\dot{H}^2)}
			\\& \leq C \, \tau.
		\end{split}
	\end{equation}
	From stochastic Fubini theorem, the Burkholder-Davis-Gundy-type inequality and  H\"older's inequality, it follows that
	\begin{equation}
		\begin{split}
			K_{133} & =
			\Big\|  \sum_{j=1}^M  \int_{t_{j-1}}^{t_j} \!
			\int_{t_{j-1}}^{t_j} \!\!
			\chi_{ [s,t_j) } (r)
			E_{\tau,N}^{M - [s]}
			A   F'(X^N(s)) E(\lceil s \rceil -r) P_N  \dd W(r) \dd s
			\Big\|_{L^p(\Omega,\dot{H})}
			\\&
			 =
			\Big\|  \sum_{j=1}^M  \int_{t_{j-1}}^{t_j} \!
			\int_{t_{j-1}}^{t_j}  \!\!
			\chi_{ [s,t_j) } (r)
			E_{\tau,N}^{M - [s]}
			A   F'(X^N(s)) E(\lceil s \rceil -r) P_N \dd s \dd W(r)
			\Big\|_{L^p(\Omega,\dot{H})}
			\\& \leq  \left(  \sum_{j=1}^M  \int_{t_{j-1}}^{t_j}
			\Big\|   \int_{t_{j-1}}^{t_j} \chi_{ [s,t_j) } (r)
			E_{\tau,N}^{M - [s]}
			A   F'(X^N(s)) E(\lceil s \rceil -r)  \dd s \Big\|_{L^p(\Omega,\mathcal{L}_2^0)}^2
			\, \dd r    \right)^{\frac12}
			\\& \leq C \, \tau^{\frac12}
			\left(  \sum_{j=1}^M  \int_{t_{j-1}}^{t_j} \!
			\int_{t_{j-1}}^{t_j} \!\!
			\Big\|   E_{\tau,N}^{M - [s]}
			A^{\frac12}  A^{\frac12}
			 F'(X^N(s)) E(\lceil s \rceil -r)  \Big\|_{L^p(\Omega,\mathcal{L}_2^0)}^2
			\dd s \, \dd r    \right)^{\frac12}
			\\& \leq C \, \tau^{\frac12}
			\left(  \sum_{j=1}^M  \int_{t_{j-1}}^{t_j} \!
			 \int_{t_{j-1}}^{t_j} \!\!
			( T - \lfloor s \rfloor )^{-\frac12}
			\big( 1 + \|X^N(s)\|_{L^{2p}(\Omega,\dot{H}^2)}^4 \big)
            \big\| A^{\frac12} Q^{\frac12} \big\|_{\mathcal{L}_2}^2
			\dd s \, \dd r    \right)^{\frac12}
			\\& \leq C \, \tau,
		\end{split}
	\end{equation}
where $\chi_{[0,t]}$ denotes the indicate function on $[0,t]$.
Additionally, \eqref{eq:F'-1}, \eqref{eq:E-tau-N-estimate} and the stability of $E(\lceil s \rceil -r)$ were used in the third inequality and in the last inequality we used \eqref{eq:ass-AQ-condition} and \eqref{eq:spatial-regularity-mild-Galerkin}.
Owing to H\"{o}lder's inequality, the Sobolev embedding inequality
$\dot{H}^{\delta} \subset V$ for $ \frac32 < \delta < 2$ and
Proposition \ref{prop:regularity-spatial-spectral},
we obtain
\begin{equation}
\begin{split}
K_{134} & = \Big\|  \int_0^T  E_{\tau,N}^{M - [s]} A
  \Big( \int_0^1
	F'' \big(X^N(s)+\lambda( X^N (\lceil s \rceil) - X^N (s)) \big)
	\\& \qquad \qquad \qquad \big( X^N (\lceil s \rceil) - X^N (s),
	X^N (\lceil s \rceil) - X^N (s) \big)
		( 1 - \lambda)  \dd \lambda \Big) \dd s
			\Big\|_{L^p(\Omega,\dot{H})}
	\\& \leq C  \int_0^T
		( T - \lfloor s \rfloor )^{ -\frac{2+\delta}4}
		\| X^N (\lceil s \rceil)  -  X^N (s)
             \|_{L^{4p}(\Omega,\dot{H})}^2
		\big( 1 +  \sup_{s \in [0,T]}
		 \| X^N (s) \|_{L^{2p}(\Omega,V)} \big) \dd s
	\\& \leq C \, \tau.
\end{split}
\end{equation}
Gathering the above estimates and \eqref{XNT-YTNM} together gives
\begin{equation}\label{eq:K2-first-term}
\big\| \bar{X}^N(T) - \bar{Y}_T^{M,N}
   \big\|_{L^p(\Omega,\dot{H})}
		\leq C \, \tau.
\end{equation}
Finally, we turn to remaining  error term
$\big\| \bar{Y}_T^{M,N} - \bar{X}_T^{M,N} \big\|_{L^2(\Omega,\dot{H})}$  in  \eqref{XNT-XTMN}.
Denoting
	$e_t^{M,N}=\bar{Y}_t^{M,N} - \bar{X}_t^{M,N}$,
	we have
	\begin{equation}
		e_{t_m}^{M,N} - e_{t_{m-1}}^{M,N}
		+ \tau A^2 e_{t_m}^{M,N}
		=  \tau P_N A  F(X_{t_m}^{M,N})  -  \tau P_N A F(X^N(t_m)).
	\end{equation}
	Multiplying both sides by $A^{-1} e_{t_m}^{M,N}$ shows
	\begin{equation}
		\begin{split}
			\langle e_{t_m}^{M,N} - & e_{t_{m-1}}^{M,N},
			A^{-1}  e_{t_m}^{M,N} \rangle
			+ \tau \langle A^2 e_{t_m}^{M,N}, A^{-1} e_{t_m}^{M,N} \rangle
			\\& = \tau \langle
			F( \bar{X}_{t_m}^{M,N} + \mathcal{O}_{t_m}^{M,N} )
			- F( \bar{Y}_{t_m}^{M,N} + \mathcal{O}_{t_m}^{M,N} ),
			e_{t_m}^{M,N} \rangle
			\\& \quad +   \tau \langle
			F( \bar{Y}_{t_m}^{M,N} + \mathcal{O}_{t_m}^{M,N} )
			-  F( \bar{X}^N(t_m) + \mathcal{O}_{t_m}^{M,N} ),
			e_{t_m}^{M,N} \rangle
			\\& \quad +   \tau \langle
			F( \bar{X}^N(t_m) + \mathcal{O}_{t_m}^{M,N} )
			-  F( \bar{X}^N(t_m) + \mathcal{O}_{t_m}^{N} ),
			e_{t_m}^{M,N} \rangle.
		\end{split}
	\end{equation}
Following similar approach
in \eqref{eq:space-frechet-estimate} and using the inequality $\langle e_{t_m}^{M,N} -  e_{t_{m-1}}^{M,N},
A^{-1}  e_{t_m}^{M,N} \rangle
\geq  \tfrac12 \big( |e_{t_m}^{M,N}|_{-1}^2 -
|e_{t_{m-1}}^{M,N}|_{-1}^2 \big)$
and the monotonicity of $F$ in \eqref{eq:one-side-condition},
we further obtain
\begin{equation}
\begin{split}
\tfrac12 \big( |e_{t_m}^{M,N}|_{-1}^2   -
	|e_{t_{m-1}}^{M,N}|_{-1}^2 \big)
	+ \tau | e_{t_m}^{M,N} |_1^2
	& \leq  \tfrac34 \tau | e_{t_m}^{M,N} |_1^2
	+ \tfrac98 \tau | e_{t_m}^{M,N} |_{-1}^2
	\\& \quad + C \, \tau \big\|
      F( \bar{Y}_{t_m}^{M,N} + \mathcal{O}_{t_m}^{M,N} )
		-  F( \bar{X}^N(t_m) + \mathcal{O}_{t_m}^{M,N} ) \big\|^2
	\\& \quad + C \, \tau
      \big| F( \bar{X}^N(t_m) + \mathcal{O}_{t_m}^{M,N} )
		-  F( \bar{X}^N(t_m) + \mathcal{O}_{t_m}^{N} ) \big|_{-1}^2.
\end{split}
\end{equation}
By iteration  in $m$ and  Gronwall's inequality, we obtain
	\begin{equation}
		\begin{split}
			| e_{T}^{M,N} |_{-1}^2
			+  \tau \sum_{j=1}^M | e_{t_j}^{M,N} |_1^2
			& \leq C \, \tau \sum_{j=1}^M \Big(
			\big\| F( \bar{Y}_{t_j}^{M,N} + \mathcal{O}_{t_j}^{M,N} )
		-  F( \bar{X}^N(t_j) + \mathcal{O}_{t_j}^{M,N} ) \big\|^2 \Big)
			\\& \quad
			+ C \, \tau \sum_{j=1}^M \Big(
         \big| F( \bar{X}^N(t_j) + \mathcal{O}_{t_j}^{M,N} )
			-  F( \bar{X}^N(t_j) + \mathcal{O}_{t_j}^{N} ) \big|_{-1}^2
			\Big).
		\end{split}
	\end{equation}
It is worth  mentioning that \eqref{eq:K2-first-term} also holds for arbitrary $t_j,j \in \{1,\cdots,M\}$ by repeating the same argument from \eqref{XNT-YTNM} to \eqref{eq:K2-first-term}.
Then, employing  \eqref{eq:local-condition}, \eqref{eq:spatial-regularity-mild-Galerkin}, \eqref{eq:F'--eta}, \eqref{eq:moment-bound-auxiliary} and Lemma \ref{lem:ON-OMN} results in
	\begin{equation}\label{eq:int-full-estimate}
		\begin{split}
			\Big\|  \tau  \sum_{j=1}^M | e_{t_j}^{M,N} |_1^2
			\Big\|_{L^p(\Omega,\R)}
			& \leq C ~\tau \sum_{j=1}^M \! \Big\|
			\big\| \bar{Y}_{t_j}^{M,N}\! -\! \bar{X}^N(t_j) \big\|^2
			\big( 1 \! + \! \|\bar{Y}_{t_j}^{M,N}\|_V^4
			+  \|\bar{X}^N(t_j)\|_V^4
			+  \|\mathcal{O}_{t_j}^{M,N}\|_V^4 \big)
			\Big\|_{L^p(\Omega,\R)}
			\\& \quad
			+ C ~\tau \sum_{j=1}^M \!\Big\|
			\big| \mathcal{O}_{t_j}^{M,N}
			-  \mathcal{O}_{t_j}^{N} \big|_{-1}^2
			\big( 1 \!+\! |\bar{X}^N(t_j)|_2^4
			+  |\mathcal{O}_{t_j}^{M,N}|_2^4
			+  |\mathcal{O}_{t_j}^{N}|_2^4 \big)
			\Big\|_{L^p(\Omega,\R)}
			\\& \leq C \, \tau^{2}.
		\end{split}
	\end{equation}
	Furthermore,  since
\begin{equation}
e_T^{M,N} = \bar{Y}_T^{M,N} - \bar{X}_T^{M,N}
 = \tau \sum_{j=1}^M E_{\tau,N}^{M-j+1} A
			\big(F(X_{t_j}^{M,N}) - F(X^N(t_j)) \big),
\end{equation}
we split $\| e_T^{M,N} \|_{L^p(\Omega,\dot{H})}$ into three parts
	\begin{equation}
		\begin{split}
			\|  e_T^{M,N}  \|_{L^p(\Omega,\dot{H})}
		& = \tau \Big\|
			\sum_{j=1}^M E_{\tau,N}^{M-j+1} A
			\big( F(X^N(t_j)) - F(X_{t_j}^{M,N}) \big)
			\Big\|_{L^p(\Omega,\dot{H})}
			\\& \leq  \tau \sum_{j=1}^M \Big\|
			E_{\tau,N}^{M-j+1}  A
			\big( F( \bar{X}^N(t_j) + \mathcal{O}_{t_j}^N)
			- F(\bar{Y}_{t_j}^{M,N} + \mathcal{O}_{t_j}^N ) \big)
			\Big\|_{L^p(\Omega,\dot{H})}
			\\& \quad + \tau \sum_{j=1}^M \Big\|
			E_{\tau,N}^{M-j+1} A
			\big(  F(\bar{Y}_{t_j}^{M,N} + \mathcal{O}_{t_j}^N )
			- F( \bar{Y}_{t_j}^{M,N} + \mathcal{O}_{t_j}^{M,N} ) \big)
			\Big\|_{L^p(\Omega,\dot{H})}
			\\& \quad + \tau \Big\| \sum_{j=1}^M
			E_{\tau,N}^{M-j+1}  A
			\big(  F( \bar{Y}_{t_j}^{M,N} + \mathcal{O}_{t_j}^{M,N} )
			- F( \bar{X}_{t_j}^{M,N} + \mathcal{O}_{t_j}^{M,N} ) \big)
			\Big\|_{L^p(\Omega,\dot{H})}
			\\&=:   Err_1 + Err_2 + Err_3.
		\end{split}
	\end{equation}
	Taking \eqref{eq:E-tau-N-estimate}, \eqref{eq:local-condition},
	\eqref{eq:K2-first-term}, H\"{o}lder's inequality and moment bounds of $Y_{t_m}^{M,N}$ and $X^N(t)$ into account,
	we arrive at
	\begin{equation}
		\begin{split}
			Err_1
			& \leq C \, \tau \sum_{j=1}^M t_{M-j+1}^{-\frac12}
			\|\bar{X}^N(t_j) - \bar{Y}_{t_j}^{M,N}\|_{L^{2p}(\Omega,\dot{H})}
			\\& \qquad  \qquad
			\big( 1+ \|\bar{X}^N(t_j)\|_{L^{4p}(\Omega,V)}^2
			+ \|\bar{Y}_{t_j}^{M,N}\|_{L^{4p}(\Omega,V)}^2
			+ \|\mathcal{O}_{t_j}^N\|_{L^{4p}(\Omega,V)}^2 \big)
			\\& \leq C \, \tau \, \sum_{j=1}^M t_{M-j+1}^{-\frac12} \,\tau
		 \leq C \, \tau.
		\end{split}
	\end{equation}
	Analogously  to  the   above estimate but with \eqref{eq:F'--eta} instead,
	we derive
	\begin{equation}
		\begin{split}
			Err_2
			& \leq C \tau \sum_{j=1}^M   t_{M-j+1}^{-\frac34}
			\|\mathcal{O}_{t_j}^N - \mathcal{O}_{t_j}^{M,N}
			\|_{L^{2p}(\Omega,\dot{H}^{-1})}
			\\& \quad
			\big( 1+
              \|\bar{Y}_{t_j}^{M,N}\|_{L^{4p}(\Omega,\dot{H}^2)}^2
			+ \|\mathcal{O}_{t_j}^N \|_{L^{4p}(\Omega,\dot{H}^2)}^2
			+ \|\mathcal{O}_{t_j}^{M,N} \|_{L^{4p}(\Omega,\dot{H}^2)}^2
                \big)
			\\& \leq
			C \, \tau.
		\end{split}
	\end{equation}
	At last, combining \eqref{eq:F'-1}, H\"{o}lder's inequality, \eqref{eq:int-full-estimate} and regularity of
	$Y_{t_m}^{M,N}$ and $X_{t_m}^{M,N}$ leads to
	\begin{equation}
		\begin{split}
			Err_3
			& \leq C
			\left\| \tau \sum_{j=1}^M   t_{M-j+1}^{-\frac14}
			\big| F( \bar{Y}_{t_j}^{M,N} + \mathcal{O}_{t_j}^{M,N} )
			- F( \bar{X}_{t_j}^{M,N} + \mathcal{O}_{t_j}^{M,N} ) \big|_1
			\right\|_{L^p(\Omega,\R)}
			\\& \leq C
			\left\|
			\tau \sum_{j=1}^M   t_{M-j+1}^{-\frac14}
			|e_{t_j}^{M,N}|_1
			\big( 1 + |\bar{Y}_{t_j}^{M,N}|_2^2
			+ |\bar{X}_{t_j}^{M,N}|_2^2
			+ |\mathcal{O}_{t_j}^{M,N}|_2^2   \big)
			\right\|_{L^p(\Omega,\R)}
			\\&\leq C
			\left\|
			\tau \sum_{j=1}^M
			|e_{t_j}^{M,N}|_1^2
			\right\|_{L^p(\Omega,\R)}^{\frac12}
			  \times
			\left\|
			\tau \sum_{j=1}^M   t_{M-j+1}^{-\frac12}
			\big( 1 + |\bar{Y}_{t_j}^{M,N}|_2^4
			+ |\bar{X}_{t_j}^{M,N}|_2^4
			+ |\mathcal{O}_{t_j}^{M,N}|_2^4  \big)
			\right\|_{L^p(\Omega,\R)}^{\frac12}
			\\& \leq C \, \tau.
		\end{split}
	\end{equation}
Combining  the above estimates together yields
\begin{equation}\label{time1}
\big\| \bar{X}^N(T) - \bar{X}_T^{M,N} \big\|_{L^2(\Omega,\dot{H})} \leq
C \, \tau,
\end{equation}
and thus $|K_1|\le C \, \tau$.
The estimate of $K_2$ relies on a second-order Taylor expansion
 and the triangle inequality:
	\begin{equation}
		\begin{split}
			|K_2| &
           = \Big| \E \big[ \Phi (\bar{X}^N(T) + \mathcal{O}_T^N) \big]
			- \E \big[ \Phi (\bar{X}^N(T) + \mathcal{O}_T^{M,N}) \big]
			\Big|
			\\& \leq
			\Big| \E \Big[ \Phi'(X^N(T))
			\big( \mathcal{O}_T^{M,N} -  \mathcal{O}_T^{N} \big)
                 \Big] \Big|
			\\&\quad
            + \Big| \E \Big[ \int_0^1 \Phi'' \big(  X^N(T) + \lambda
			( \mathcal{O}_T^{M,N} -  \mathcal{O}_T^{N} )   \big)
			\big( \mathcal{O}_T^{M,N} -  \mathcal{O}_T^{N},
			\mathcal{O}_T^{M,N} -  \mathcal{O}_T^{N}  \big)
			( 1 - \lambda )  \dd \lambda \Big]  \Big|
	\\& \leq \Big| \E \big[ \Phi'(X^N(T) )
	\big( \mathcal{O}_T^{M,N} -  \mathcal{O}_T^{N} \big) \big] \Big|
	+ C \, \E \big[ \|\mathcal{O}_T^{M,N} - \mathcal{O}_T^{N}\|^2 \big].
		\end{split}
	\end{equation}
Thanks to  Lemma \ref{lem:ON-OMN} with $\beta=0$, we have
\begin{equation}\label{time2}
\E \big[ \|\mathcal{O}_T^{M,N} -  \mathcal{O}_T^{N}\|^2 \big]
  \leq ( C \tau^{\frac 34} )^2 \leq C \, \tau^{\frac 32}.
\end{equation}
Then, we turn our attention to the first term,
\begin{equation}
\begin{split}
\Big| \E  \big[ \Phi'(X^N(T) )
\big( \mathcal{O}_T^{M,N} -  \mathcal{O}_T^{N} \big) \big] \Big|
&  = \Big| \E  \Big[ \Big \langle \int_0^T
    \big( E(T-s) P_N - E_{\tau,N}^{M-[s]} \big) \dd W(s),
   \Phi^{'} (X^N(T)) \Big \rangle \Big]  \Big|
 \\& = \Big| \E \int_0^T \big \langle E(T-s)P_N-E_{\tau,N}^{M-[s]},
	\mathcal{D}_s  \Phi'(X^N(T)) \big\rangle_{\mathcal{L}_2^0} \dd s \Big|
 \\ & \leq \E \int_0^T \big\| E(T-s)P_N-E_{\tau,N}^{M-[s]} \big\|_{\mathcal{L}_2^0} \| \Phi''(X^N(T)) \mathcal{D}_s X^N(T) \|_{\mathcal{L}_2^0} \dd s
 \\ & \leq C \int_0^T \big\| \big( E(T-s)P_N-E_{\tau,N}^{M-[s]} \big) A^{-\frac12} \big\|_{\mathcal{L}} \|A^{\frac12}Q^{\frac12}\|_{\mathcal{L}_2} \dd s
 \\ & \leq C \tau \int_0^T ( T-s )^{-\frac 34} \dd s
	\leq C \, \tau,
\end{split}
\end{equation}
where \eqref{eq:ass-AQ-condition}, the Malliavin integration by parts formula \eqref{Malliavin integration by parts},  \eqref{eq:malliavin-derivative} in Proposition	\ref{Estimate of Malliavin derivative of the solution}
and (iv) in Lemma \ref{lem;estimate-fully-approximation-operator} with $\mu = 1$ were used.
Therefore, we obtain $|K_1| \leq C\, \tau$ and $|K_2| \leq C\, \tau$.
The proof is thus complete.
\end{proof}

\begin{corollary}\label{coro:strong}
	As a by-product of the weak error analysis, one can easily obtain the rates of the strong error, for $N \in \N $ and $m \in \{ 1,2,\cdots,M \}$,
\begin{equation}
\begin{split}
\|X(t_m) - X_{t_m}^{M,N} \|_{L^2(\Omega,\dot{H})}
& \leq \|\bar{X}(t_m) - \bar{X}_{t_m}^{M,N}\|_{L^2(\Omega,\dot{H})}
+ \|\mathcal{O}_{t_m}-\mathcal{O}_{t_m}^{M,N}\|_{L^2(\Omega,\dot{H})}
\\ &
\leq \|\bar{X}(t_m) - \bar{X}^N(t_m)\|_{L^2(\Omega,\dot{H})}
 + \|\bar{X}^N(t_m) - \bar{X}_{t_m}^{M,N}\|_{L^2(\Omega,\dot{H})}
\\ &  \quad
+ \|\mathcal{O}_{t_m}-\mathcal{O}_{t_m}^{N}\|_{L^2(\Omega,\dot{H})}
+\|\mathcal{O}_{t_m}^{N}-\mathcal{O}_{t_m}^{M,N}\|_{L^2(\Omega,\dot{H})}
\\ &  \leq C ( \lambda_N^{-2} + \tau +
     \lambda_N^{-\frac32} + \tau^{\frac34})
\\ & \leq C ( \lambda_N^{-\frac32} + \tau^{\frac34}),
\end{split}
\end{equation}
where the third inequality follows from \eqref{space1}, \eqref{time1}, \eqref{space2} and \eqref{time2} with $t_m$ instead of $T$, successively.
The strong error estimates here, the same as that in \cite{cui2021strongCHC,qi2021existence,qi2020error}, coincide with the spatial regularity of $X(t)$, and thus are optimal.
\end{corollary}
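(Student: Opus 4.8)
The plan is to obtain the strong estimate essentially for free, by reassembling the strong-type bounds that were already produced in the course of the weak analysis. First I would reinstate the stochastic convolution exactly as in the proofs of Theorems \ref{thm:weak-space} and \ref{th:order-time}: with $\bar{X}(t) = X(t) - \mathcal{O}_t$, $\bar{X}^N(t) = X^N(t) - \mathcal{O}_t^N$ and $\bar{X}_{t_m}^{M,N} = X_{t_m}^{M,N} - \mathcal{O}_{t_m}^{M,N}$, two applications of the triangle inequality reduce the task to bounding the four quantities $\|\bar{X}(t_m) - \bar{X}^N(t_m)\|$, $\|\bar{X}^N(t_m) - \bar{X}_{t_m}^{M,N}\|$, $\|\mathcal{O}_{t_m} - \mathcal{O}_{t_m}^N\|$ and $\|\mathcal{O}_{t_m}^N - \mathcal{O}_{t_m}^{M,N}\|$, all measured in $L^2(\Omega,\dot{H})$.

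Then I would read off the four bounds one by one. The spatial drift error is $\|\bar{X}(t_m) - \bar{X}^N(t_m)\|_{L^2(\Omega,\dot{H})} \leq C\lambda_N^{-2}$, which is \eqref{space1} --- the proof of Theorem \ref{thm:weak-space} never uses $t = T$ in any essential way, so it is valid at every node $t_m$. The temporal drift error is $\|\bar{X}^N(t_m) - \bar{X}_{t_m}^{M,N}\|_{L^2(\Omega,\dot{H})} \leq C\tau$, which is \eqref{time1}; as noted in the proof of Theorem \ref{th:order-time} (the remark preceding \eqref{eq:int-full-estimate}), the whole argument from the decomposition of $\|\bar{X}^N(T) - \bar{Y}_T^{M,N}\|$ onwards carries over to an arbitrary $t_m$, since the semigroup and $E_{\tau,N}^m$ smoothing estimates and the uniform moment bounds are all insensitive to the choice of endpoint. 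For the stochastic convolution, $\mathcal{O}_{t_m} - \mathcal{O}_{t_m}^N = (I - P_N)\mathcal{O}_{t_m}$, so \eqref{estimate:P^N-I} with $\beta = 3$ together with the $\dot{H}^3$-moment bound of Lemma \ref{lem:stochastic-convolution} gives $\|\mathcal{O}_{t_m} - \mathcal{O}_{t_m}^N\|_{L^2(\Omega,\dot{H})} \leq C\lambda_N^{-\frac32}$ (the square root of \eqref{space2}), and Lemma \ref{lem:ON-OMN} with $\beta = 0$ gives $\|\mathcal{O}_{t_m}^N - \mathcal{O}_{t_m}^{M,N}\|_{L^2(\Omega,\dot{H})} \leq C\tau^{\frac34}$ (the square root of \eqref{time2}).

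Finally I would add the four contributions, obtaining $\|X(t_m) - X_{t_m}^{M,N}\|_{L^2(\Omega,\dot{H})} \leq C(\lambda_N^{-2} + \tau + \lambda_N^{-\frac32} + \tau^{\frac34})$, and then drop the subdominant terms: since $\lambda_N^{-2} \leq \lambda_N^{-\frac32}$ once $\lambda_N \geq 1$ (the finitely many smaller $\lambda_N$ being absorbed into $C$) and $\tau \leq \tau^{\frac34}$ for $\tau \leq 1$, the bound collapses to $C(\lambda_N^{-\frac32} + \tau^{\frac34})$, uniformly in $N \in \N$ and $m \in \{1,\dots,M\}$, which is the claim.

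There is no genuine obstacle here --- the corollary really is a by-product of the preceding analysis. The only point that needs a moment's care is the observation that \eqref{space1} and \eqref{time1} were in fact established at an arbitrary time level and not merely at $T$; this is immediate for the spatial estimate and follows for the temporal one from the remark in the proof of Theorem \ref{th:order-time}. It is also worth recording, as the statement does, that the limiting exponents $\lambda_N^{-\frac32}$ and $\tau^{\frac34}$ are dictated entirely by the two stochastic convolution terms and match the $\dot{H}^3$ spatial and temporal regularity of $X$, so the rate is optimal; in particular it lies strictly below the weak rate $\lambda_N^{-2} + \tau$ of \eqref{intro:weak}, which is exactly what this corollary is meant to illustrate.
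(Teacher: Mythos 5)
Your proposal is correct and follows essentially the same route as the paper: two triangle-inequality decompositions, then the bounds \eqref{space1}, \eqref{time1}, \eqref{space2}, \eqref{time2} applied at $t_m$ in place of $T$ (the validity at arbitrary nodes being exactly the point noted before \eqref{eq:int-full-estimate}), followed by absorbing the subdominant terms $\lambda_N^{-2}$ and $\tau$.
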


\begin{remark}
It is worthwhile to mention that the obtained weak convergence rate in time (i.e., $\mathcal{O}(\tau)$) is optimal for the Euler--type method applying to stochastic differential equation.
\end{remark}

\section*{Acknowledgements}

M. Cai and S. Gan are supported by NSF of China (No. 11971488).
M. Cai is supported by the China Scholarship Council.
Y. Hu is supported  by an NSERC discovery grant.
We are very grateful to the referees for the interesting and constructive comments and suggestions.


\end{document}